\documentclass[10pt,a4paper]{amsart}

\usepackage{amstext,amssymb,amsopn,amsthm,mathrsfs,amsfonts,amsmath,amsxtra}

\usepackage{dsfont,multirow,comment,enumerate,bm,float}

\usepackage{graphicx,color}

\setlength{\textheight}{24cm}
\setlength{\textwidth}{16cm}
\setlength{\oddsidemargin}{0cm}
\setlength{\evensidemargin}{0cm}
\setlength{\topmargin}{0cm}
\allowdisplaybreaks

\newtheorem{theorem}{Theorem}[section]
\newtheorem{corollary}[theorem]{Corollary}
\newtheorem{proposition}[theorem]{Proposition}

\newtheorem{lemma}[theorem]{Lemma}
\newtheorem{remark}[theorem]{Remark}

\DeclareMathOperator{\spann}{span}
\DeclareMathOperator{\domain}{Dom}

\DeclareMathOperator{\id}{Id}

\def\a{\alpha}
\def\b{\beta}
\def\z{\lambda}

%%%%%%%%%%%%%%%%%%%%%%%%%%%%%%%%%%%%%%%%%%%%%%%%%%%%%%%%%%%%%%%%%%%%%%%%%%%%%%%%%%%%%%%%%%%
\title[Fourier-Dini heat kernel estimates]
	{On sharp heat kernel estimates \\
	in the context of Fourier-Dini expansions}
%%%%%%%%%%%%%%%%%%%%%%%%%%%%%%%%%%%%%%%%%%%%%%%%%%%%%%%%%%%%%%%%%%%%%%%%%%%%%%%%%%%%%%%%%%%

\author[B.\ Langowski]{Bartosz Langowski}

\address{Bartosz Langowski, \newline
Department of Mathematics and Physical Sciences, \newline
Franciscan University of Steubenville, \newline
1235 University Blvd., Steubenville, OH 43952, USA
}
\email{blangowski@franciscan.edu}

\author[A.\ Nowak]{Adam Nowak}

\address{Adam Nowak, \newline
			Institute of Mathematics, \newline
      Polish Academy of Sciences, \newline
      \'Sniadeckich 8,
      00--656 Warszawa, Poland 
}
\email{anowak@impan.pl}

\begin{document}

%\date{\today}

\begin{abstract}
We prove sharp estimates of the heat kernel associated with Fourier-Dini expansions on $(0,1)$ equipped with Lebesgue
measure and the Neumann condition imposed on the right endpoint. Then we give several applications of this result including
sharp bounds for the corresponding Poisson and potential kernels, sharp mapping properties of the maximal heat semigroup
and potential operators and boundary convergence of the Fourier-Dini semigroup.
\end{abstract}

\maketitle
\thispagestyle{empty}

\footnotetext{
\emph{2020 Mathematics Subject Classification:} primary 42C05; secondary 35K08.\\
%42C05 - Orthogonal functions and polynomials, general theory of nontrigonometric harmonic analysis
%35K08 (2010-now) Heat kernel
\emph{Key words and phrases:} 
Fourier-Dini system, heat semigroup, heat kernel, sharp estimate,
maximal operator, Poisson kernel, potential kernel, potential operator.
}

%%%%%%%%%%%%%%%%%%%%%%%%%%%%%%%%%%%%%%%%%%%%%%%%%%%%%%%%%%%%%%%%%%%%%%%%%%%%%%%%%%%%%%%%%%%%%
\section{Introduction} \label{sec:intro}
%%%%%%%%%%%%%%%%%%%%%%%%%%%%%%%%%%%%%%%%%%%%%%%%%%%%%%%%%%%%%%%%%%%%%%%%%%%%%%%%%%%%%%%%%%%%%

Discrete Fourier-Bessel and Fourier-Dini expansions on the interval $(0,1)$ have been present in the literature since
the 19th century, cf.\ \cite[Chap.\,XVIII]{watson}, and they are of great importance in analysis and applications.
Roughly, both types of expansions form in fact a single family in which Fourier-Bessel corresponds to the Dirichlet
boundary condition at the right endpoint of $(0,1)$, while Fourier-Dini to all the remaining Robin boundary conditions
including the Neumann condition.

Analysis related to Fourier-Bessel expansions was a subject of numerous papers, see e.g.\ our recent work \cite{LN}
and references given there. In particular, estimates for the corresponding heat kernel were studied in \cite{NR1,NR2},
and finally sharp bounds were obtained in \cite{MSZ}. The result of \cite{MSZ} also follows from \cite{NR2} and the recent
sharp estimates of the Jacobi heat kernel obtained gradually in \cite{NSS1,NSS2,NSS3}.
On the other hand, Fourier-Dini expansions have more complicated structure and their analysis is far less developed.
The present paper appears to be the first that treats the issues indicated in the abstract in the
framework of Fourier-Dini expansions.

We work in the setting of Fourier-Dini expansions on $(0,1)$ equipped with Lebesgue measure.
Our main results pertain to the situation where the Neumann
condition is imposed on the right endpoint in the sense that all functions of the Fourier-Dini
system satisfy this condition. The central outcome of the paper, Theorem \ref{thm:main}, provides sharp global estimates of
the Fourier-Dini heat kernel viz.\ the integral kernel of the Fourier-Dini semigroup. This kernel is
given only implicitly, by a heavily oscillating series involving Bessel functions and zeros of some related functions,
and no direct insight into its behavior seems to be possible. To prove the short-time bounds, which are the essence of the whole
result, we use an indirect method having its roots in \cite{NR2}. This method relies on establishing a connection
with a situation of expansions based on Jacobi polynomials and then transferring known sharp bounds for the
related Jacobi heat kernel.
As an auxiliary result, we also show that \cite{NR2} together with the present paper completely exhaust applicability
of the method within the general framework of Fourier-Bessel/Fourier-Dini expansions.

Our principal motivation for investigating the Fourier-Dini heat kernel comes from an interest in harmonic
analysis related to Fourier-Dini expansions. The sharp result we prove has far reaching consequences and we point out some
of them as applications. These include sharp estimates for the related Poisson kernels (Theorem \ref{thm:poisson})
and potential kernels (Theorem \ref{thm:potk}), and the latter lead to a complete characterization of $L^p-L^q$
mapping properties of Fourier-Dini potential operators (Theorem \ref{thm:pot}).
Further consequences pertain to (optimal) $L^p$ mapping properties of the Fourier-Dini semigroup maximal operators
(Theorem \ref{thm:max}) and hence boundary a.e.\ convergence of the semigroup (Corollary \ref{cor:conv}).

The paper is organized as follows. In Section \ref{sec:pre} we introduce basic notation and gather various
definitions, facts and results related to Bessel functions and Fourier-Dini systems.
In Section \ref{sec:main} we state the main result, Theorem \ref{thm:main}, and give its proof.
We also discuss the issue of further inapplicability of the proof method within the framework of Fourier-Dini expansions.
This actually requires proving several auxiliary results concerning domains of certain unbounded operators,
which are postponed to the last Section \ref{sec:dom}.
Finally, in Section \ref{sec:app} we present in detail all the announced applications of the main result.

%\subsection*{Acknowledgment}

%%%%%%%%%%%%%%%%%%%%%%%%%%%%%%%%%%%%%%%%%%%%%%%%%%%%%%%%%%%%%%%%%%%%%%%%%%%%%%%%%%%%%%%%%%%%%
\section{Preliminaries} \label{sec:pre}
%%%%%%%%%%%%%%%%%%%%%%%%%%%%%%%%%%%%%%%%%%%%%%%%%%%%%%%%%%%%%%%%%%%%%%%%%%%%%%%%%%%%%%%%%%%%%

In this section we first briefly describe the general notation used in the paper.
Then we present for further reference various facts and formulas concerning Bessel functions.
Finally, we give basic information on Fourier-Dini systems and some related objects.

%%%%%%%%%%%%%%%%%%%%%%%%%%%%%%%%%%%%%%%%%%%%%%%%%%%%%%%%%%%%%%%%%%%%%%%%%%%%%%%%%%%%%%%%%%%%%
\subsection{Notation}

Throughout this paper we use fairly standard notation.
The minimum of two quantities will be indicated by $\wedge$.
By $\langle \cdot, \cdot \rangle$ we denote the usual inner product in $L^2((0,1),dx)$.
All $L^p$ spaces in this paper are over the interval $(0,1)$ equipped with Lebesgue measure. 

For a $\textsf{condition}$ that can have logical value true or false we shall denote
$$
\chi_{\{\textsf{condition}\}} :=
	\begin{cases}
	1, & \textrm{if} \;\, \textsf{condition} \;\, \textrm{is true}, \\
	0, & \textrm{if} \;\, \textsf{condition} \;\, \textrm{is false}.
	\end{cases}
$$

We write $X \lesssim Y$ to indicate that $X \le C Y$ with a positive constant $C$ independent of significant quantities.
We shall write $X \simeq Y$ when simultaneously $X \lesssim Y$ and $Y \lesssim X$.

%%%%%%%%%%%%%%%%%%%%%%%%%%%%%%%%%%%%%%%%%%%%%%%%%%%%%%%%%%%%%%%%%%%%%%%%%%%%%%%%%%%%%%%%%%%%%
\subsection{Facts and formulas concerning the Bessel functions $J_{\nu}$ and $I_{\nu}$} \label{sec:Bes}
For the material presented in this section we refer to Watson's monograph \cite{watson}; for an easy
online access to many facts and formulas related to Bessel functions, see \cite[Chap.\,10]{handbook}.

Let $J_{\nu}$ be the Bessel function of the first kind and order $\nu$ and let
$I_{\nu}$ be the modified Bessel function of the first kind and order $\nu$. In this paper, for our purpose,
we shall essentially always consider $J_{\nu}$ and $I_{\nu}$ as functions on the positive half-line.
Further, throughout the paper we assume that the parameter $\nu > -1$.
These restrictions could sometimes be weakened or released, but we will not go into details.

For further reference, we list some useful identities:
\begin{align}
x J'_{\nu}(x) - \nu J_{\nu}(x) & = -x J_{\nu+1}(x), & x I'_{\nu}(x) - \nu I_{\nu}(x) & = x I_{\nu+1}(x),
\label{id3}\\
\big[ x^{-\nu} J_{\nu}(x) \big]' & = -x^{-\nu} J_{\nu+1}(x), &  \big[ x^{-\nu} I_{\nu}(x)\big]' & = x^{-\nu} I_{\nu+1}(x). \label{id5}
\end{align}
We also note the asymptotics
\begin{align} \label{as1}
J_{\nu}(x) & = \frac{x^{\nu}}{2^{\nu}\Gamma(\nu+1)} + \mathcal{O}\big(x^{\nu+2}\big), &
I_{\nu}(x) & = \frac{x^{\nu}}{2^{\nu}\Gamma(\nu+1)} + \mathcal{O}\big(x^{\nu+2}\big), & x \to 0^+, \\
\label{as2}
J_{\nu}(x) & = \mathcal{O}\big(x^{-1/2}\big), &
I_{\nu}(x) & = \frac{e^x}{\sqrt{2\pi x}}\Big( 1 + \mathcal{O}\big( 1/x \big) \Big), & x \to \infty,
\end{align}
and the special cases
\begin{equation} \label{expl}
J_{-1/2}(x) = \sqrt{\frac{2}{\pi x}} \cos x, \qquad J_{1/2}(x) = \sqrt{\frac{2}{\pi x}} \sin x.
\end{equation}
In general, $J_{\nu}$ expresses directly via elementary functions if and only if $\nu$ is an odd half-integer.
Both $J_{\nu}$ and $I_{\nu}$ are smooth on $(0,\infty)$ and $I_{\nu}$ is strictly positive there.

For a parameter $H \in \mathbb{R}$ we consider the auxiliary functions
$$
J_{\nu,H}(x) := x J'_{\nu}(x) + H J_{\nu}(x) \qquad \textrm{and} \qquad
I_{\nu,H}(x) := x I'_{\nu}(x) + H I_{\nu}(x).
$$
Making use of \eqref{id3} we get
\begin{align} \label{id6}
J_{\nu,H}(x) & = x^{1-H} \big[ x^{H} J_{\nu}(x)\big]' = (H+\nu) J_{\nu}(x) - x J_{\nu+1}(x), \\
I_{\nu,H}(x) & = x^{1-H} \big[ x^{H} I_{\nu}(x)\big]' = (H+\nu) I_{\nu}(x) + x I_{\nu+1}(x).
\label{id66}
\end{align}

The function $J_{\nu,H}$ has infinitely many isolated zeros in $(0,\infty)$. We denote their sequence in an ascending order
of magnitude by $\{\z_n^{\nu,H} : n \ge 1\}$.
The function $I_{\nu,H}$ has no zeros in $(0,\infty)$ when $H + \nu \ge 0$, and exactly one
strictly positive zero otherwise$^{\dag}$ which we denote by $\z_0^{\nu,H}$
(note that$^{\ddag}$ when $\nu+H < 0$, $\pm i \z_0^{\nu,H}$ are the only imaginary zeros of $(\cdot)^{-\nu} J_{\nu,H}$,
cf.\ \cite[Chap.\,XVIII, Sec.\,18{$\cdot$3}]{watson}).
In case $H+\nu = 0$, we also set $\z_0^{\nu,H} := 0$.
%%%%%%%%%
\footnote{$\dag$ This can immediately be seen from \eqref{id66} and the Mittag-Leffler expansion
$\frac{x I_{\nu+1}(x)}{I_{\nu}(x)} = \sum_{k=1}^{\infty} \frac{2}{1+(\z_{k}^{\nu}/ x)^2}$.}
%%%%%%%%%
\footnote{$\ddag$ It is easy to check that $(ix)^{-\nu}J_{\nu,H}(ix) = x^{-\nu}I_{\nu,H}(x)$ for $x>0$
and $H \in \mathbb{R}$.}
%%%%%%%%%

From Dixon's theorem, cf.\ \cite[Chap.\,XV, Sec.\,15$\cdot$23]{watson}, it follows that for each fixed $H \in \mathbb{R}$
the zeros of $J_{\nu}$ interlace those of $J_{\nu,H}$. Thus, by the well known asymptotics for the zeros of $J_{\nu}$
(see \cite[Chap.\,XV, Sec.\,15$\cdot$53]{watson}) we conclude that
\begin{equation} \label{azer}
\z_n^{\nu,H} = \pi n + \mathcal{O}(1), \qquad n \ge 1.
\end{equation}

There is a vast literature devoted to zeros of Bessel and related functions,
which covers also functions like $J_{\nu,H}$. The basic reference is Watson's monograph \cite{watson}.
For more recent developments concerning zeros of $J_{\nu,H}$,
see for instance \cite{landau} and references given there.

%%%%%%%%%%%%%%%%%%%%%%%%%%%%%%%%%%%%%%%%%%%%%%%%%%%%%%%%%%%%%%%%%%%%%%%%%%%%%%%%%%%%%%%%%%%%%
\subsection{Fourier-Dini system} \label{ssec:FD}

Define the quantities
$$
c_n^{\nu,H} := \frac{\sqrt{2}}{|J_{\nu}(\z_n^{\nu,H})|} \frac{\z_n^{\nu,H}}{\sqrt{\big(\z_n^{\nu,H}\big)^2-\nu^2+H^2}}, \qquad n \ge 1,
$$
and, in case $\nu + H < 0$, define also
$$
c_0^{\nu,H} := \frac{\sqrt{2}}{I_{\nu}(\z_0^{\nu,H})} \frac{\z_0^{\nu,H}}{\sqrt{\big(\z_0^{\nu,H}\big)^2 + \nu^2-H^2}}.
$$
Note that the constants $c_n^{\nu,H}$, $H \in \mathbb{R}$, $n \ge \chi_{\{H+\nu \ge 0\}}$,
are well-defined and strictly positive. This follows from evaluation of $(c_n^{\nu,H})^{-2}$
as certain strictly positive integrals, see the comments below.

Next, define the following functions on the interval $(0,1)$:
$$
\psi_n^{\nu,H}(x) := c_n^{\nu,H} \sqrt{x} J_{\nu}\big(\z_n^{\nu,H}x\big), \qquad n \ge 1.
$$
In addition, in case $\nu + H \le 0$, define
$$
\psi_0^{\nu,H}(x) :=
	\begin{cases}
		c_0^{\nu,H} \sqrt{x} I_{\nu}\big(\z_0^{\nu,H}x\big), & \;\; \textrm{if} \;\; H+\nu < 0, \\
		\sqrt{2(\nu+1)}\, x^{\nu+1/2}, & \;\; \textrm{if} \;\; H+\nu = 0.
	\end{cases}
$$

The result stated below is well-known, see \cite[Chap.\,XVIII]{watson} and \cite{H}.
The difficult part of the proof is completeness of the Fourier-Dini system, cf.\ \cite{H}.
On the other hand, orthogonality and values of the normalizing constants $c_n^{\nu,H}$ are deduced from Lommel's integrals, cf.\
\cite[Chap.\,V, Sec.\,5$\cdot$11]{watson}. Convenient references to the formulas needed here are the following:
\cite[Sec.\,1.8.3, Form.\,10]{prud2}, \cite[Sec.\,1.11.5, Form.\,2]{prud2},
\cite[Sec.\,1.8.1, Form.\,21]{prud2} (orthogonality) and
\cite[Sec.\,1.8.3, Form.\,11]{prud2}, \cite[Sec.\,1.11.3, Form.\,4]{prud2} (values of the normalizing constants).
\begin{theorem} \label{thm:onb}
Let $\nu > -1$ and $H \in \mathbb{R}$ be fixed. Then the Fourier-Dini system
$$
\big\{ \psi_n^{\nu,H} : n \ge \chi_{\{H+\nu > 0\}} \big\}
$$
is an orthonormal basis in $L^2((0,1),dx)$.
\end{theorem}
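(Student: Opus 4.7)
The plan is to recognize the system as the complete set of normalized eigenfunctions of a self-adjoint Sturm-Liouville problem on $(0,1)$ and to verify orthogonality, correct normalization, and completeness in turn.

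First I would observe that if $\phi(x) := \sqrt{x}\, J_\nu(\z x)$ then a direct computation gives $-\phi'' + \frac{\nu^2-1/4}{x^2}\phi = \z^2 \phi$ on $(0,1)$, while using \eqref{id3} one finds
$$
\phi'(1) + (H-1/2)\phi(1) = J_{\nu,H}(\z).
$$
So each $\psi_n^{\nu,H}$ with $n\ge 1$ is (up to normalization) an $L^2$-regular solution of $-\phi'' + \frac{\nu^2-1/4}{x^2}\phi = \z^2\phi$ satisfying the Robin condition $\phi'(1)+(H-1/2)\phi(1)=0$. The analogous computation with $I_\nu$ yields the same ODE with $-\z^2$ in place of $\z^2$, accounting for the additional eigenfunction $\psi_0^{\nu,H}$ when $H+\nu<0$; in the borderline case $H+\nu=0$ the zero eigenvalue appears with eigenfunction proportional to $x^{\nu+1/2}$.

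Orthogonality and the values of $c_n^{\nu,H}$ then follow from Lommel's identities. For $\a\ne\b$,
$$
(\a^2-\b^2)\int_0^1 J_\nu(\a x) J_\nu(\b x)\, x\, dx = \b J_\nu(\a) J'_\nu(\b) - \a J_\nu(\b) J'_\nu(\a),
$$
and the right-hand side vanishes whenever both $\a,\b$ satisfy $xJ'_\nu(x)+HJ_\nu(x)=0$. The $I_\nu$-analogue of Lommel's identity, combined with $(ix)^{-\nu}J_{\nu,H}(ix) = x^{-\nu}I_{\nu,H}(x)$ from the footnote, yields orthogonality of $\psi_0^{\nu,H}$ against each $\psi_n^{\nu,H}$, $n\ge 1$. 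Letting $\b\to\a$ in Lommel's identity gives $\int_0^1 J_\nu(\a x)^2 x\, dx = \frac{1}{2}[J'_\nu(\a)^2 + (1-\nu^2/\a^2)J_\nu(\a)^2]$; substituting $\a J'_\nu(\a) = -H J_\nu(\a)$ collapses this to $\bigl((\a^2-\nu^2+H^2)/(2\a^2)\bigr)J_\nu(\a)^2$, reproducing the stated formula for $c_n^{\nu,H}$. The exceptional cases ($I_\nu$ for $\psi_0^{\nu,H}$ when $H+\nu<0$, and the direct evaluation $\int_0^1 x^{2\nu+1}dx = 1/(2(\nu+1))$ when $H+\nu=0$) are handled in the same spirit.

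The main obstacle is completeness. I would handle it operator-theoretically. The formal Bessel operator $T_\nu := -\frac{d^2}{dx^2} + \frac{\nu^2-1/4}{x^2}$ on the core of smooth functions compactly supported away from $0$ and satisfying $\phi'(1)+(H-1/2)\phi(1)=0$ is symmetric and semibounded below (by Hardy's inequality); one then selects the Friedrichs extension at $x=0$, required only in the limit-circle regime $-1<\nu<1$, since for $\nu\ge 1$ the operator is already in the limit-point case. This yields a self-adjoint extension on $L^2((0,1),dx)$ whose resolvent is an integral operator with kernel explicitly computable by variation of parameters using the pair $\sqrt{x}J_\nu(\z x)$, $\sqrt{x}Y_\nu(\z x)$; a routine Hilbert-Schmidt estimate shows this resolvent is compact. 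A self-adjoint operator with compact resolvent admits an orthonormal eigenbasis, and by the computation above these eigenfunctions are exactly $\{\psi_n^{\nu,H} : n \ge \chi_{\{H+\nu > 0\}}\}$, the extra $\psi_0^{\nu,H}$ appearing precisely when a nonpositive eigenvalue exists. An alternative, more classical route—pursued by Hobson \cite{H} and referenced in the paper—proves termwise convergence of the Fourier-Dini expansion for a dense class of test functions via a contour integration of the Green's function of $T_\nu$, and then concludes completeness by density and Parseval.
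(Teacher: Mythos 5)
You should first note that the paper does not actually prove Theorem \ref{thm:onb}: it records the result as classical, pointing to \cite{watson} for orthogonality and the normalizing constants (via Lommel's integrals, exactly the computation you carry out) and to Hochstadt \cite{H} (not Hobson) for completeness. Your orthogonality and normalization steps are correct: the Lommel numerator vanishes when $\a J'_{\nu}(\a)=-HJ_{\nu}(\a)$ and $\b J'_{\nu}(\b)=-HJ_{\nu}(\b)$, the confluent case collapses to $\frac{\a^2-\nu^2+H^2}{2\a^2}J_{\nu}(\a)^2$ matching $c_n^{\nu,H}$, and the $I_{\nu}$ analogue produces $\a^2+\nu^2-H^2$ matching $c_0^{\nu,H}$. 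The boundary computation $\phi'(1)+(H-\tfrac12)\phi(1)=J_{\nu,H}(\z)$ is also right and identifies the eigenvalue condition correctly.

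The genuine gap is in the completeness argument: the Friedrichs extension is the wrong self-adjoint realization for $-1<\nu<0$. Near $x=0$ the equation $-\phi''+\frac{\nu^2-1/4}{x^2}\phi=\z^2\phi$ has solutions behaving like $x^{1/2+\nu}$ and $x^{1/2-\nu}$, and in the limit-circle regime $|\nu|<1$ the Friedrichs extension selects the principal behavior $x^{1/2+|\nu|}$. For $\nu\in(-1,0)$ that is $x^{1/2-\nu}$, so the eigenfunctions of the Friedrichs extension with the Robin condition at $1$ are $\sqrt{x}\,J_{-\nu}(\mu x)$ with $\mu$ running over the zeros of $J_{-\nu,H}$; your argument would therefore establish completeness of the Fourier-Dini system with parameter $-\nu$, not $\nu$. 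Note that the paper's left-endpoint condition $x^{\nu+1/2}\big[\frac{\nu+1/2}{x}u-u_x\big]\big|_{x=0^+}=0$ annihilates precisely the coefficient of $x^{1/2-\nu}$, which for negative $\nu$ is the non-Friedrichs choice. The repair is to define the extension by that Wronskian-type condition at $0$ (equivalently, as the self-adjoint extension whose domain contains $\spann\{\psi_n^{\nu,H}\}$); the compact-resolvent step and the identification of the full eigenbasis then go through as you describe. Two smaller points: semiboundedness of the quadratic form requires a trace-type estimate for the boundary term $(H-\tfrac12)|\phi(1)|^2$ in addition to Hardy's inequality; and for non-integer $\nu$ the second solution in your variation-of-parameters kernel is more naturally $\sqrt{x}\,J_{-\nu}(\z x)$ than $\sqrt{x}\,Y_{\nu}(\z x)$.
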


The next result is also known and can be easily verified by a direct computation with the aid of \eqref{id5}.
It says that the Fourier-Dini system consists of eigenfunctions of the Bessel differential operator
\begin{equation} \label{id8}
\mathbb{L}^{\nu}f(x) := -\frac{d^2}{dx^2}f(x) - \frac{1/4-\nu^2}{x^2}f(x)
	= - x^{-\nu-1/2} \frac{d}{dx} \Big( x^{2\nu+1} \frac{d}{dx} \big[ x^{-\nu-1/2}f(x)\big] \Big).
\end{equation}
\begin{proposition} \label{prop:eigen}
Let $\nu > -1$ and $H \in \mathbb{R}$. Then
$$
\mathbb{L}^{\nu} \psi_n^{\nu,H}
= (-1)^{\chi_{\{n=0\}}}\big(\z_n^{\nu,H}\big)^2 \psi_n^{\nu,H}, \qquad n \ge \chi_{\{H+\nu > 0\}}.
$$
\end{proposition}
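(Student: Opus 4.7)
The plan is to apply the factored form of $\mathbb{L}^{\nu}$ given in \eqref{id8} directly to each of the three defining expressions for $\psi_n^{\nu,H}$, exploiting the fact that the $x^{-\nu-1/2}$ prefactor strips off the $\sqrt{x}$ in $\psi_n^{\nu,H}$ and exposes the Bessel function in its ``clean'' form $x^{-\nu}J_{\nu}(\lambda x)$ or $x^{-\nu}I_{\nu}(\lambda x)$, to which \eqref{id5} applies almost verbatim.

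For $n \ge 1$, write $\lambda = \lambda_n^{\nu,H}$. First I would compute
\[
x^{-\nu-1/2}\psi_n^{\nu,H}(x) = c_n^{\nu,H}\, x^{-\nu} J_{\nu}(\lambda x).
\]
Using the $J$-version of \eqref{id5} together with the chain rule (noting $x^{-\nu}J_{\nu}(\lambda x)=\lambda^{\nu}(\lambda x)^{-\nu}J_{\nu}(\lambda x)$), the derivative equals $-c_n^{\nu,H}\lambda\, x^{-\nu}J_{\nu+1}(\lambda x)$. Multiplying by $x^{2\nu+1}$ gives $-c_n^{\nu,H}\lambda\, x^{\nu+1}J_{\nu+1}(\lambda x)$, and now one more differentiation via the companion form of \eqref{id5}, namely $[y^{\nu+1}J_{\nu+1}(y)]' = y^{\nu+1}J_{\nu}(y)$ (obtained by index-shifting, or equivalently from \eqref{id3} combined with the Bessel equation), yields $-c_n^{\nu,H}\lambda^{2}\, x^{\nu+1}J_{\nu}(\lambda x)$. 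Multiplying the whole expression by $-x^{-\nu-1/2}$ collapses it to $(\lambda_n^{\nu,H})^{2}\,\psi_n^{\nu,H}(x)$, as required.

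For $n = 0$ in the case $H+\nu<0$, I would repeat the same four-step computation with $I_{\nu}$ in place of $J_{\nu}$. The only difference is that the $I$-version of \eqref{id5} carries the opposite sign: $[y^{-\nu}I_{\nu}(y)]' = +y^{-\nu}I_{\nu+1}(y)$, and similarly for the companion identity. Each of the two differentiations now contributes a sign opposite to the $J$-case, so after applying both identities the net change compared with the $J$-computation is a single overall sign, producing $\mathbb{L}^{\nu}\psi_0^{\nu,H} = -(\lambda_0^{\nu,H})^{2}\psi_0^{\nu,H}$, consistent with the factor $(-1)^{\chi_{\{n=0\}}}$.

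Finally, in the degenerate case $H+\nu = 0$, I would just observe that $x^{-\nu-1/2}\psi_0^{\nu,H}(x) = \sqrt{2(\nu+1)}$ is constant in $x$, so the innermost derivative in \eqref{id8} vanishes identically and $\mathbb{L}^{\nu}\psi_0^{\nu,H}\equiv 0$; this matches the right-hand side since $\lambda_0^{\nu,H} := 0$ by convention here. There is no real obstacle in this proof: the main bookkeeping is tracking the chain-rule factors of $\lambda$ when translating \eqref{id5} from argument $y$ to argument $\lambda x$, and tracking the sign reversal between $J_{\nu}$ and $I_{\nu}$ which accounts exactly for the $(-1)^{\chi_{\{n=0\}}}$.
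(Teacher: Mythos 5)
Your approach is exactly the one the paper intends: the paper offers no written proof of Proposition \ref{prop:eigen} beyond the remark that it can be verified by a direct computation with the aid of \eqref{id5}, and your computation through the divergence form \eqref{id8} is precisely that verification. The case $n\ge 1$ and the degenerate case $H+\nu=0$ are handled correctly, including the chain-rule factors of $\z$ and the derivation of the companion identity $[y^{\nu+1}J_{\nu+1}(y)]'=y^{\nu+1}J_{\nu}(y)$ from \eqref{id3} and the Bessel equation.

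There is, however, an inconsistency in your sign bookkeeping for the case $n=0$, $H+\nu<0$. You assert that both differentiation identities reverse sign when passing from $J_{\nu}$ to $I_{\nu}$ --- the $I$-version of \eqref{id5} ``and similarly for the companion identity.'' The latter is false: one has $[y^{\nu+1}J_{\nu+1}(y)]'=y^{\nu+1}J_{\nu}(y)$ and $[y^{\nu+1}I_{\nu+1}(y)]'=y^{\nu+1}I_{\nu}(y)$, with the \emph{same} sign. Moreover, if each of the two differentiations really contributed an opposite sign, the net effect would be $(-1)^2=+1$, i.e.\ no change at all, and your premises would yield the wrong eigenvalue $+\big(\z_0^{\nu,H}\big)^2$; the correct conclusion $-\big(\z_0^{\nu,H}\big)^2$ that you state does not follow from what you wrote. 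The correct accounting is that only \eqref{id5} reverses sign between $J$ and $I$, so the computation picks up exactly one factor of $-1$, which is precisely the factor $(-1)^{\chi_{\{n=0\}}}$. With this one correction the argument is complete.
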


From the divergence form \eqref{id8} of $\mathbb{L}^{\nu}$ it is easily seen that the Bessel operator is symmetric
and non-negative on $C_c^2(0,1) \subset L^2((0,1),dx)$. The operator $\mathbb{L}^{\nu}$ acting on the domain 
$C_c^2(0,1)$ has a self-adjoint extension $\widetilde{\mathbb{L}}^{\nu,H}$ naturally associated with 
the Fourier-Dini system $\{\psi_n^{\nu,H}\}$ and defined via its spectral decomposition as
$$
\widetilde{\mathbb{L}}^{\nu,H}f = \sum_{n \ge \chi_{\{H+\nu > 0\}}}
	(-1)^{\chi_{\{n=0\}}} \big( \z_n^{\nu,H}\big)^2 \big\langle f, \psi_n^{\nu,H}\big\rangle \psi_n^{\nu,H}
$$
on the domain$^{\sharp}$
$$
\domain \widetilde{\mathbb{L}}^{\nu,H} = \bigg\{ f \in L^2((0,1),dx) : \sum_{n \ge \chi_{\{H+\nu > 0\}}}
	\Big| \big( \z_n^{\nu,H}\big)^2 \big\langle f, \psi_n^{\nu,H}\big\rangle \Big|^2 < \infty \bigg\}.
$$
To see that this is indeed the extension from $C_c^2(0,1)$, observe that
$(\z_n^{\nu,H})^2 \langle f, \psi_n^{\nu,H}\rangle = \langle\mathbb{L}^{\nu}f,\psi_n^{\nu,H}\rangle$
for any $f \in C_c^2(0,1)$.
Note that $\widetilde{\mathbb{L}}^{\nu,H}$ is not non-negative in case $\nu + H < 0$, but it is otherwise.
Note also that despite of the lack of non-negativity when $\nu + H <0$, the operator $\widetilde{\mathbb{L}}^{\nu,H}$
is semi-bounded from below.
%%%
\footnote{$\sharp$
The domain $\domain \widetilde{\mathbb{L}}^{\nu,H}$ can be thought of as the maximal domain in $L^2$ sense for the expression
defining $\widetilde{\mathbb{L}}^{\nu,H}$, i.e.\ it consists of all functions $f \in L^2((0,1),dx)$ such that the formula
defining $\widetilde{\mathbb{L}}^{\nu,H}f$ yields a function in $L^2((0,1),dx)$
(with convergence of the series being understood in the $L^2$ sense).
}
%%%
Let $T_t^{\nu,H} = \exp(-t\widetilde{\mathbb{L}}^{\nu,H})$, $t \ge 0$,
be the semigroup generated by $- \widetilde{\mathbb{L}}^{\nu,H}$.
Then one has the integral representation 
$$
T_t^{\nu,H}f(x) = \int_0^1 G_t^{\nu,H}(x,y) f(y) \, dy, \qquad x \in (0,1), \quad t > 0,
$$
with the kernel given by the oscillatory series
\begin{equation} \label{serG}
G_t^{\nu,H}(x,y) = \sum_{n \ge \chi_{\{H+\nu > 0\}}} \exp\Big({-t (-1)^{\chi_{\{n=0\}}}\big(\z_n^{\nu,H}\big)^2}\Big)
	\psi_n^{\nu,H}(x) \psi_n^{\nu,H}(y), \qquad x,y \in (0,1), \quad t > 0.
\end{equation}
With the aid of \eqref{as2} and \eqref{azer} it can be seen that it is legitimate to differentiate the above series in $x,y$
and $t$ arbitrarily many times, hence $G_t^{\nu,H}(x,y)$ is a smooth function of $(x,y,t) \in (0,1)^2 \times (0,\infty)$.
We call $G_t^{\nu,H}(x,y)$ the heat kernel associated with Fourier-Dini expansions since $u(x,t) := T_t^{\nu,H}f(x)$
solves the heat equation based on $\mathbb{L}^{\nu}$, with initial values $u(\cdot,0)$ prescribed by $f$, the Robin boundary condition
$$\Big(H-\frac{1}2\Big)u(1,t) + u_x(1,t)=0$$ at the right endpoint,
and the condition
$$x^{\nu+1/2}\Big[\frac{\nu+1/2}x u(x,t) - u_x(x,t)\Big]\bigg|_{x=0^+}=0$$ at the left endpoint.
Note that for $\nu = - 1/2$ and for $\nu=1/2$ one recovers the Neumann and the Dirichlet$^\flat$ conditions at $x=0^+$,
respectively.
%%%
\footnote{$\flat$
Let $F(x)=xf(x)$. Under mild assumptions on $f$, the condition $F'(0^+)=0$ is equivalent to the Dirichlet condition $f(0^+)=0$.
Indeed, if $f(0^+)=0$, then $F(0^+)=0$ and $F'(0^+) = \lim_{x\to 0^+}\frac{F(x)-F(0^+)}{x}=0$.
On the other hand, if $F'(0^+)=0$, then e.g.\ local integrability of $f$ forces $F(0^+)=0$ and in this case
$f(0^+) = \lim_{x\to 0^+} \frac{F(x)-F(0^+)}{x} = F'(0^+)=0$.
}

%%%%%%%%%%%%%%%%%%%%%%%%%%%%%%%%%%%%%%%%%%%%%%%%%%%%%%%%%%%%%%%%%%%%%%%%%%%%%%%%%%%%%%%%%%%%%
\section{Main result} \label{sec:main}
%%%%%%%%%%%%%%%%%%%%%%%%%%%%%%%%%%%%%%%%%%%%%%%%%%%%%%%%%%%%%%%%%%%%%%%%%%%%%%%%%%%%%%%%%%%%%

We shall prove the following sharp bounds.
\begin{theorem} \label{thm:main}
Assume that $\nu > -1$. Given any $T > 0$, one has
$$
G_t^{\nu,1/2}(x,y) \simeq \bigg[ \frac{xy}t \wedge 1\bigg]^{\nu+1/2}
	\frac{1}{\sqrt{t}} \exp\bigg(- \frac{(x-y)^2}{4t} \bigg),
$$
uniformly in $x,y \in (0,1)$ and $0 < t \le T$. Moreover,
$$
G_t^{\nu,1/2}(x,y) \simeq (xy)^{\nu+1/2} \times
	\begin{cases}
		\exp\Big( -t\big(\z_{1}^{\nu,1/2}\big)^2\Big), & \;\; \textrm{if} \;\; \nu > -1/2, \\
		1, & \;\; \textrm{if} \;\; \nu =-1/2, \\
		\exp\Big( t\big(\z_{0}^{\nu,1/2}\big)^2\Big), & \;\; \textrm{if} \;\; \nu < -1/2,
	\end{cases}
$$
uniformly in $x,y \in (0,1)$ and $t \ge T$.
\end{theorem}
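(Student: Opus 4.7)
The argument splits naturally into a long-time regime $t\ge T$ and a short-time regime $0<t\le T$, which I would handle by quite different methods.

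For the long-time part the plan is to use the spectral series \eqref{serG} directly. The eigenvalue asymptotics \eqref{azer} provide a uniform spectral gap between the smallest eigenvalue and the rest, so after extracting the leading term the remainder is a geometric tail which is negligible for $t\ge T$. In the case $\nu>-1/2$, the leading term is $\exp(-t(\z_1^{\nu,1/2})^2)\psi_1^{\nu,1/2}(x)\psi_1^{\nu,1/2}(y)$; Dixon's interlacing places $\z_1^{\nu,1/2}$ strictly below the first positive zero of $J_\nu$, so together with \eqref{as1} this forces $z^{-\nu}J_\nu(z)$ to be comparable to a positive constant on $[0,\z_1^{\nu,1/2}]$, yielding $\psi_1^{\nu,1/2}(x)\simeq x^{\nu+1/2}$ uniformly on $(0,1)$ and hence the claim. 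The cases $\nu=-1/2$ (leading term the constant $\psi_0^{-1/2,1/2}\equiv 1$) and $\nu<-1/2$ (leading term $\exp(t(\z_0^{\nu,1/2})^2)\psi_0^{\nu,1/2}(x)\psi_0^{\nu,1/2}(y)$, where the positive exponent arises from $(-1)^{\chi_{\{n=0\}}}=-1$) are handled analogously, now using the asymptotic of $z^{-\nu}I_\nu(z)$ on bounded intervals.

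For the short-time bound, which is the essence of the result, I would follow the indirect method signalled in the introduction: establish a precise link between $T_t^{\nu,1/2}$ and a suitable Jacobi heat semigroup, and then import the sharp Jacobi heat kernel bounds of \cite{NSS1,NSS2,NSS3}. Concretely, one identifies a Jacobi system of type $(\nu,-1/2)$---with $\nu$ reflecting the Bessel singularity on one side and $-1/2$ the Neumann-type condition on the other---whose generator, after a Liouville change of variable on $(0,1)$ and a compatible multiplicative conjugation, coincides with $\widetilde{\mathbb{L}}^{\nu,1/2}$ up to a bounded zero-order perturbation. The known sharp Jacobi bound already carries the Gaussian factor $t^{-1/2}\exp(-(x-y)^2/(4t))$ and the boundary factor $[xy/t\wedge 1]^{\nu+1/2}$, while the factor associated with the Neumann endpoint has exponent $0$ and is therefore trivial, so transporting the estimate through the conjugation produces exactly the asserted short-time bound.

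The main obstacle is assembling these ingredients while preserving \emph{sharpness} on both sides. The Liouville conjugation introduces smooth but non-constant multiplicative factors, the bounded zero-order remainder from the operator comparison has to be absorbed---via a Duhamel perturbation argument or a direct semigroup comparison---without damaging either the Gaussian factor or the boundary exponent $\nu+1/2$, and one must verify that the Fourier-Dini normalizations $c_n^{\nu,1/2}$ agree with those of the corresponding Jacobi eigenbasis. Once these pieces are in place, the long-time and short-time estimates combine into the uniform bounds asserted in the theorem.
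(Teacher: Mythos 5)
Your overall strategy is the one the paper actually follows: the long-time bounds come from the spectral series \eqref{serG} (your leading-term extraction, including the observation that $\z_1^{\nu,1/2}$ lies below the first positive zero of $J_\nu$ so that $\psi_1^{\nu,1/2}(x)\simeq x^{\nu+1/2}$, is exactly the computation the paper leaves to the reader in Proposition \ref{prop:tlarge}), and the short-time bounds are transferred from the Jacobi kernel $K_t^{\nu,-1/2}$ of Theorem \ref{thm:jac}, whose $\beta=-1/2$ boundary factor is trivial. One small correction: no Liouville change of variable or conjugation is needed, since the functions $\Phi_k^{\nu,-1/2}$ are already set up on $((0,1),dx)$ and the difference $\mathbb{L}^{\nu}-\mathbb{J}^{\nu,-1/2}=F^{\nu}$ is directly a bounded multiplication operator; likewise, no matching of the normalizing constants $c_n^{\nu,1/2}$ with Jacobi normalizations is required, because the comparison is made at the level of semigroups, not eigenfunctions.

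The genuine gap is that the step you yourself flag as ``the main obstacle''---absorbing the bounded zero-order perturbation without destroying two-sided sharpness---is exactly the heart of the proof, and you do not carry it out. A Duhamel expansion is the wrong tool here: it naturally produces an upper bound with an additive error and gives no matching lower bound with the sharp Gaussian factor. The paper instead applies the Trotter product formula to $A=\widetilde{\mathbb{J}}^{\nu,-1/2}$ and $B=$ multiplication by $F^{\nu}$, and uses positivity of the Jacobi semigroup to squeeze $e^{-tB/m}$ between its extreme values, yielding the clean two-sided inequality $e^{-t\max F^{\nu}}K_t^{\nu,-1/2}\le G_t^{\nu,1/2}\le e^{-t\min F^{\nu}}K_t^{\nu,-1/2}$ (Proposition \ref{prop:GKrel}). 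Crucially, Trotter requires $A+B=\widetilde{\mathbb{L}}^{\nu,1/2}$ as self-adjoint operators, i.e.\ the coincidence of domains $\domain\widetilde{\mathbb{L}}^{\nu,1/2}=\domain\widetilde{\mathbb{J}}^{\nu,-1/2}$. This is a nontrivial fact (Lemmas \ref{lem:tech} and \ref{lem:tech2}), proved by a double integration by parts in which the boundary terms vanish precisely because $H=1/2$ is paired with $\beta=-1/2$; indeed Theorem \ref{thm:dom} shows this is essentially the only parameter combination for which the domains coincide. Your proposal does not engage with this domain issue at all, and without it the operator-level identification underlying the transfer is unjustified.
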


\begin{remark}
Assuming $H=1/2$, the cases $\nu = \pm 1/2$ are explicit, see \eqref{expl}.
One has $\z_n^{-1/2,1/2} = \pi n$, $\z_n^{1/2,1/2} = \pi(n-1/2)$ and
$$
\psi_n^{-1/2,1/2}(x) =
	\begin{cases}
		1, & n=0, \\
		\sqrt{2} \cos(\pi n x), & n \ge 1
	\end{cases}
	\qquad \textrm{and} \qquad
\psi_n^{1/2,1/2}(x) = \sin\big(\pi(n-1/2)x\big).
$$
In the cases just mentioned sharp estimates for the kernels
$G_t^{\pm 1/2,1/2}(x,y)$ are known, since those are the classical heat kernels on $(0,1)$ corresponding to
the standard Laplacian $\mathbb{L}^{\pm 1/2} = -\frac{d^2}{dx^2}$ with the Neumann boundary condition at the
right endpoint and either the Neumann ($\nu=-1/2$) or the Dirichlet ($\nu=1/2$) condition at the left endpoint.
\end{remark}

The proof of Theorem \ref{thm:main} requires some preparations.
Thus, in Section \ref{ssec:jac} we describe the Jacobi trigonometric Lebesgue measure setting scaled to $(0,1)$
and invoke the relevant sharp estimates for the associated heat kernel. In Section \ref{ssec:prep} we verify
coincidence of domains of $\widetilde{\mathbb{L}}^{\nu,1/2}$ and its counterpart in the Jacobi context.
Then in Section \ref{ssec:proof} we give the proof of Theorem \ref{thm:main}.
For various comments on the method of proving Theorem \ref{thm:main}, see Section \ref{ssec:com}.

%%%%%%%%%%%%%%%%%%%%%%%%%%%%%%%%%%%%%%%%%%%%%%%%%%%%%%%%%%%%%%%%%%%%%%%%%%%%%%%%%%%%%%%%%%%%%
\subsection{Jacobi trigonometric Lebesgue measure setting scaled to $(0,1)$} \label{ssec:jac}

This context will serve us as a source of results to be transferred to the Fourier-Dini framework.
Let $P_k^{\alpha,\beta}$, 
$k=0,1,2,\ldots$, be the classical Jacobi polynomials
with type parameters $\alpha,\beta>-1$, cf.\ \cite{Sz}. Define
$$
\Phi_k^{\alpha,\beta}(x) = C_k^{\alpha,\beta} 
		\Big(\sin\frac{\pi x}2\Big)^{\alpha+1/2} \Big(\cos\frac{\pi x}2\Big)^{\beta+1/2}
		P_k^{\alpha,\beta}\big(\cos\pi x\big), \qquad k \ge 0, \quad x \in (0,1),
$$
with
$$
C_k^{\alpha,\beta} = \bigg( \frac{\pi (2k+\alpha+\beta+1)\Gamma(k+\alpha+\beta+1)\Gamma(k+1)}
	{\Gamma(k+\alpha+1)\Gamma(k+\beta+1)} \bigg)^{1/2},
$$
where for $k=0$ and $\alpha+\beta=-1$ the product $(2k+\alpha+\beta+1)\Gamma(k+\alpha+\beta+1)$
must be replaced by $\Gamma(\alpha+\beta+2)$. Then the system $\{\Phi_k^{\alpha,\beta}: k\ge 0\}$
is an orthonormal basis in $L^2((0,1),dx)$. Moreover, each $\Phi_k^{\alpha,\beta}$ is an eigenfunction
of the Jacobi differential operator
$$
\mathbb{J}^{\alpha,\beta} = -\frac{d^2}{dx^2} - \frac{\pi^2(1/4-\alpha^2)}{4\sin^2(\pi x /2)}
	- \frac{\pi^2(1/4-\beta^2)}{4\cos^2(\pi x/2)},
$$
and one has
$$
\mathbb{J}^{\alpha,\beta} \Phi_k^{\alpha,\beta} = \Lambda_k^{\alpha,\beta} \Phi_k^{\alpha,\beta},
	 \qquad k \ge 0, \qquad \textrm{where} \qquad \Lambda_k^{\alpha,\beta} = 
	 \pi^2 \Big( k + \frac{\alpha+\beta+1}2\Big)^2.
$$
Thus, acting initially on $\spann\{\Phi_k^{\a,\b} : k \ge 0\}$,
$\mathbb{J}^{\alpha,\beta}$ has a natural self-adjoint extension on $L^2((0,1),dx)$ given by
$$
\widetilde{\mathbb{J}}^{\alpha,\beta} f = \sum_{k=0}^{\infty} \Lambda_k^{\alpha,\beta} 
	\big\langle f, \Phi_k^{\alpha,\beta} \big\rangle \Phi_k^{\alpha,\beta}
$$
on the domain
$$
\domain \widetilde{\mathbb{J}}^{\alpha,\beta} = \bigg\{ f \in L^2((0,1),dx) : \sum_{k=0}^{\infty}
	\big| \Lambda_k^{\alpha,\beta} \big\langle f, \Phi_k^{\alpha,\beta}\big\rangle \big|^2 < \infty \bigg\}.
$$

The semigroup generated by $-\widetilde{\mathbb{J}}^{\alpha,\beta}$ has the integral representation
$$
\exp\big(-t\widetilde{\mathbb{J}}^{\alpha,\beta}\big)f(x) = 
\int_0^1 {K}_t^{\alpha,\beta}(x,y)f(y)\, dy,
$$
where the Jacobi heat kernel is given by
$$
{K}_t^{\alpha,\beta}(x,y) = \sum_{k=0}^{\infty} \exp\big( -t\Lambda_k^{\alpha,\beta}\big)
	\Phi_k^{\alpha,\beta}(x) \Phi_k^{\alpha,\beta}(y), \qquad x,y \in (0,1), \quad t > 0.
$$

Sharp short-time estimates for $K_t^{\alpha,\beta}(x,y)$ are deduced from the results in \cite{NSS1,NSS2,NSS3},
see \cite[Theorem~A]{NSS3},
the relation between various Jacobi heat kernels given at the end of \cite[Section 2]{NoSj} and a simple scaling argument.
\begin{theorem}[{\cite{NSS1,NSS2,NSS3}}] \label{thm:jac}
Assume that $\a,\b > -1$. Given any $T > 0$, the estimates
$$
K_t^{\a,\b}(x,y) \simeq  \bigg[ \frac{xy}t \wedge 1\bigg]^{\alpha+1/2}
	\bigg[ \frac{(1-x)(1-y)}t \wedge 1 \bigg]^{\beta+1/2} \frac{1}{\sqrt{t}} \exp\bigg(
		- \frac{(x-y)^2}{4t} \bigg),
$$
hold uniformly in $x,y \in (0,1)$ and $0 < t \le T$.
\end{theorem}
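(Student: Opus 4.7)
The plan is to deduce this from already-established sharp Jacobi heat kernel bounds in a standard normalization, combined with a simple affine scaling. The genuine analytic input is \cite[Theorem A]{NSS3}, which together with \cite{NSS1,NSS2} gives sharp two-sided bounds for the Jacobi heat kernel in the ``polynomial'' setting on $(0,\pi)$ with respect to the weighted measure $d\mu_{\alpha,\beta}(\theta)=(\sin\frac{\theta}{2})^{2\alpha+1}(\cos\frac{\theta}{2})^{2\beta+1}d\theta$. The first move is to convert those bounds into the trigonometric Lebesgue-measure form on $(0,\pi)$ using the relation between the two flavours of Jacobi kernels recorded at the end of \cite[Section 2]{NoSj}: the Lebesgue-measure kernel $\widetilde K_s^{\alpha,\beta}(\theta,\theta')$ is obtained from the polynomial-measure one by multiplying by $(\sin\frac{\theta}{2})^{\alpha+1/2}(\cos\frac{\theta}{2})^{\beta+1/2}$ and its counterpart at $\theta'$. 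Since these factors are comparable to $\theta^{\alpha+1/2}(\pi-\theta)^{\beta+1/2}$ on $(0,\pi)$, the absorption turns the sharp polynomial-setting bounds into
$$
\widetilde K_s^{\alpha,\beta}(\theta,\theta')\simeq \Bigl[\tfrac{\theta\theta'}{s}\wedge 1\Bigr]^{\alpha+1/2}\Bigl[\tfrac{(\pi-\theta)(\pi-\theta')}{s}\wedge 1\Bigr]^{\beta+1/2}\frac{1}{\sqrt{s}}\exp\Bigl(-\tfrac{(\theta-\theta')^2}{4s}\Bigr),
$$
uniformly in $\theta,\theta'\in(0,\pi)$ and $0<s\le S$ for any fixed $S>0$.

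The second step is the linear change of variable $\theta=\pi x$ which maps $(0,\pi)$ onto $(0,1)$. Under this map, the $(0,\pi)$ trigonometric Lebesgue basis passes to the basis $\{\Phi_k^{\alpha,\beta}\}$ defined in the paper, with the extra $\sqrt{\pi}$ from the change of measure absorbed into the constants $C_k^{\alpha,\beta}$; the Jacobi operator rescales as $\mathbb{J}^{\alpha,\beta}=\pi^2\,\widetilde{\mathbb{J}}^{\alpha,\beta}$, and the eigenvalues $\Lambda_k^{\alpha,\beta}$ already show the factor $\pi^2$. Hence the semigroup at time $t$ on $(0,1)$ equals the semigroup at time $s=\pi^2 t$ on $(0,\pi)$, and the integral kernels satisfy the identity $K_t^{\alpha,\beta}(x,y)=\pi\,\widetilde K_{\pi^2 t}^{\alpha,\beta}(\pi x,\pi y)$. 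Plugging $s=\pi^2 t$, $\theta=\pi x$, $\theta'=\pi y$ into the display above, the factor of $\pi$ out front cancels against $1/\sqrt{\pi^2 t}=1/(\pi\sqrt t)$, the factors of $\pi^2$ in the two bracketed expressions cancel between numerator and denominator, and the Gaussian exponent simplifies to $-(x-y)^2/(4t)$; the time window $0<t\le T$ corresponds to $0<s\le \pi^2 T$.

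Since \cite{NSS1,NSS2,NSS3} supply the hard analytic content, the only real work left is scaling bookkeeping. The main pitfall I would guard against is consistency of the $\pi$-factors between the two heat kernels and the exact form of the weight-absorption identity in \cite{NoSj}: in particular, one must verify that the normalising constants $C_k^{\alpha,\beta}$ in the definition of $\Phi_k^{\alpha,\beta}$ are precisely those which make the $(0,1)$ basis orthonormal with respect to Lebesgue measure once the $\sqrt\pi$ from $d\theta=\pi\,dx$ is taken into account, and that the polynomial-to-Lebesgue factor ratio produces the endpoint comparabilities $\sin(\pi x/2)\simeq x$ and $\cos(\pi x/2)\simeq 1-x$ with the right exponents. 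Apart from this purely mechanical tracking, the proof is immediate.
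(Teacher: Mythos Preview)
Your proposal is correct and follows exactly the route the paper indicates: invoke \cite[Theorem~A]{NSS3} for the sharp bounds in the polynomial Jacobi setting on $(0,\pi)$, pass to the trigonometric Lebesgue-measure kernel via the relation at the end of \cite[Section~2]{NoSj}, and then rescale from $(0,\pi)$ to $(0,1)$. The paper states this deduction in a single sentence without spelling out the bookkeeping, so your expanded account is faithful to, and more detailed than, the original.
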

The large-time sharp estimates for $K_t^{\a,\b}(x,y)$ are known as well, see e.g.\ \cite[Remark 7.3]{LN},
but we will not need them in the present paper.

%%%%%%%%%%%%%%%%%%%%%%%%%%%%%%%%%%%%%%%%%%%%%%%%%%%%%%%%%%%%%%%%%%%%%%%%%%%%%%%%%%%%%%%%%%%%%
\subsection{Preparatory results} \label{ssec:prep}

Similarly as in \cite{NR2}, denote
$$
F^{\nu}(x) := \mathbb{L}^{\nu} - \mathbb{J}^{\nu,-1/2}
	= \bigg(\frac{1}4 - \nu^2 \bigg) \bigg[ \frac{\pi^2}{4\sin^2\frac{\pi x}2} - \frac{1}{x^2}\bigg].
$$
This function is continuous and monotone on $[0,1]$, with the value at $x=0$ understood in the limiting sense.
Thus, the extreme values on $[0,1]$ are
$$
F^{\nu}(0) = \bigg(\frac{1}4 - \nu^2 \bigg) \frac{\pi^2}{12} \qquad \textrm{and} \qquad
F^{\nu}(1) = \bigg(\frac{1}4 - \nu^2 \bigg) \bigg(\frac{\pi^2}4 -1\bigg).
$$
In particular,
$$
|F^{\nu}(x)| \le |F^{\nu}(1)| = \Bigg|\frac{1}4 - \nu^2\bigg| \bigg(\frac{\pi^2}4-1\bigg), \qquad x \in [0,1].
$$
Note that $\pi^2/12 \approx 0.82$ and $\pi^2/4-1 \approx 1.47$. % 0.8224670336 and 1.467401101

\begin{lemma} \label{lem:tech}
Let $\nu > -1$. Then
\begin{equation} \label{341}
\Big\langle \widetilde{\mathbb{L}}^{\nu,1/2} \psi_n^{\nu,1/2}, \Phi_k^{\nu,-1/2}\Big\rangle
 = \Big\langle \psi_n^{\nu,1/2}, \mathbb{L}^{\nu}\Phi_k^{\nu,-1/2}\Big\rangle, \qquad
	n \ge \chi_{\{\nu > -1/2\}}, \quad k \ge 0,
\end{equation}
where $\mathbb{L}^{\nu}$ is the Bessel differential operator \eqref{id8}. Similarly,
\begin{equation} \label{342}
\Big\langle \widetilde{\mathbb{J}}^{\nu,-1/2}\Phi_k^{\nu,-1/2}, \psi_n^{\nu,1/2}\Big\rangle
 = \Big\langle \Phi_k^{\nu,-1/2}, \mathbb{J}^{\nu,-1/2} \psi_n^{\nu,1/2}\Big\rangle, \qquad
	n \ge \chi_{\{\nu > -1/2\}}, \quad k \ge 0.
\end{equation}
\end{lemma}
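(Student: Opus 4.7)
The plan is to reduce both identities to the formal symmetry of a single second-order differential expression, which I then verify by a careful integration by parts; the only delicate point is the limiting behaviour at the left endpoint $x=0$ in the regime $-1<\nu\le -1/2$.

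First, by Proposition \ref{prop:eigen}, the operator-theoretic action $\widetilde{\mathbb{L}}^{\nu,1/2}\psi_n^{\nu,1/2}$ coincides pointwise with the classical differential expression $\mathbb{L}^{\nu}\psi_n^{\nu,1/2}$, as both produce the same multiple of $\psi_n^{\nu,1/2}$. Analogously, $\widetilde{\mathbb{J}}^{\nu,-1/2}\Phi_k^{\nu,-1/2}=\mathbb{J}^{\nu,-1/2}\Phi_k^{\nu,-1/2}=\Lambda_k^{\nu,-1/2}\Phi_k^{\nu,-1/2}$ holds classically. Since $\mathbb{L}^{\nu}-\mathbb{J}^{\nu,-1/2}=F^{\nu}$ is multiplication by a bounded real function, both \eqref{341} and \eqref{342} are equivalent to the single formal symmetry statement
$$\langle \mathbb{L}^{\nu}\psi_n^{\nu,1/2}, \Phi_k^{\nu,-1/2}\rangle = \langle \psi_n^{\nu,1/2}, \mathbb{L}^{\nu}\Phi_k^{\nu,-1/2}\rangle,$$
with $\mathbb{L}^{\nu}$ applied classically to the smooth functions involved.

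The potential term $(1/4-\nu^2)/x^2$ cancels in the difference $\mathbb{L}^{\nu}\psi_n^{\nu,1/2}\,\Phi_k^{\nu,-1/2} - \psi_n^{\nu,1/2}\,\mathbb{L}^{\nu}\Phi_k^{\nu,-1/2}$, so a double integration by parts on $[\varepsilon,1]$ yields
$$\int_\varepsilon^1 \big(\mathbb{L}^{\nu}\psi_n^{\nu,1/2}\,\Phi_k^{\nu,-1/2} - \psi_n^{\nu,1/2}\,\mathbb{L}^{\nu}\Phi_k^{\nu,-1/2}\big)\,dx = \Big[-(\psi_n^{\nu,1/2})'\Phi_k^{\nu,-1/2} + \psi_n^{\nu,1/2}(\Phi_k^{\nu,-1/2})'\Big]_\varepsilon^1.$$
The left-hand integrand lies in $L^1(0,1)$ (a difference of products of $L^2$-functions), so the integral converges to its counterpart on $(0,1)$ as $\varepsilon\to 0^+$; it thus suffices to show that the right-hand boundary term vanishes at $x=1$ and tends to zero as $\varepsilon\to 0^+$.

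At $x=1$ the defining relation $J_{\nu,1/2}(\lambda_n^{\nu,1/2})=0$ together with \eqref{id6} forces $(\psi_n^{\nu,1/2})'(1)=0$, while a direct differentiation of $\Phi_k^{\nu,-1/2}(x)=C_k^{\nu,-1/2}(\sin(\pi x/2))^{\nu+1/2}P_k^{\nu,-1/2}(\cos\pi x)$ forces $(\Phi_k^{\nu,-1/2})'(1)=0$, as each summand of the derivative carries a factor of either $\cos(\pi/2)$ or $\sin\pi$; the bracket thus vanishes at $x=1$. The main technical hurdle is the limit at $x=0$, especially when $-1<\nu\le -1/2$ and both functions blow up there. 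I would use the factorisations $\psi_n^{\nu,1/2}(x)=x^{\nu+1/2}A(x)$ and $\Phi_k^{\nu,-1/2}(x)=x^{\nu+1/2}B(x)$, where $A$ and $B$ extend to \emph{even} analytic functions near $0$ (via the power series \eqref{as1} of $J_\nu$ and the analogous expansions of $(\sin(\pi x/2))^{\nu+1/2}x^{-\nu-1/2}$ and $P_k^{\nu,-1/2}(\cos\pi x)$). A direct computation then yields
$$-(\psi_n^{\nu,1/2})'\Phi_k^{\nu,-1/2}+\psi_n^{\nu,1/2}(\Phi_k^{\nu,-1/2})' = x^{2\nu+1}\big(A(x)B'(x)-A'(x)B(x)\big),$$
and because $A, B$ are even, $AB'-A'B=O(x)$ near $0$, so the bracket is $O(x^{2\nu+2})$. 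This tends to zero as $x\to 0^+$ precisely because $\nu>-1$, which is the only place where the standing assumption is used sharply, completing the proof.
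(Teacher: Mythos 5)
Your proof is correct, and it reaches the same formal-symmetry statement $\langle \mathbb{L}^{\nu}\psi_n^{\nu,1/2},\Phi_k^{\nu,-1/2}\rangle=\langle\psi_n^{\nu,1/2},\mathbb{L}^{\nu}\Phi_k^{\nu,-1/2}\rangle$ by double integration by parts, as the paper does, but the boundary analysis is organized genuinely differently. The paper integrates by parts using the divergence form of $\mathbb{L}^{\nu}$ in \eqref{id8}, obtaining two boundary contributions $\mathcal{I}_1,\mathcal{I}_2$ that do not vanish individually at $x=1$; it must add them and recognize $(\nu+1/2)J_{\nu}(\lambda_n^{\nu,1/2})-\lambda_n^{\nu,1/2}J_{\nu+1}(\lambda_n^{\nu,1/2})=J_{\nu,1/2}(\lambda_n^{\nu,1/2})=0$ via \eqref{id6}. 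You keep the plain Wronskian $-\psi'\Phi+\psi\Phi'$ and note that at $x=1$ the two derivatives vanish separately: $(\psi_n^{\nu,1/2})'(1)=0$ is precisely the defining equation $J_{\nu,1/2}(\lambda_n^{\nu,1/2})=0$ (resp.\ $I_{\nu,1/2}(\lambda_0^{\nu,1/2})=0$), and $(\Phi_k^{\nu,-1/2})'(1)=0$ because $\beta=-1/2$ kills the cosine factor — the same identity surfacing as a Neumann condition rather than as a cancellation of two terms. At $x=0$ the paper gains the needed extra power of $x$ from the index shift $\nu\to\nu+1$ in \eqref{id5} combined with \eqref{as1}; your parity argument ($A,B$ even analytic, hence $AB'-A'B=\mathcal{O}(x)$ and the Wronskian is $\mathcal{O}(x^{2\nu+2})$) achieves the same gain more transparently and isolates exactly where $\nu>-1$ is used. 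Two small points to make explicit: for $n=0$ and $\nu<-1/2$ the relevant factor is $I_{\nu}$ rather than $J_{\nu}$, so you should record that $z^{-\nu}I_{\nu}(z)$ is likewise even and entire (it is, so nothing changes); and in reducing \eqref{342} to the same symmetry statement you implicitly use that $F^{\nu}$ is a real-valued bounded multiplier, so the corresponding inner products converge and agree — worth one sentence.
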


\begin{proof}
It is enough to show \eqref{341} since then \eqref{342} is a simple consequence of the identity
$\mathbb{J}^{\nu,-1/2} = \mathbb{L}^{\nu} - F^{\nu}$ for differential operators.

We shall use the divergence form of $\mathbb{L}^{\nu}$, see \eqref{id8}, and integrate twice by parts.
Denote
$$
\mathcal{I} := \Big\langle \widetilde{\mathbb{L}}^{\nu,1/2}\psi_n^{\nu,1/2}, \Phi_k^{\nu,-1/2} \Big\rangle
	= \Big\langle {\mathbb{L}}^{\nu}\psi_n^{\nu,1/2}, \Phi_k^{\nu,-1/2} \Big\rangle
	= \int_0^1 \mathbb{L}^{\nu}\psi_n^{\nu,1/2}(x) \Phi_k^{\nu,-1/2}(x)\, dx.
$$
Clearly, the above integral converges since $\mathbb{L}^{\nu}\psi_n^{\nu,1/2}$ (see Proposition \ref{prop:eigen})
and $\Phi_k^{\nu,-1/2}$ are in $L^2((0,1),dx)$. Then we have
\begin{align*}
\mathcal{I} & = - \int_0^1 x^{-\nu-1/2} \frac{d}{dx} \Big( x^{2\nu+1} \frac{d}{dx} \big[ x^{-\nu-1/2}\psi_n^{\nu,1/2}(x)\big] \Big)
	\Phi_k^{\nu,-1/2}(x)\, dx \\
& = - x^{\nu+1/2} \frac{d}{dx} \big[ x^{-\nu-1/2} \psi_n^{\nu,1/2}(x)\big] \Phi_k^{\nu,-1/2}(x) \Big|_0^1 \\
& \qquad + \int_0^1 \frac{d}{dx}\big[ x^{-\nu-1/2}\psi_n^{\nu,1/2}(x)\big] \frac{d}{dx}\big[ x^{-\nu-1/2}\Phi_k^{\nu,-1/2}(x)\big]
	x^{2\nu+1}\, dx \\
& =  - x^{\nu+1/2} \frac{d}{dx} \big[ x^{-\nu-1/2} \psi_n^{\nu,1/2}(x)\big] \Phi_k^{\nu,-1/2}(x) \Big|_0^1 \\
& \qquad + x^{\nu+1/2} \psi_n^{\nu,1/2}(x) \frac{d}{dx}\big[ x^{-\nu-1/2} \Phi_k^{\nu,-1/2}(x)\big] \Big|_0^1 \\
& \qquad - \int_0^1 \psi_n^{\nu,1/2}(x) x^{-\nu-1/2} \frac{d}{dx} \Big( x^{2\nu+1} \frac{d}{dx} \big[ x^{-\nu-1/2}\Phi_k^{\nu,-1/2}(x)
	\big] \Big)\, dx \\
& =: \mathcal{I}_1 + \mathcal{I}_2 + \mathcal{I}_3.
\end{align*}
Here $\mathcal{I}_3 = \big\langle \psi_n^{\nu,1/2}, \mathbb{L}^{\nu}\Phi_k^{\nu,-1/2} \big\rangle$, see \eqref{id8}, so it
remains to show that $\mathcal{I}_1 + \mathcal{I}_2 = 0$.

A direct computation with the aid of \eqref{id5} shows that for $n \ge 1$
$$
\mathcal{I}_1 = c_n^{\nu,1/2} \z_n^{\nu,1/2} \sqrt{x} J_{\nu+1}\big(\z_n^{\nu,1/2}x\big) \, C_k^{\nu,-1/2}
	\Big(\sin\frac{\pi x}{2}\Big)^{\nu+1/2} P_k^{\nu,-1/2}(\cos\pi x) \Big|_0^1,
$$
$\mathcal{I}_1 = 0$ in case $n=0$ and $\nu=-1/2$, and when $n=0$ and $\nu < -1/2$ one has
$$
\mathcal{I}_1 = - c_0^{\nu,1/2} \z_0^{\nu,1/2} \sqrt{x} I_{\nu+1}\big(\z_0^{\nu,1/2}x\big)\, C_k^{\nu,-1/2}
	\Big(\sin\frac{\pi x}{2}\Big)^{\nu+1/2} P_k^{\nu,-1/2}(\cos\pi x) \Big|_0^1.
$$
Using now \eqref{as1} we see that at $x=0^+$ the expression defining $\mathcal{I}_1$ vanishes, so only the value at $x=1$
matters and we get
\begin{equation} \label{I1f}
\mathcal{I}_1 = 
	\begin{cases}
		c_n^{\nu,1/2} \z_n^{\nu,1/2} J_{\nu+1}\big(\z_n^{\nu,1/2}\big)\, C_k^{\nu,-1/2} P_k^{\nu,-1/2}(-1), & n \ge 1, \\
		0, & n=0 \;\; \textrm{and} \;\; \nu=-1/2, \\
		- c_0^{\nu,1/2} \z_0^{\nu,1/2} I_{\nu+1}\big(\z_0^{\nu,1/2}\big)\, C_k^{\nu,-1/2} P_k^{\nu,-1/2}(-1), & n = 0 \;\;
			\textrm{and} \;\; \nu < -1/2.
	\end{cases} 
\end{equation}

To evaluate $\mathcal{I}_2$ we use the differentiation rule for Jacobi polynomials (cf.\ \cite[(4.21.7)]{Sz})
\begin{equation}\label{eq:jacder}
\frac{d}{du} P_k^{\a,\b}(u) = \frac{1}2 (k+\a+\b+1) P_{k-1}^{\a+1,\b+1}(u), \qquad k \ge 0,
\end{equation}
with the convention that $P^{\a,\b}_{-1} \equiv 0$. We get for $n \ge 1$
\begin{align*}
\mathcal{I}_2 & = 
c_n^{\nu,1/2} x^{\nu+1} J_{\nu}\big(\z_n^{\nu,1/2}x \big)\, C_k^{\nu,-1/2} \Bigg[ \Big(\nu+\frac{1}2\Big)
	\bigg( \frac{\sin\frac{\pi x}2}{x}\bigg)^{\nu-1/2} x \, \bigg\{ \frac{\frac{\pi x}2 \cos\frac{\pi x}2 - \sin\frac{\pi x}2}{x^3}\bigg\}
		P_k^{\nu,-1/2}(\cos\pi x) \\
& \qquad - \bigg( \frac{\sin\frac{\pi x}2}{x}\bigg)^{\nu+1/2} \frac{\pi (k+\nu+1/2)}{2} \sin(\pi x) P_{k-1}^{\nu+1,1/2}(\cos\pi x)
	\Bigg] \Bigg|_0^1,
\end{align*}
$\mathcal{I}_2 = 0$ in case $n=0$ and $\nu = -1/2$, and when $n=0$ and $\nu < -1/2$ the quantity $\mathcal{I}_2$
is given by the above expression with $n=0$ and $J_{\nu}$ replaced by $I_{\nu}$.
Using \eqref{as1} and observing that the quotient in curly brackets$^{\natural}$
stays bounded near $x=0$, we see that the expression defining $\mathcal{I}_2$ vanishes at $x=0^+$.
Consequently, taking the value at $x=1$,
%%%%%%%%%%%
\footnote{$\natural$ It is perhaps curious to observe that the expression in curly brackets is equal to
$-(\pi/2)^2 x^{-3/2}J_{3/2}({\pi x}/2)$.}
%%%%%%%%%%%
\begin{equation} \label{I2f}
\mathcal{I}_2 =
	\begin{cases}
		- c_n^{\nu,1/2} J_{\nu}\big(\z_n^{\nu,1/2}\big)\, (\nu+1/2) C_k^{\nu,-1/2} P_k^{\nu,-1/2}(-1), & n \ge 1, \\
		0, & n=0 \;\; \textrm{and} \;\; \nu=-1/2, \\
		- c_0^{\nu,1/2} I_{\nu}\big(\z_0^{\nu,1/2}\big)\, (\nu+1/2) C_k^{\nu,-1/2}
			P_k^{\nu,-1/2}(-1), & n=0 \;\; \textrm{and} \;\; \nu < -1/2.
	\end{cases}
\end{equation}

Adding \eqref{I1f} and \eqref{I2f} we obtain
$$
\mathcal{I}_1 + \mathcal{I}_2 =
	\begin{cases}
		-c_n^{\nu,1/2} C_k^{\nu,-1/2} P_k^{\nu,-1/2}(-1)\big[ (\nu+1/2) J_{\nu}\big(\z_n^{\nu,1/2}\big)
			- \z_n^{\nu,1/2}J_{\nu+1}\big(\z_n^{\nu,1/2}\big)\big], & n \ge 1, \\
		0, & n= 0,\;\; \nu = -1/2, \\
		-c_0^{\nu,1/2} C_k^{\nu,-1/2} P_k^{\nu,-1/2}(-1)\big[ (\nu+1/2) I_{\nu}\big(\z_0^{\nu,1/2}\big)
			+ \z_0^{\nu,1/2}I_{\nu+1}\big(\z_0^{\nu,1/2}\big)\big],&  n = 0,\;\; \nu < -1/2.
	\end{cases}
$$
In view of \eqref{id6} and \eqref{id66},
$$
\mathcal{I}_1 + \mathcal{I}_2 =
- c_n^{\nu,1/2} C_k^{\nu,-1/2} P_k^{\nu,-1/2}(-1) \times
	\begin{cases}
		J_{\nu,1/2}\big(\z_n^{\nu,1/2}\big), & n \ge 1, \\
		0, & n=0 \;\; \textrm{and} \;\; \nu=-1/2, \\
		I_{\nu,1/2}\big(\z_0^{\nu,1/2}\big), & n=0 \;\; \textrm{and} \;\; \nu < -1/2.
	\end{cases}
$$
It follows that $\mathcal{I}_1 + \mathcal{I}_2 = 0$, which finishes the proof.
\end{proof}

\begin{lemma} \label{lem:tech2}
For each $\nu > -1$,
$$
\domain \widetilde{\mathbb{L}}^{\nu,1/2} = \domain \widetilde{\mathbb{J}}^{\nu,-1/2}.
$$
\end{lemma}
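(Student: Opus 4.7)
The plan is to exploit Lemma \ref{lem:tech} together with the fact that multiplication by $F^{\nu}$ is a bounded operator on $L^2((0,1),dx)$, since $F^{\nu}$ is continuous (hence bounded) on $[0,1]$. Heuristically, the two self-adjoint operators differ by this bounded perturbation, so a bounded perturbation argument should force equality of their natural $L^2$-maximal domains.

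I would argue as follows. Fix $\nu > -1$ and take $f \in \domain \widetilde{\mathbb{L}}^{\nu,1/2}$; set $g := \widetilde{\mathbb{L}}^{\nu,1/2} f - F^{\nu} f$, which lies in $L^2((0,1),dx)$. Expanding $f = \sum_n \langle f, \psi_n^{\nu,1/2}\rangle \psi_n^{\nu,1/2}$ and $\widetilde{\mathbb{L}}^{\nu,1/2} f = \sum_n (-1)^{\chi_{\{n=0\}}}(\z_n^{\nu,1/2})^2 \langle f, \psi_n^{\nu,1/2}\rangle \psi_n^{\nu,1/2}$ in $L^2$, and using \eqref{341} together with the differential identity $\mathbb{L}^{\nu} = \mathbb{J}^{\nu,-1/2} + F^{\nu}$, I would check by a direct Parseval-type computation that
$$
\bigl\langle \widetilde{\mathbb{L}}^{\nu,1/2} f, \Phi_k^{\nu,-1/2}\bigr\rangle
= \bigl\langle f, \mathbb{L}^{\nu}\Phi_k^{\nu,-1/2}\bigr\rangle
= \Lambda_k^{\nu,-1/2}\bigl\langle f, \Phi_k^{\nu,-1/2}\bigr\rangle + \bigl\langle F^{\nu} f, \Phi_k^{\nu,-1/2}\bigr\rangle,
$$
for every $k \ge 0$. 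Rearranging gives $\Lambda_k^{\nu,-1/2}\langle f, \Phi_k^{\nu,-1/2}\rangle = \langle g, \Phi_k^{\nu,-1/2}\rangle$. Since $\{\Phi_k^{\nu,-1/2}\}$ is an orthonormal basis, Parseval yields $\sum_k |\Lambda_k^{\nu,-1/2}\langle f, \Phi_k^{\nu,-1/2}\rangle|^2 = \|g\|_{L^2}^2 < \infty$, which is precisely $f \in \domain \widetilde{\mathbb{J}}^{\nu,-1/2}$.

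The reverse inclusion $\domain \widetilde{\mathbb{J}}^{\nu,-1/2} \subset \domain \widetilde{\mathbb{L}}^{\nu,1/2}$ I would obtain by the same scheme, starting from $f \in \domain \widetilde{\mathbb{J}}^{\nu,-1/2}$, setting $g := \widetilde{\mathbb{J}}^{\nu,-1/2} f + F^{\nu} f$, and applying \eqref{342} to derive $(-1)^{\chi_{\{n=0\}}}(\z_n^{\nu,1/2})^2 \langle f, \psi_n^{\nu,1/2}\rangle = \langle g, \psi_n^{\nu,1/2}\rangle$; Parseval with the Fourier-Dini basis then gives the required square-summability.

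The main obstacle is really just the bookkeeping in passing from Lemma \ref{lem:tech}, which concerns only basis vectors, to arbitrary elements of the domains. This requires justifying the interchange of infinite summation with the $L^2$ inner product, but this is standard because all expansions converge in $L^2$ and the inner product with a fixed $L^2$ function is continuous. The one subtle point is that the identity $\langle f, \mathbb{L}^{\nu}\Phi_k^{\nu,-1/2}\rangle = \Lambda_k^{\nu,-1/2}\langle f, \Phi_k^{\nu,-1/2}\rangle + \langle f, F^{\nu}\Phi_k^{\nu,-1/2}\rangle$ uses $\mathbb{L}^{\nu}\Phi_k^{\nu,-1/2} \in L^2((0,1),dx)$, which is ensured by Proposition \ref{prop:eigen}-type reasoning in the Jacobi setting and the boundedness of $F^{\nu}$.
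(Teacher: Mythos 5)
Your proposal is correct and follows essentially the same route as the paper's proof: both rest on Lemma \ref{lem:tech}, the identity $\mathbb{L}^{\nu}-\mathbb{J}^{\nu,-1/2}=F^{\nu}$ with $F^{\nu}$ bounded, expansion in the respective orthonormal bases, and Parseval. The only cosmetic difference is that you identify $\Lambda_k^{\nu,-1/2}\langle f,\Phi_k^{\nu,-1/2}\rangle$ exactly as the Fourier coefficients of the single $L^2$ function $g$, whereas the paper bounds the resulting sum by $2\|\widetilde{\mathbb{J}}^{\nu,-1/2}f\|^2+2\|fF^{\nu}\|^2$ via the triangle inequality.
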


\begin{proof}
The reasoning is essentially the same as in the proof of \cite[Theorem 3.1]{NR2}$^\clubsuit$.
We will verify the inclusion $\domain\widetilde{\mathbb{J}}^{\nu,-1/2} \subset \domain\widetilde{\mathbb{L}}^{\nu,1/2}$.
The other inclusion follows by the same arguments, with the aid of \eqref{342} in Lemma \ref{lem:tech}.
\footnote{$\clubsuit$
In the proof of \cite[Theorem 3.1]{NR2} an argument based on self-adjointness used to conclude coincidence of
domains from their inclusion appears to be invalid. Fortunately, it can be easily replaced by an argument
given in the proof of Lemma~\ref{lem:tech2} (this requires also a slight extension of \cite[Lemma 3.2]{NR2}, see Lemma~\ref{lem:tech}).
Analogous remarks pertain to \cite[Lemma 7.6]{LN} and \cite[Lemma 7.5]{LN}.
}

Pick an arbitrary $f \in \domain\widetilde{\mathbb{J}}^{\nu,-1/2}$. We need to show that
$$
S:= \sum_{n \ge \chi_{\{\nu > -1/2\}}} \Big| \big(\z_n^{\nu,1/2}\big)^2 \big\langle f, \psi_n^{\nu,1/2}\big\rangle\Big|^2 < \infty.
$$

Observe that
$$
S = \sum_{n \ge \chi_{\{\nu > -1/2\}}} \Big| \Big\langle f, \widetilde{\mathbb{L}}^{\nu,1/2}\psi_n^{\nu,1/2}\Big\rangle\Big|^2
	= \sum_{n \ge \chi_{\{\nu > -1/2\}}} \bigg| \sum_{k=0}^{\infty} \big\langle f, \Phi_k^{\nu,-1/2}\big\rangle
		\Big\langle \Phi_k^{\nu,-1/2}, \widetilde{\mathbb{L}}^{\nu,1/2}\psi_n^{\nu,1/2}\Big\rangle\bigg|^2.
$$
Using now \eqref{341} in Lemma \ref{lem:tech} and recalling that $\mathbb{L}^{\nu}= \mathbb{J}^{\nu,-1/2} + F^{\nu}$, we can write
\begin{align*}
S & = \sum_{n \ge \chi_{\{\nu > -1/2\}}} \bigg| \sum_{k=0}^{\infty} \big\langle f, \Phi_k^{\nu,-1/2}\big\rangle
	\Big\langle \big(\mathbb{J}^{\nu,-1/2} + F^{\nu}\big) \Phi_k^{\nu,-1/2}, \psi_n^{\nu,1/2}\Big\rangle \bigg|^2 \\
& = \sum_{n \ge \chi_{\{\nu > -1/2\}}} \Bigg| \Bigg\langle \sum_{k=0}^{\infty} \Lambda_k^{\nu,-1/2} \big\langle
	f,\Phi_k^{\nu,-1/2}\big\rangle \Phi_k^{\nu,-1/2}, \psi_n^{\nu,1/2} \Bigg\rangle
	+ \Bigg\langle \sum_{k=0}^{\infty} \big\langle f, \Phi_k^{\nu,-1/2}\big\rangle \Phi_k^{\nu,-1/2}, \psi_n^{\nu,1/2}F^{\nu}
		\Bigg\rangle \Bigg|^2 \\
& = \sum_{n \ge \chi_{\{\nu > -1/2\}}} \Big| \Big\langle \widetilde{\mathbb{J}}^{\nu,-1/2}f, \psi_n^{\nu,1/2}\Big\rangle
	+ \big\langle f F^{\nu}, \psi_n^{\nu,1/2}\big\rangle \Big|^2.
\end{align*}
This, together with Parseval's identity, implies
$$
S \le 2 \big\|\widetilde{\mathbb{J}}^{\nu,-1/2}f\big\|^2_{L^2((0,1),dx)}
	+ 2 \big\| f F^{\nu}\big\|^2_{L^2((0,1),dx)}.
$$
Here the right-hand side is finite (recall that $F^{\nu}$ is bounded on $(0,1)$) and the conclusion follows.
\end{proof}

%%%%%%%%%%%%%%%%%%%%%%%%%%%%%%%%%%%%%%%%%%%%%%%%%%%%%%%%%%%%%%%%%%%%%%%%%%%%%%%%%%%%%%%%%%%%%
\subsection{{Proof of Theorem \ref{thm:main}}} \label{ssec:proof}
To begin with, we show that the Fourier-Dini heat kernel $G_t^{\nu,1/2}(x,y)$ is related to the Jacobi heat kernel
$K_t^{\nu,-1/2}(x,y)$ as follows.
\begin{proposition} \label{prop:GKrel}
For $\nu \in (-1,-1/2) \cup (1/2,\infty)$, one has
$$
e^{t |F^{\nu}(0)|} K_t^{\nu,-1/2}(x,y) \le G_t^{\nu,1/2}(x,y) \le e^{t |F^{\nu}(1)|} K_t^{\nu,-1/2}(x,y),
$$
while for $\nu \in [-1/2,1/2]$
$$
e^{-t F^{\nu}(1)} K_t^{\nu,-1/2}(x,y) \le G_t^{\nu,1/2}(x,y) \le e^{-t F^{\nu}(0)} K_t^{\nu,-1/2}(x,y),
$$
all inequalities for $x,y \in (0,1)$ and $t > 0$.
\end{proposition}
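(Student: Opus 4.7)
The strategy is to realize $\widetilde{\mathbb{L}}^{\nu,1/2}$ as a bounded multiplication perturbation of $\widetilde{\mathbb{J}}^{\nu,-1/2}$ and then transfer the known positivity and semigroup structure of the Jacobi kernel $K_t^{\nu,-1/2}$ through Trotter's product formula. By Lemma~\ref{lem:tech2} the two self-adjoint operators share a common domain, and on that domain the differential identity $\mathbb{L}^\nu = \mathbb{J}^{\nu,-1/2}+F^\nu$ upgrades to the operator identity $\widetilde{\mathbb{L}}^{\nu,1/2} = \widetilde{\mathbb{J}}^{\nu,-1/2} + F^\nu$, with $F^\nu$ acting as a bounded self-adjoint multiplication operator. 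Trotter's product formula therefore applies and yields, in the strong operator topology on $L^2((0,1),dx)$,
$$
e^{-t\widetilde{\mathbb{L}}^{\nu,1/2}} \;=\; \lim_{n\to\infty}\bigl(e^{-(t/n)F^\nu}\,e^{-(t/n)\widetilde{\mathbb{J}}^{\nu,-1/2}}\bigr)^{n}.
$$

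Next, I would unfold the integral kernel of the $n$-th approximant. Since each factor $e^{-(t/n)\widetilde{\mathbb{J}}^{\nu,-1/2}}$ has kernel $K_{t/n}^{\nu,-1/2}$ and $e^{-(t/n)F^\nu}$ is pointwise multiplication, the kernel at $(x,y)$ is an $(n-1)$-fold integral over auxiliary points $x_1,\ldots,x_{n-1}\in(0,1)$ of a chain $\prod_{k=0}^{n-1} K_{t/n}^{\nu,-1/2}(x_{k+1},x_k)$ (with $x_n=x$, $x_0=y$) weighted by $\prod_{k=1}^{n} e^{-(t/n)F^\nu(x_k)}$. Set $m := \min_{[0,1]} F^\nu$ and $M := \max_{[0,1]} F^\nu$; then the weighting sandwiches between $e^{-tM}$ and $e^{-tm}$ independently of the integration variables. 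The remaining iterated integral collapses by Chapman--Kolmogorov for the Jacobi semigroup to $K_t^{\nu,-1/2}(x,y)$, giving the uniform bound
$$
e^{-tM}\,K_t^{\nu,-1/2}(x,y)\;\le\;\mathrm{kernel}_n(x,y)\;\le\;e^{-tm}\,K_t^{\nu,-1/2}(x,y),\qquad n\ge 1.
$$

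Finally, I would test against arbitrary nonnegative $f,g\in L^2((0,1),dx)$ and invoke strong operator convergence to promote the above to the integrated inequality for $G_t^{\nu,1/2}$ in place of $\mathrm{kernel}_n$. Because both $G_t^{\nu,1/2}$ and $K_t^{\nu,-1/2}$ are continuous on $(0,1)^2$ (cf.\ the discussion after \eqref{serG} and the analogous fact in Section~\ref{ssec:jac}), a standard shrinking-bump argument at a generic point upgrades the integrated inequality to the desired pointwise bound $e^{-tM}K_t^{\nu,-1/2}(x,y)\le G_t^{\nu,1/2}(x,y)\le e^{-tm}K_t^{\nu,-1/2}(x,y)$. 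To match the two displayed formulas, it remains to identify $m$ and $M$: since the bracket $\pi^2/(4\sin^2(\pi x/2))-1/x^2$ is continuous, positive, and monotone on $[0,1]$ with extremes $\pi^2/12$ and $\pi^2/4-1$, the sign of the prefactor $1/4-\nu^2$ places the extrema of $F^\nu$ at the endpoints and in the order dictated by whether $|\nu|>1/2$ or $|\nu|\le 1/2$. Substituting yields exactly the two cases stated.

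The main obstacles I expect are (i) the rigorous application of Trotter in this semi-bounded self-adjoint setting, which is classical once the common domain of Lemma~\ref{lem:tech2} is in hand, and (ii) the passage from strong operator bounds to pointwise kernel bounds, which leans essentially on the continuity of the two kernels. The case bookkeeping for the sign and monotonicity of $F^\nu$ is delicate but purely mechanical.
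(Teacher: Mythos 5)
Your proposal is correct and follows essentially the same route as the paper: both identify $\widetilde{\mathbb{L}}^{\nu,1/2}=\widetilde{\mathbb{J}}^{\nu,-1/2}+F^{\nu}$ via Lemma~\ref{lem:tech2} and the boundedness of $F^{\nu}$, apply Trotter's product formula, sandwich the multiplication factor between $e^{-t\max F^{\nu}}$ and $e^{-t\min F^{\nu}}$ (the paper phrases this via lattice-positivity of the Jacobi semigroup rather than unfolding the Chapman--Kolmogorov chain, but this is the same mechanism), and then upgrade the integrated inequalities to pointwise kernel bounds using continuity of both kernels. The final endpoint bookkeeping for $F^{\nu}$ also matches the paper's discussion in Section~\ref{ssec:prep}.
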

Notice that, in view of Proposition \ref{prop:GKrel} and the fact that $F^{\pm 1/2}(x) \equiv 0$,
one has $G_t^{\pm 1/2,1/2}(x,y) \equiv K_t^{\pm 1/2, -1/2}(x,y)$; these identities can of course be verified directly.

\begin{proof}[{Proof of Proposition \ref{prop:GKrel}}]
We follow the idea from \cite{NR2} and employ the Trotter product formula, cf.\ \cite[Chap.\,VIII, Sec.\,8]{RS},
which can be formulated as follows.

Let $A$ and $B$ be (not necessarily bounded) self-adjoint operators on a Hilbert space $\mathcal{H}$.
If $A+B$ is essentially self-adjoint on $\domain A \cap \domain B$, and $A$ and $B$ are both bounded from below, then
$$
\exp\big( -t(A+B)\big)h = \lim_{m \to \infty}\big[ \exp(-tA/m) \exp(-tB/m)\big]^m h, \qquad h \in \mathcal{H}, \quad t \ge 0.
$$

For our purpose we let $\mathcal{H} = L^2((0,1),dx)$, $A = \widetilde{\mathbb{J}}^{\nu,-1/2}$ and $B$ be the multiplication
operator by $F^{\nu}$. Recall that $F^{\nu}$ is a continuous bounded function on $[0,1]$ with extreme values at the endpoints
of this interval. In particular, $B$ is bounded on $\mathcal{H}$ and Lemma \ref{lem:tech2} implies that
$A+B = \widetilde{\mathbb{L}}^{\nu,1/2}$. Further, we note that $\exp(-t B/m)$ is the multiplication operator by the function
$x \mapsto \exp(-t F^{\nu}(x)/m)$ and the Jacobi semigroup $\{\exp(-t\widetilde{\mathbb{J}}^{\nu,-1/2})\}$,
being positive (in the lattice sense), preserves inequalities between functions in $L^2((0,1),dx)$.
Thus, estimating $x \to \exp(-t F^{\nu}(x)/m)$ by its extrema on $[0,1]$, from the Trotter formula we conclude that
$$
\exp\Big( -t\max_{x \in [0,1]} F^{\nu}(x)\Big) \exp\big(-t\widetilde{\mathbb{J}}^{\nu,-1/2}\big)f \le T_t^{\nu,1/2}f
\le \exp\Big( -t\min_{x \in [0,1]} F^{\nu}(x)\Big) \exp\big(-t\widetilde{\mathbb{J}}^{\nu,-1/2}\big)f
$$
for all non-negative $f \in L^2((0,1),dx)$ and $t \ge 0$.
Since the corresponding heat kernels are continuous functions of their arguments, we get
$$
\exp\Big( -t\max_{x \in [0,1]} F^{\nu}(x)\Big) K_t^{\nu,-1/2}(x,y) \le G_t^{\nu,1/2}(x,y)
\le \exp\Big( -t\min_{x \in [0,1]} F^{\nu}(x)\Big) K_t^{\nu,-1/2}(x,y)
$$
for $x,y \in (0,1)$ and $t > 0$. The asserted bounds follow.
\end{proof}

Another ingredient we need are sharp large-time bounds for $G_t^{\nu,1/2}(x,y)$.
We state this result in greater generality for $G_t^{\nu,H}(x,y)$ with an arbitrary $H \in \mathbb{R}$, since it might
be of independent interest.
\begin{proposition} \label{prop:tlarge}
Let $\nu > -1$ and $H \in \mathbb{R}$ be fixed.
Then there exists $T > 0$ such that
$$
G_t^{\nu,H}(x,y) \simeq (xy)^{\nu+1/2} \times
	\begin{cases}
		\exp\Big( -t\big(\z_{1}^{\nu,H}\big)^2\Big), & \;\; \textrm{if} \;\; H + \nu > 0, \\
		1, & \;\; \textrm{if} \;\; H + \nu = 0, \\
		\exp\Big( t\big(\z_{0}^{\nu,H}\big)^2\Big), & \;\; \textrm{if} \;\; H + \nu < 0,
	\end{cases}
$$
uniformly in $x,y \in (0,1)$ and $t \ge T$.
\end{proposition}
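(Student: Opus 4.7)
The approach is to isolate the spectrally dominant term of the series \eqref{serG} and show that the rest is negligible relative to it once $t$ is large. The dominant term corresponds to $n=1$ when $H+\nu>0$ (where $(\z_1^{\nu,H})^2$ is the smallest eigenvalue), and to $n=0$ when $H+\nu\le 0$ (where the $n=0$ contribution is either constant in $t$, for $H+\nu=0$, or grows like $\exp(t(\z_0^{\nu,H})^2)$, for $H+\nu<0$). Denote this leading contribution by $M_t(x,y)$. The claim then follows once I establish $M_t(x,y)\simeq (xy)^{\nu+1/2}$ times the asserted time factor, and show that the remaining series is bounded by a small fraction of $M_t$ uniformly in $x,y\in(0,1)$ and $t\ge T$.

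The spatial comparability of $M_t$ reduces to showing that the single eigenfunction appearing in it is comparable to $x^{\nu+1/2}$ on $(0,1)$. Near $x=0$ this is immediate from \eqref{as1}. Away from $x=0$ one needs strict positivity together with non-vanishing at $x=1$: when $H+\nu<0$ the relevant eigenfunction is $\psi_0^{\nu,H}(x)=c_0^{\nu,H}\sqrt{x}\,I_\nu(\z_0^{\nu,H}x)$ and $I_\nu>0$ on $(0,\infty)$; when $H+\nu=0$ one has $\psi_0^{\nu,H}(x)=\sqrt{2(\nu+1)}\,x^{\nu+1/2}$ by definition; when $H+\nu>0$, I verify $\z_1^{\nu,H}<j_{\nu,1}$ (with $j_{\nu,1}$ the smallest positive zero of $J_\nu$) by noting that $J_{\nu,H}$ is positive near $0$ since $\nu+H>0$ and equals $j_{\nu,1}J_\nu'(j_{\nu,1})<0$ at $j_{\nu,1}$, whence $J_\nu(\z_1^{\nu,H}x)>0$ on $(0,1]$ and $\psi_1^{\nu,H}(x)/x^{\nu+1/2}$ is continuous and strictly positive on $[0,1]$.

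For the tail I use a uniform pointwise bound $|\psi_n^{\nu,H}(x)|\lesssim \min(n^{\nu+1/2}x^{\nu+1/2},1)$ valid for $\nu\ge -1/2$, derived from the uniform Bessel estimate $|J_\nu(z)|\lesssim z^\nu/(1+z)^{\nu+1/2}$ together with the large-$n$ asymptotics $c_n^{\nu,H}\simeq\sqrt n$ and $\z_n^{\nu,H}\simeq n$ coming from \eqref{as2} and \eqref{azer}; for $\nu\in(-1,-1/2)$ a uniform $L^\infty$ bound on $\psi_n^{\nu,H}$ suffices instead, since $(xy)^{\nu+1/2}\ge 1$ on $(0,1)^2$ in this regime. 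Splitting $\exp(-t(\z_n^{\nu,H})^2)=\exp(-T_0(\z_n^{\nu,H})^2)\exp(-(t-T_0)(\z_n^{\nu,H})^2)$ for a small fixed $T_0$, the first factor renders the sum over $n$ above the leading index convergent (combined with $|\psi_n(x)\psi_n(y)|\lesssim n^{2\nu+1}(xy)^{\nu+1/2}$), while the second factor is bounded uniformly in $n$ by $\exp(-(t-T_0)\mu)$, where $\mu=(\z_2^{\nu,H})^2$ if $H+\nu>0$ or $\mu=(\z_1^{\nu,H})^2$ if $H+\nu\le 0$. This yields
$$
\big| G_t^{\nu,H}(x,y)-M_t(x,y) \big| \lesssim (xy)^{\nu+1/2}\exp(-t\mu),
$$
and since $\mu$ strictly exceeds the exponential growth/decay rate in $M_t$, for $T$ sufficiently large the remainder is dominated by $M_t$ and the asserted two-sided bound follows.

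\textbf{Main obstacle.} The chief technical point is the uniform-in-$n$ pointwise estimate on $\psi_n^{\nu,H}$ carrying the correct $x^{\nu+1/2}$ factor; getting the right power forces a separate treatment of $\nu\ge -1/2$ and $\nu\in(-1,-1/2)$ via the appropriate uniform Bessel bound. A secondary subtlety is the positivity and non-vanishing at $x=1$ of the ground state when $H+\nu>0$, which relies on Dixon-type interlacing of zeros of $J_\nu$ and $J_{\nu,H}$ combined with the simplicity of the zeros of $J_\nu$.
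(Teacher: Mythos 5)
The paper's own proof of Proposition \ref{prop:tlarge} is only a one-line sketch (``combine \eqref{serG} with \eqref{azer}, \eqref{as1}, \eqref{as2}; details left to the reader''), and your proposal fills in precisely that sketch: isolate the spectrally leading term, show it is comparable to $(xy)^{\nu+1/2}$ times the time factor, and crush the tail by the spectral gap. The structure is sound, including the verification that $\z_1^{\nu,H}<j_{\nu,1}$ when $H+\nu>0$ (so that $J_\nu(\z_1^{\nu,H}x)$ stays positive on $(0,1]$) and the splitting $e^{-t(\z_n^{\nu,H})^2}=e^{-T_0(\z_n^{\nu,H})^2}e^{-(t-T_0)(\z_n^{\nu,H})^2}$.

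One auxiliary claim is false as stated: for $\nu\in(-1,-1/2)$ there is no ``uniform $L^\infty$ bound on $\psi_n^{\nu,H}$'', since by \eqref{as1} one has $\psi_n^{\nu,H}(x)\sim c_n^{\nu,H}(\z_n^{\nu,H})^{\nu}x^{\nu+1/2}/(2^{\nu}\Gamma(\nu+1))$ as $x\to0^+$, which blows up when $\nu+1/2<0$. The repair is immediate and uses exactly the ingredients you already have: from $c_n^{\nu,H}\simeq\sqrt{\z_n^{\nu,H}}$ and $|J_\nu(z)|\lesssim z^{\nu}$ for all $z>0$ (valid when $-1<\nu\le-1/2$, since then $z^{-1/2}\le z^{\nu}$ for $z\ge1$) one gets $|\psi_n^{\nu,H}(x)|\lesssim(\z_n^{\nu,H})^{\nu+1/2}x^{\nu+1/2}\lesssim x^{\nu+1/2}$ uniformly in $n\ge1$, which is the bound actually needed for the tail in that regime. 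With that substitution the argument goes through.
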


\begin{proof}
Combine the series representation \eqref{serG} with the asymptotics \eqref{azer}, \eqref{as1} and \eqref{as2}.
The details are elementary and standard, thus left to the reader.
\end{proof}

\begin{proof}[{Proof of Theorem \ref{thm:main}}]
The theorem is a direct consequence of Proposition \ref{prop:GKrel}, the short/bounded-time bounds for the Jacobi
heat kernel stated in Theorem \ref{thm:jac} and the large-time bounds from Proposition~\ref{prop:tlarge}.
\end{proof}

%%%%%%%%%%%%%%%%%%%%%%%%%%%%%%%%%%%%%%%%%%%%%%%%%%%%%%%%%%%%%%%%%%%%%%%%%%%%%%%%%%%%%%%%%%%%%
\subsection{Comments on the proof method} \label{ssec:com}

The idea of transference of short-time sharp heat kernel bounds from the Jacobi to the Fourier-Bessel setting was presented in
\cite{NR2}. Note that the Lebesgue measure Fourier-Bessel context can be formally interpreted
as the Fourier-Dini context with $H=\infty$, cf.\ \cite[Chap.\,XVIII, Sec.\,18{$\cdot$}11]{watson}.
In the present paper we further explore the idea to cover the case $H=1/2$.
As explained below, these two cases fully exhaust the applicability of the method within the Fourier-Bessel
and Fourier-Dini frameworks. Thus, treatment of other $H$ requires a different strategy.
Notably, there are expansions related to the Fourier-Bessel and Fourier-Dini contexts where the method still applies,
see \cite[Section 7]{LN}. In those cases, the transference source is certain expansions related to the Jacobi context.

There are two fundamental points that make the method applicable: boundedness of the difference between the
corresponding `Laplacians' (cf.\ properties of the function $F^{\nu}$) and coincidence of their domains (see Lemma \ref{lem:tech2}).

Consider $\a,\b,\nu > -1$. It is clear that
\begin{equation} \label{LJdiff}
\mathbb{L}^{\nu} - \mathbb{J}^{\a,\b} = \frac{\pi^2(1/4-\a^2)}{4\sin^2(\pi x /2)}
	+ \frac{\pi^2(1/4-\b^2)}{4\cos^2(\pi x /2)} - \frac{1/4-\nu^2}{x^2}
\end{equation}
is bounded on $(0,1)$ if and only if $\b = \pm 1/2$ and $\a = \pm \nu$.
On the other hand, the following holds.
\begin{theorem} \label{thm:dom}
Let $\a,\b,\nu > -1$ and $H \in \mathbb{R}$. Then the coincidence of domains
$$
\domain \widetilde{\mathbb{L}}^{\nu,H} = \domain \widetilde{\mathbb{J}}^{\a,\b}
$$
holds if and only if
$$
H=1/2 \qquad \textrm{and} \qquad \b=-1/2 \qquad \textrm{and} \qquad \big[\; \nu = \a \quad \textrm{or} \quad \a,\nu > 1 \; \big].
$$
\end{theorem}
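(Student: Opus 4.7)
The plan is to prove the theorem by separately analyzing the boundary behavior of functions in the two domains at $x=0$ and $x=1$, using the explicit eigenfunctions $\psi_{n}^{\nu,H}$ and $\Phi_{k}^{\alpha,\beta}$ as test elements. For the sufficiency direction when $H=1/2$, $\beta=-1/2$, and $\nu=\alpha$, the difference $\mathbb{L}^{\nu}-\mathbb{J}^{\nu,-1/2}=F^{\nu}$ is bounded multiplication and Lemma~\ref{lem:tech2} applies directly. For the second sufficient case, $H=1/2$, $\beta=-1/2$, and $\alpha,\nu>1$, the potential difference \eqref{LJdiff} is bounded near $x=1$ but has a $(\nu^{2}-\alpha^{2})/x^{2}$ singularity near $x=0$; I would extend the argument of Lemma~\ref{lem:tech2} by controlling this additional contribution via a Hardy-type inequality adapted to each domain.

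For the necessity direction, the strategy is to exhibit, for each excluded configuration of $(H,\alpha,\beta,\nu)$, a specific $\psi_{n}^{\nu,H}$ or $\Phi_{k}^{\alpha,\beta}$ lying in only one of the two domains. If $\beta\notin\{\pm 1/2\}$, the potential difference contains the singular term $(1/4-\beta^{2})(1-x)^{-2}$ near $x=1$; multiplying by $\psi_{n}^{\nu,H}$, which has non-zero value at $x=1$ for finite $H$, produces a function outside $L^{2}$, so that $\psi_{n}^{\nu,H}\notin\domain\widetilde{\mathbb{J}}^{\alpha,\beta}$. If $\beta=1/2$, the Jacobi eigenfunctions satisfy a Dirichlet-type condition $\Phi_{k}^{\alpha,1/2}(1)=0$ incompatible with $\psi_{n}^{\nu,H}(1)\neq 0$; a boundary-term computation in the spirit of Lemma~\ref{lem:tech} exhibits a non-vanishing surface term obstructing membership of $\psi_{n}^{\nu,H}$ in $\domain\widetilde{\mathbb{J}}^{\alpha,1/2}$. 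The case $\beta=-1/2$ and $H\neq 1/2$ is handled by the same type of boundary-term computation at $x=1$, now yielding a surface term proportional to $H-1/2$. Finally, if $H=1/2$, $\beta=-1/2$, $\nu\neq\alpha$, and $\nu\leq 1$, the asymptotics $\psi_{n}^{\nu,1/2}(x)\sim x^{\nu+1/2}$ near zero combined with the $(\nu^{2}-\alpha^{2})/x^{2}$ singularity yield $(\nu^{2}-\alpha^{2})x^{-2}\psi_{n}^{\nu,1/2}(x)\sim x^{\nu-3/2}$, which fails to be $L^{2}(0,\varepsilon)$ precisely when $\nu\leq 1$; the symmetric argument using $\Phi_{k}^{\alpha,-1/2}$ handles the case $\alpha\leq 1$.

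The main obstacle I anticipate is the Hardy-type argument in the sufficiency case $\alpha,\nu>1$: to conclude that the singular perturbation does not eject functions from the respective domains, one must first establish that every $f\in\domain\widetilde{\mathbb{L}}^{\nu,1/2}$, and every $f\in\domain\widetilde{\mathbb{J}}^{\alpha,-1/2}$, satisfies $x^{-2}f\in L^{2}$ near $0$, a regularity property not visible from the abstract spectral definitions. The natural route is to derive, for $\nu>1$, an inequality of the form
$$
\int_{0}^{1}x^{-4}|f(x)|^{2}\,dx \lesssim \big\|\widetilde{\mathbb{L}}^{\nu,1/2}f\big\|_{L^{2}}^{2} + \|f\|_{L^{2}}^{2},
$$
by exploiting the divergence form \eqref{id8} of $\mathbb{L}^{\nu}$ and the fact that in the limit-point regime $\nu\geq 1$ only $x^{\nu+1/2}$-type local solutions near $x=0$ contribute to the domain. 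Locating the threshold at $\nu>1$ strictly (and not $\nu\geq 1$) is the delicate point, and it is what forces the strict inequality $\alpha,\nu>1$ in the theorem.
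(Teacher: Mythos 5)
Your overall architecture matches the paper's proof quite closely: reduce to the case $\b=-1/2$ via the singularity of the potential difference at $x=1$ (and a boundary obstruction when $\b=1/2$), detect $H\neq 1/2$ by a surface term at $x=1$ proportional to $H-1/2$, detect $\nu\neq\a$ with $\a\wedge\nu\le 1$ by the failure of $x^{-2}\psi_1^{\nu,1/2}\in L^2$ near $0$, and prove the sufficiency for $\a,\nu>1$ by extending the argument of Lemma~\ref{lem:tech2} with a Rellich-type inequality $\|f/x^2\|_{L^2}\lesssim\|\widetilde{\mathbb{L}}^{\nu,1/2}f\|_{L^2}$ (this is exactly the paper's Lemma~\ref{lem:x2}). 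Two points are implicit in your sketch but do need proof: that a $C^2$ function in the spectral domain is mapped by the spectral operator to the classical differential expression (the paper's Lemma~\ref{lem:wd}), and that in the Wronskian computations one must cut off near the opposite endpoint (the paper's $\rho_0,\rho_1$) so that only one boundary term survives in the Lagrange identity.

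There is, however, one genuine gap. In the necessity step for $H=1/2$, $\b=-1/2$, $\nu\neq\a$, $\a\wedge\nu\le 1$, your argument rests on the function $(\nu^2-\a^2)x^{-2}\psi_1^{\nu,1/2}(x)\sim x^{\nu-3/2}$ failing to be square integrable near $0$. This breaks down precisely when $\a=-\nu$ (with $\nu\in(-1,1)\setminus\{0\}$, which is forced by $\a,\nu>-1$ and $\a\neq\nu$): the prefactor $\nu^2-\a^2$ vanishes, and indeed $\mathbb{L}^{\nu}$ and $\mathbb{L}^{-\nu}$ coincide as differential expressions, so no singular-potential argument can distinguish the two domains. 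The distinction here lives entirely in the boundary behavior at $x=0^+$ (eigenfunctions of order $x^{\nu+1/2}$ versus $x^{-\nu+1/2}$), and the paper handles it by a separate Lagrange-identity computation at the left endpoint: with $f=\rho_0\psi_1^{\nu,1/2}$ and $g=\psi_1^{-\nu,1/2}$ the Wronskian $fg'-f'g$ tends, as $x\to 0^+$, to a nonzero limit proportional to $\sin(\pi\nu)$, contradicting Lemma~\ref{lem:lagr} if the domains coincided. Your proposal needs this additional case; as written, the cases you enumerate do not cover it, and the mechanism you rely on provably cannot.
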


Altogether, this shows that the necessary and sufficient condition to have simultaneously boundedness of
$\mathbb{L}^{\nu} - \mathbb{J}^{\a,\b}$ and the coincidence of domains of $\widetilde{\mathbb{L}}^{\nu,H}$
and $\widetilde{\mathbb{J}}^{\a,\b}$ is $H=1/2$, $\b=-1/2$, $\nu=\a$. This is precisely the situation treated in the paper.

Theorem \ref{thm:dom} is actually stronger than necessary for our purpose, but the full characterization we
state seems to be of independent interest.
The proof of Theorem \ref{thm:dom} is presented in Section \ref{sec:dom} below. It is somewhat lengthy and technical,
therefore we postpone it to the last section to avoid possible confusion with the main line of the paper.

%%%%%%%%%%%%%%%%%%%%%%%%%%%%%%%%%%%%%%%%%%%%%%%%%%%%%%%%%%%%%%%%%%%%%%%%%%%%%%%%%%%%%%%%%%%%%
\section{Applications} \label{sec:app}
%%%%%%%%%%%%%%%%%%%%%%%%%%%%%%%%%%%%%%%%%%%%%%%%%%%%%%%%%%%%%%%%%%%%%%%%%%%%%%%%%%%%%%%%%%%%%

This section highlights several interesting applications of Theorem \ref{thm:main}.
To this end we always consider the specific $H=1/2$.
For terminology concerning strong, weak and restricted weak type boundedness/estimates we refer to
\cite[Chap.\,V]{SW}.

%%%%%%%%%%%%%%%%%%%%%%%%%%%%%%%%%%%%%%%%
\subsection{Maximal operators and boundary convergence of the Fourier-Dini semigroup}
In what follows we assume that each $T_t^{\nu,1/2}$, $t > 0$, is an integral operator associated with the kernel
$G_t^{\nu,1/2}(x,y)$ on a natural domain consisting of all functions for which the defining integral makes sense
for a.e.\ $x \in (0,1)$.

Consider the maximal operators
$$
T_*^{\nu,1/2}f = \sup_{t > 0}\big| T_t^{\nu,1/2}f \big| \qquad \textrm{and} \qquad
T_{*,\textrm{loc}}^{\nu,1/2}f = \sup_{0 < t \le 1}\big| T_t^{\nu,1/2}f \big|.
$$
Observe that obviously $T_{*,\textrm{loc}}^{\nu,1/2}f \le T_*^{\nu,1/2}f$, and that $T_*^{\nu,1/2}f$ is infinite
(cf.\ the large-time bounds in Theorem \ref{thm:main}) for $-1 < \nu < -1/2$ and, say, $f>0$.

As a consequence of Theorem \ref{thm:main} we will show the following.
\begin{theorem} \label{thm:max}
Let $1 \le p \le \infty$.
\begin{itemize}
\item[(a)]
If $\nu \ge -1/2$, then $T_*^{\nu,1/2}$ is bounded on $L^p((0,1),dx)$ for $p > 1$ and satisfies the weak type $(1,1)$ estimate.
\item[(b)]
If $-1 < \nu < -1/2$, then $T_{*,\textrm{loc}}^{\nu,1/2}$ is bounded on $L^p((0,1),dx)$ for $p_0 < p < p_1$,
satisfies the weak type $(p_1,p_1)$ estimate and the restricted weak type $(p_0,p_0)$ estimate, where
$p_0 := 1/(\nu+3/2)$ and $p_1:= -1/(\nu+1/2)$.
\end{itemize}
\end{theorem}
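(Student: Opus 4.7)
The plan is to dominate the maximal operators by classical sublinear operators via the kernel bounds of Theorem~\ref{thm:main}, and then invoke standard results (the Hardy-Littlewood maximal theorem and the Stein-Weiss inequality) to conclude.

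For part~(a), since $\nu+1/2 \ge 0$ one has $[xy/t \wedge 1]^{\nu+1/2} \le 1$, so the short-time bound reduces to the Gauss-Weierstrass form $G_t^{\nu,1/2}(x,y) \lesssim t^{-1/2}\, e^{-(x-y)^2/(4t)}$ for $0 < t \le 1$. This gives pointwise domination of $\sup_{0<t\le 1}|T_t^{\nu,1/2}f|$ by the Hardy-Littlewood maximal function, yielding strong $(p,p)$ for $p>1$ and weak $(1,1)$. For $t \ge 1$, Proposition~\ref{prop:tlarge} shows the kernel is controlled by a constant multiple of $(xy)^{\nu+1/2} \le 1$ (times an exponentially decaying factor when $\nu > -1/2$), so $\sup_{t \ge 1}|T_t^{\nu,1/2} f(x)| \lesssim \|f\|_1 \lesssim \|f\|_p$ and this part is trivially controlled.

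For part~(b), now $\nu+1/2 < 0$, and the identity $[xy/t \wedge 1]^{\nu+1/2} = \max(1,(xy/t)^{\nu+1/2}) \le 1 + (xy/t)^{\nu+1/2}$ splits the short-time kernel as
$$
G_t^{\nu,1/2}(x,y) \lesssim t^{-1/2}\, e^{-(x-y)^2/(4t)} + (xy)^{\nu+1/2}\, t^{-\nu-1}\, e^{-(x-y)^2/(4t)}, \qquad 0<t\le 1.
$$
The first summand again produces $Mf$. For the second, a one-variable optimization of $t \mapsto t^{-\nu-1}e^{-(x-y)^2/(4t)}$ on $(0,1]$ (critical point $t^{*}=(x-y)^2/(4(\nu+1))$, or the boundary value at $t=1$ in the complementary regime) yields $\sup_{0<t\le 1} t^{-\nu-1}e^{-(x-y)^2/(4t)} \lesssim |x-y|^{-2\nu-2} + 1$. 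Hence
$$
T_{*,\textrm{loc}}^{\nu,1/2} f(x) \lesssim Mf(x) + R_1 f(x) + R_2 f(x),
$$
where $R_1 f(x) = x^{\nu+1/2}\int_0^1 y^{\nu+1/2}|x-y|^{-2\nu-2} |f(y)|\,dy$ is a weighted fractional integral and $R_2 f(x) = x^{\nu+1/2}\langle |f|, y^{\nu+1/2}\rangle$ is a rank-one operator.

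Both $R_1$ and $R_2$ are classical objects. For the rank-one $R_2$, strong $(p,p)$ holds iff $x^{\nu+1/2}\in L^p$ and $y^{\nu+1/2} \in L^{p'}$, i.e., iff $p_0 < p < p_1$ (using the easy identity $(p_0)' = p_1$); at the endpoints, $x^{\nu+1/2}$ lies only in $L^{p_1,\infty}$, which gives weak type $(p_1, p_1)$ (pairing with $y^{\nu+1/2} \in L^{p_0}$) and restricted weak type $(p_0, p_0)$ (Lorentz-pairing with $y^{\nu+1/2} \in L^{p_1,\infty}$). The operator $R_1$ is precisely a Riesz potential of order $\alpha = -2\nu-1 \in (0,1)$ on the line with symmetric power weights of exponent $\nu+1/2$; the Stein-Weiss inequality, together with its Marcinkiewicz endpoint versions, provides strong $(p,p)$ boundedness in the open range $(p_0, p_1)$ as well as the analogous weak and restricted weak endpoint estimates. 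The main technical task is the endpoint analysis (strong type fails at both endpoints due to the logarithmic divergence of the relevant Schur integrals); the open-interval strong bounds reduce to a routine Schur argument.
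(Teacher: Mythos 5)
Your argument is correct in substance but follows a genuinely different route from the paper. The paper does not decompose the kernel at all: it observes that the short-time bounds of Theorem \ref{thm:main} coincide with the sharp bounds for the Bessel heat kernel $\widetilde{W}_t^{\nu+1/2}(x,y)$ on $((0,\infty),dx)$, so that $T_*^{\nu,1/2}$ (resp.\ $T_{*,\mathrm{loc}}^{\nu,1/2}$) is pointwise controlled by the Bessel maximal operator $\widetilde{W}_*^{\nu+1/2}$, and then simply imports the known mapping properties of the latter from \cite[Remark 3.2]{BHNV}. You instead majorize $\sup_{0<t\le 1}$ directly by $Mf+R_1f+R_2f$ via the splitting $[xy/t\wedge 1]^{\nu+1/2}\le 1+(xy/t)^{\nu+1/2}$ and the optimization in $t$; your computations (the critical point $t^*=(x-y)^2/(4(\nu+1))$, the identification $\alpha=-2\nu-1\in(0,1)$, the duality $(p_0)'=p_1$, and the complete Lorentz-space analysis of the rank-one piece $R_2$) are all correct, as is the elementary treatment of the $t\ge 1$ part in case (a). What your approach buys is self-containedness at the level of classical operators on $(0,1)$, essentially re-deriving in sketch what \cite{BHNV} proves for the Bessel maximal operator; what the paper's approach buys is brevity and, as it notes, access to more general weighted results. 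The one soft spot in your write-up is exactly where you flag it: the weak type $(p_1,p_1)$ and restricted weak type $(p_0,p_0)$ estimates for the weighted fractional integral $R_1$ are asserted via ``Marcinkiewicz endpoint versions'' of Stein--Weiss rather than proved; these endpoint statements are true but are precisely the nontrivial content being outsourced (in the paper, to \cite{BHNV}), so to make your proof complete you would either need to carry out that endpoint analysis or cite a specific endpoint Stein--Weiss result.
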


\begin{proof}
We shall relate our maximal operators to the maximal operator of the Bessel semigroup corresponding to the measure space
$((0,\infty),dx)$ and use known results in the Bessel setting for which we use notation consistent with \cite{BHNV}.

The Bessel heat kernel $\widetilde{W}_t^{\lambda}(x,y)$ satisfies the sharp bounds
$$
\widetilde{W}_{t}^{\lambda}(x,y) \simeq \bigg[ \frac{xy}t \wedge 1\bigg]^{\lambda} \frac{1}{\sqrt{t}}
	\exp\bigg( -\frac{(x-y)^2}{4t}\bigg), \qquad x,y,t>0,
$$
which are a direct consequence of a closed formula for the kernel in terms of $I_{\lambda-1/2}$ (see e.g.\ \cite{BHNV})
and basic asymptotics \eqref{as1}, \eqref{as2} for the modified Bessel function.
Thus, in view of Theorem \ref{thm:main}, we have
$$
G_t^{\nu,1/2}(x,y) \simeq \widetilde{W}_t^{\nu+1/2}(x,y), \qquad x,y \in (0,1), \quad 0 < t \le 1.
$$
Moreover, for $\nu > -1/2$,
$$
G_t^{\nu,1/2}(x,y) \lesssim \widetilde{W}_t^{\nu+1/2}(x,y), \qquad x,y \in (0,1), \quad t > 0,
$$
and for $\nu=-1/2$ we have the simple large-time behavior $G_t^{\nu,1/2}(x,y) \simeq 1$ for $x,y \in (0,1)$ and $t \ge 1$.

Passing to maximal operators, we can restrict attention to $f \ge 0$. Then we see that $T_{*}^{\nu,1/2}f$ is controlled by
$\widetilde{W}_{*}^{\nu+1/2}f$ when $\nu \ge -1/2$ (to be precise, by $\widetilde{W}_{*}^{\nu+1/2}f$ plus a trivial operator
in case $\nu=-1/2$). Likewise, $T_{*,\textrm{loc}}^{\nu}f$ is controlled by $\widetilde{W}_{*}^{\nu+1/2}f$ for each $\nu > -1$.

Now the mapping properties stated in the theorem can be concluded from analogous results for $\widetilde{W}_*^{\nu+1/2}$
that can be found in \cite[Remark 3.2]{BHNV}.
\end{proof}

One can show that Theorem \ref{thm:max} is optimal in the sense that the mapping properties stated are best possible among
strong, weak and restricted weak type estimates. Also, from the results in \cite{BHNV} one could infer more general weighted
mapping properties of $T_*^{\nu,1/2}$ and $T_{*,\textrm{loc}}^{\nu,1/2}$,
but we shall not pursue these matters here.

As a natural consequence of Theorem \ref{thm:max} we get the following.
\begin{corollary} \label{cor:conv}
The boundary convergence
$$
T_t^{\nu,1/2}f(x) \longrightarrow f(x) \quad \textrm{a.e.}\;\; x \in (0,1) \quad \textrm{as} \quad t \to 0^+
$$
holds for $f \in L^1((0,1),dx)$ in case $\nu \ge -1/2$ and for $f \in L^p((0,1),dx)$, $p > p_0$, in case $-1 < \nu < -1/2$;
here $p_0$ is the same as in Theorem \ref{thm:max}.
\end{corollary}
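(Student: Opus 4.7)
The plan is to deduce the corollary from Theorem~\ref{thm:max} via the standard density argument. By Banach's continuity principle, the weak type bound supplied by Theorem~\ref{thm:max} (using the local maximal operator $T_{*,\textrm{loc}}^{\nu,1/2}$ in case~(b), which suffices since only $t \to 0^+$ is relevant) implies that the set
\[
\mathcal{C} := \bigl\{ f : T_t^{\nu,1/2}f(x) \to f(x) \;\; \textrm{a.e.\ as}\;\; t \to 0^+\bigr\}
\]
is closed in the ambient $L^p$ norm. It therefore suffices to find a dense subset $\mathcal{D} \subset \mathcal{C}$.

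For $\mathcal{D}$ I would take $C_c^\infty((0,1))$, which is trivially dense in $L^p((0,1),dx)$ for every $1 \le p < \infty$. For $g \in \mathcal{D}$ with $\support g \subset [a,b] \subset (0,1)$, the singular coefficient $(\nu^2-1/4)/x^2$ in $\mathbb{L}^\nu$ is smooth on $[a,b]$, hence $(\mathbb{L}^\nu)^k g \in C_c^\infty((0,1)) \subset \domain \widetilde{\mathbb{L}}^{\nu,1/2}$ for every $k \ge 0$. Combining this with the relation $\langle \mathbb{L}^\nu g, \psi_n^{\nu,1/2}\rangle = (-1)^{\chi_{\{n=0\}}}(\z_n^{\nu,1/2})^2 \langle g, \psi_n^{\nu,1/2}\rangle$ valid for such $g$ (noted just after the definition of $\widetilde{\mathbb{L}}^{\nu,H}$) and iterating via Parseval's identity, the Fourier-Dini coefficients $\langle g, \psi_n^{\nu,1/2}\rangle$ decay faster than any power of $\z_n^{\nu,1/2} \simeq n$ (by \eqref{azer}). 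Together with the bound $|\psi_n^{\nu,1/2}(x)| \lesssim 1$ uniformly in $n$ and in $x$ on any compact subset $K \subset (0,1)$ -- a consequence of \eqref{as2} and the growth $c_n^{\nu,1/2} = O(n^{1/2})$ -- the spectral expansion of $T_t^{\nu,1/2}g - g$ converges absolutely and uniformly on $K \times [0,T]$ and vanishes as $t \to 0^+$. Hence $\mathcal{D} \subset \mathcal{C}$.

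The range $p > p_1$ in case~(b) is covered by the inclusion $L^p((0,1),dx) \subset L^{p_1}((0,1),dx)$ (finite measure): any $f \in L^p$ is approximated in $L^p$ (hence in $L^{p_1}$) by $g \in C_c^\infty((0,1))$, and the weak type $(p_1,p_1)$ bound from Theorem~\ref{thm:max}(b) provides the quantitative control needed to close the argument. The only mildly non-routine ingredient is the uniform bound on $|\psi_n^{\nu,1/2}(x)|$, which requires tracking the asymptotic size of $c_n^{\nu,1/2}$ against the large-argument behaviour of $J_\nu$ from \eqref{as2}; all the required data are explicit in Section~\ref{sec:Bes}, so this step is routine.
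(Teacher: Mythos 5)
Your proposal is correct and follows the same standard scheme as the paper: a weak type bound for the (local) maximal operator from Theorem~\ref{thm:max} plus convergence on a dense subclass, with the nesting of $L^p$ spaces over the finite measure space handling the remaining exponents. The only substantive difference is the choice of dense class. The paper takes $\spann\{\psi_n^{\nu,1/2}\}$, which is linearly dense in $L^2((0,1),dx)$ and hence in the relevant $L^p$ spaces; for such $g$ the convergence $T_t^{\nu,1/2}g \to g$ is immediate, since $T_t^{\nu,1/2}\psi_n^{\nu,1/2} = \exp(-t(-1)^{\chi_{\{n=0\}}}(\z_n^{\nu,1/2})^2)\,\psi_n^{\nu,1/2}$ converges pointwise everywhere as $t\to 0^+$. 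Your choice of $C_c^{\infty}((0,1))$ also works, but it forces you to prove the rapid decay of the Fourier--Dini coefficients and a uniform bound for $\psi_n^{\nu,1/2}$ on compacta, which the paper's choice renders unnecessary. On that last point, one step of yours is stated too casually: the bound $c_n^{\nu,1/2}=O(n^{1/2})$ requires a \emph{lower} bound $|J_{\nu}(\z_n^{\nu,1/2})|\gtrsim n^{-1/2}$, which does not follow from the upper estimate \eqref{as2}; it follows instead from the full oscillatory asymptotics of $J_{\nu}$ (e.g.\ from $J_{\nu}(x)^2+J_{\nu+1}(x)^2\sim \frac{2}{\pi x}$ combined with the relation $(\nu+1/2)J_{\nu}(\z_n^{\nu,1/2})=\z_n^{\nu,1/2}J_{\nu+1}(\z_n^{\nu,1/2})$ coming from $J_{\nu,1/2}(\z_n^{\nu,1/2})=0$). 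This is standard but is genuinely extra input beyond what Section~\ref{sec:Bes} records, and it is precisely the kind of verification the paper's dense class avoids.
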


\begin{proof}
The arguments are standard, see e.g.\ \cite[Chap.\,2, Sec.\,2]{Duo}.
Notice that $L^1((0,1),dx)$ is the biggest space among $L^p((0,1),dx)$, $p \ge 1$.
The crucial ingredients of the proof are the maximal Theorem \ref{thm:max} and the fact that the Fourier-Dini system is linearly dense
in $L^2((0,1),dx)$, hence also in the $L^p$ spaces in question.
Further details are left to the reader.
\end{proof}

%%%%%%%%%%%%%%%%%%%%%%%%%%%%%%%%%%%%%%%%
\subsection{Sharp bounds for the Fourier-Dini-Poisson kernel}
Denote by $H_t^{\nu,1/2}(x,y)$ the integral kernel of the Fourier-Dini-Poisson semigroup, i.e.\ the semigroup generated by
$-(\widetilde{\mathbb{L}}^{\nu,1/2})^{1/2}$. Here we assume that $\nu \ge -1/2$,
since otherwise $\widetilde{\mathbb{L}}^{\nu,1/2}$ is not non-negative.
For $\nu \in (-1,-1/2)$, and more generally for $\nu > -1$, we consider the kernel $H_{t,\to}^{\nu,1/2}(x,y)$ of the semigroup
generated by $-(d_{\nu}^2 \id + \widetilde{\mathbb{L}}^{\nu,1/2})^{1/2}$, where $d_{\nu} \ge \z_0^{\nu,1/2}$ so that
the shifted Fourier-Dini Laplacian $d^2_{\nu} \id + \widetilde{\mathbb{L}}^{\nu,1/2}$ is non-negative.
For instance, $d_{\nu}=1$ is a good choice since $\z_0^{\nu,1/2} < 1/2$ for $\nu \in (-1,-1/2)$;
see Proposition \ref{prop:eest} below. When $\nu \ge -1/2$, one of course can take $d_{\nu}=0$ and then
$H_{t,\to}^{\nu,1/2}(x,y) = H_{t}^{\nu,1/2}(x,y)$.

The following sharp bounds can be obtained with the aid of Theorem \ref{thm:main}.
\begin{theorem} \label{thm:poisson}
Assume that $\nu > -1$. Given any $T > 0$,
$$
H_{t,\to}^{\nu,1/2}(x,y) \simeq \bigg( \frac{\sqrt{xy}}{t+x+y}\bigg)^{2\nu+1} \frac{t}{t^2+(x-y)^2}
$$
uniformly in $x,y \in (0,1)$ and $0 < t \le T$. The same bounds hold for $H_{t}^{\nu,1/2}(x,y)$ when $\nu \ge -1/2$.
Moreover,
$$
H_{t,\to}^{\nu,1/2}(x,y) \simeq (xy)^{\nu+1/2} \times
	\begin{cases}
		\exp\Big( -t\sqrt{\big(\z_{1}^{\nu,H}\big)^2+d_{\nu}^2}\;\Big), & \;\; \textrm{if} \;\; \nu > -1/2, \\
		\exp(-t d_{\nu}), & \;\; \textrm{if} \;\; \nu = -1/2, \\
		\exp\Big( -t\sqrt{d_{\nu}^2-\big(\z_{0}^{\nu,H}\big)^2}\;\Big), & \;\; \textrm{if} \;\; \nu < -1/2,
	\end{cases}
$$
uniformly, in $x,y \in (0,1)$ and $t \ge T$.
\end{theorem}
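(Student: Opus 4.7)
The natural approach is subordination. The Poisson semigroup generated by $-(d_\nu^2\id+\widetilde{\mathbb{L}}^{\nu,1/2})^{1/2}$ is obtained from the (shifted) heat semigroup via the classical formula
$$
H_{t,\to}^{\nu,1/2}(x,y) = \int_0^\infty \frac{t\,e^{-t^2/(4s)}}{2\sqrt{\pi}\,s^{3/2}}\, e^{-s d_\nu^2}\, G_s^{\nu,1/2}(x,y)\,ds,
$$
so the whole argument reduces to plugging in the sharp bounds from Theorem \ref{thm:main} and Proposition \ref{prop:tlarge} and evaluating the resulting one-dimensional integrals.

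For the bounded-time regime $0<t\le T$, I would split the integral at $s=1$. On $s\le 1$, Theorem \ref{thm:main} gives $G_s^{\nu,1/2}(x,y)\simeq \widetilde{W}_s^{\nu+1/2}(x,y)$ uniformly in $x,y\in(0,1)$, where $\widetilde{W}_s^{\nu+1/2}$ is the Bessel heat kernel on $(0,\infty)$ used in the proof of Theorem \ref{thm:max}. Inserting these bounds and absorbing the bounded factor $e^{-sd_\nu^2}$ leaves, up to constants, the standard subordination integral producing the Bessel Poisson kernel $\widetilde{P}_t^{\nu+1/2}(x,y)$ on $(0,\infty)$, whose sharp estimate
$$
\widetilde{P}_t^{\nu+1/2}(x,y)\simeq \bigg(\frac{\sqrt{xy}}{t+x+y}\bigg)^{2\nu+1}\frac{t}{t^2+(x-y)^2}
$$
is the one claimed in the theorem; this can either be cited from the Bessel Poisson literature (in the spirit of \cite{BHNV}) or verified directly by splitting the $s$-integral at $s=xy$ and computing each piece via the substitution $u=(t^2+(x-y)^2)/(4s)$, reducing it to elementary $\Gamma$-type integrals. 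For the tail $s>1$, Proposition \ref{prop:tlarge} combined with the factor $e^{-sd_\nu^2}$ (recall $d_\nu^2\ge (\lambda_0^{\nu,1/2})^2$, so the exponent is nonpositive) yields a contribution bounded by $t(xy)^{\nu+1/2}\,e^{-t^2/4}$, which is easily absorbed into the claimed right-hand side for $0<t\le T$.

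For the large-time regime $t\ge T$, I would abandon subordination and work directly from the spectral representation
$$
H_{t,\to}^{\nu,1/2}(x,y) = \sum_{n\ge \chi_{\{\nu>-1/2\}}} \exp\bigl(-t\gamma_n\bigr)\,\psi_n^{\nu,1/2}(x)\,\psi_n^{\nu,1/2}(y),
$$
where $\gamma_n=\sqrt{(\lambda_n^{\nu,1/2})^2+d_\nu^2}$ for $n\ge 1$ and, in the case $\nu<-1/2$, $\gamma_0=\sqrt{d_\nu^2-(\lambda_0^{\nu,1/2})^2}$. Using \eqref{azer} and the asymptotics \eqref{as1}, \eqref{as2} for $J_\nu$ and $I_\nu$ one shows (exactly as in Proposition \ref{prop:tlarge}) that the first nonzero summand dominates and that $\psi_n^{\nu,1/2}(x)\simeq x^{\nu+1/2}$ uniformly in $x\in(0,1)$ for the relevant leading eigenfunction (which is a nondegenerate ground state of the shifted operator). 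The three cases $\nu>-1/2$, $\nu=-1/2$, $\nu<-1/2$ simply read off which eigenvalue $\gamma_n$ is smallest, giving the stated exponential rates.

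The main obstacle is the first half of the bounded-time step: checking that the tail of the subordination integral over $s>1$ is genuinely negligible compared to the main term $(\sqrt{xy}/(t+x+y))^{2\nu+1}\,t/(t^2+(x-y)^2)$ uniformly in $x,y\in(0,1)$ and $0<t\le T$, with $\nu<-1/2$ being the delicate case because $G_s^{\nu,1/2}$ itself grows exponentially in $s$ there. The shift by $d_\nu^2$ is tailored precisely to compensate this growth, and the verification requires comparing $t(xy)^{\nu+1/2}$ against the lower bound of the main term in the three geometric regimes $x+y\lesssim t$, $|x-y|\gtrsim t$, and the intermediate one. The rest of the argument is a careful but routine computation with the subordinator.
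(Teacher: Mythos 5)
Your argument is correct and follows essentially the same route as the paper: subordination of the sharp heat kernel bounds from Theorem \ref{thm:main}, the only difference being that you compare the bounded-time part against the half-line Bessel--Poisson kernel, whereas the paper invokes the Jacobi--Poisson kernel bounds through the proof of \cite[Theorem 2.5]{NR3} (and folds the large-time regime into that citation rather than redoing the spectral-series argument of Proposition \ref{prop:tlarge}). One cosmetic slip: for $s\ge 1$ one has $e^{-t^2/(4s)}\le 1$ rather than $\le e^{-t^2/4}$, so the tail contribution should be bounded by $t(xy)^{\nu+1/2}$ without the Gaussian factor --- harmless here since $t\le T$.
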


\begin{proof}
Clearly, the kernels of the semigroups generated by $-\widetilde{\mathbb{L}}^{\nu,1/2}$ and
$-(d_{\nu}^2 \id + \widetilde{\mathbb{L}}^{\nu,1/2})$ have the same short-time bounds stated in Theorem \ref{thm:main}.
Further, the large-time bounds of Theorem \ref{thm:main} instantly imply analogous bounds for the kernel of
$\exp(-(d_{\nu}^2 \id + \widetilde{\mathbb{L}}^{\nu,1/2}))$, where the difference is that one has to multiply by the factor
$\exp(-t d_{\nu}^2)$.

Now we are in a position where the arguments from the proof of \cite[Theorem 2.5]{NR3} apply, since the short-time bounds
of Theorem \ref{thm:main} are exactly the same as the analogous bounds for the Jacobi heat kernel
with $\a=\nu$ and $\b = -1/2$, cf.\ Theorem \ref{thm:jac}.
Roughly speaking, the idea relies on employing known sharp bounds for the Jacobi-Poisson kernel obtained in \cite[Theorem A.1]{NoSj}
and \cite[Theorem 6.1]{NSS}.
We omit the details.
\end{proof}

We now return to estimating the size of $\z_0^{\nu,1/2}$.
\begin{proposition} \label{prop:eest}
One has the bound
$$
\z_0^{\nu,1/2} < 1/2, \qquad \nu \in (-1,-1/2).
$$
\end{proposition}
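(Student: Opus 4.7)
The plan is to reduce the claim to a sign evaluation of $I_{\nu,1/2}(1/2)$. As noted after \eqref{id66}, $\lambda_0^{\nu,1/2}$ is the unique positive zero of $I_{\nu,1/2}$. Using \eqref{id66} together with the asymptotics \eqref{as1}--\eqref{as2}, one sees that $I_{\nu,1/2}(x)\to -\infty$ as $x\to 0^+$ (the leading contribution $(\nu+1/2)I_\nu(x)$ is forced negative by $\nu+1/2<0$) while $I_{\nu,1/2}(x)\to +\infty$ as $x\to\infty$. Hence any $x_0>0$ at which $I_{\nu,1/2}(x_0)>0$ lies strictly above $\lambda_0^{\nu,1/2}$; I will take $x_0=1/2$.

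To check $I_{\nu,1/2}(1/2)>0$, I would insert the standard Frobenius series for $I_\nu$ and use the identity $x\tfrac{d}{dx}(x/2)^{\nu+2m}=(\nu+2m)(x/2)^{\nu+2m}$ to obtain
\[
I_{\nu,1/2}(x)=\sum_{m=0}^{\infty}\bigl(\nu+2m+\tfrac12\bigr)\,T_m(x),\qquad T_m(x):=\frac{(x/2)^{\nu+2m}}{m!\,\Gamma(\nu+m+1)}.
\]
For $\nu\in(-1,-1/2)$ every Gamma factor is positive, so each $T_m(x)>0$, and the scalar $\nu+2m+1/2$ is negative only for $m=0$ while being at least $3/2$ for $m\ge 1$. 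Thus only the $m=0$ summand contributes negatively.

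The concluding step is to dominate the $m=0$ term by the $m=1$ term at $x=1/2$; the nonnegativity of the tail $\sum_{m\ge 2}$ then closes the argument. Since $T_1(1/2)/T_0(1/2)=1/[16(\nu+1)]$, positivity of the two-term sum is, after multiplication by the positive factor $16(\nu+1)$, equivalent to the quadratic inequality
\[
16\nu^2+25\nu+\tfrac{21}{2}>0,
\]
whose discriminant $625-672=-47$ is negative. The quadratic is therefore strictly positive on all of $\mathbb{R}$, yielding $I_{\nu,1/2}(1/2)>0$ as required.

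I do not foresee any serious obstacle; the entire argument collapses to the elementary quadratic above. The one point worth flagging is that one must retain the $m=1$ correction: truncating the series at $m=0$ alone would give the misleading (negative) sign of the leading Frobenius singularity, so the delicate balancing happens precisely between the first two Taylor coefficients.
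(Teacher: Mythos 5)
Your argument is correct, and it takes a genuinely different route from the paper. The paper rewrites $I_{\nu,1/2}(x)=0$ as $I_{\nu+1}(x)/I_{\nu}(x)=-(\nu+1/2)/x$, invokes N\aa sell's rational lower bound for the ratio $I_{\nu+1}/I_{\nu}$, and solves the resulting algebraic equation explicitly for its first positive root, checking that it lies below $1/2$. You instead work directly with the Frobenius series of $I_{\nu,1/2}$: the sign analysis of the coefficients $(\nu+2m+1/2)/[m!\,\Gamma(\nu+m+1)]$ is right (only $m=0$ is negative for $\nu\in(-1,-1/2)$), the ratio $T_1(1/2)/T_0(1/2)=(1/4)^2/(\nu+1)=1/[16(\nu+1)]$ is computed correctly, and the reduction to $16\nu^2+25\nu+21/2>0$ with discriminant $625-672=-47<0$ is verified. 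Combined with the sign of $I_{\nu,1/2}$ near $0^+$ and at $\infty$ and the uniqueness of the positive zero (stated in Section \ref{sec:Bes}), this correctly yields $\z_0^{\nu,1/2}<1/2$. Your approach buys self-containedness --- it avoids the external reference to N\aa sell's inequality entirely and reduces everything to a two-term truncation of the power series plus a discriminant check --- at the cost of being tied to the specific evaluation point $x=1/2$; the paper's route, by contrast, produces an explicit upper bound $x_0^{\nu}$ for $\z_0^{\nu,1/2}$ as a function of $\nu$, which is slightly more informative if one cares about how the zero varies.
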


\begin{proof}
Let $\nu \in (-1,-1/2)$ and observe that $\z_0^{\nu,1/2}$ is the only strictly positive root of the equation
$$
\frac{I_{\nu+1}(x)}{I_{\nu}(x)} = - \frac{\nu+1/2}{x}.
$$
By \cite[Theorem 3(a)]{nasell}, see also \cite[p.\,11, formula for $L_{\nu,0,1}$]{nasell}, the left-hand side here can be
estimated from below
$$
\frac{I_{\nu+1}(x)}{I_{\nu}(x)} > \frac{2(\nu+2)x}{4(\nu+1)(\nu+2)+x^2} =: U(x), \qquad x > 0.
$$
Then the (first) positive root of the equation $U(x) = -(\nu+1/2)/x$, call it $x_0^{\nu}$, is bigger than $\z_0^{\nu,1/2}$.
Solving the last equation for positive roots we get
$$
x_0^{\nu} = \frac{2}{3}\sqrt{- \frac{6\nu^3 + 21\nu^2 + 21\nu + 6}{2\nu+3}}.
$$
Using this explicit expression it is elementary to check that $x_0^{\nu} < 1/2$ for $\nu$ under consideration.
Since $\z_0^{\nu,1/2} < x_0^{\nu}$, the conclusion follows.
\end{proof}

%%%%%%%%%%%%%%%%%%%%%%%%%%%%%%%%%%%%%%%%
\subsection{Sharp bounds for Fourier-Dini potential kernels}
Let $\sigma > 0$.
Denote by $A_{\sigma}^{\nu,1/2}(x,y)$ the integral kernel associated with the Riesz potential operator
$(\widetilde{\mathbb{L}}^{\nu,1/2})^{-\sigma}$. Here the negative power makes sense only for $\nu > -1/2$ since otherwise
$\widetilde{\mathbb{L}}^{\nu,1/2}$ has an eigenvalue that is not strictly positive.
Further, denote by $B_{\sigma}^{\nu,1/2}(x,y)$ the integral kernel associated with the Bessel potential
$(\id + \widetilde{\mathbb{L}}^{\nu,1/2})^{-\sigma}$, which is well defined for all $\nu > -1$.
Both the potential operators are defined initially in $L^2((0,1),dx)$ via the spectral theorem.
The potential kernels express as
$$
A_{\sigma}^{\nu,1/2}(x,y) = \frac{1}{\Gamma(2\sigma)}\int_0^{\infty} H_t^{\nu,1/2}(x,y) t^{2\sigma-1}\, dt, \qquad
B_{\sigma}^{\nu,1/2}(x,y) = \frac{1}{\Gamma(2\sigma)}\int_0^{\infty} H_{t,\to}^{\nu,1/2}(x,y) t^{2\sigma-1}\, dt,
$$
where $x,y \in (0,1)$ and $H_{t,\to}^{\nu,1/2}(x,y)$ comes with the choice $d_{\nu}=1$; see e.g.\ \cite[(2)]{NR3}.

The following sharp estimates hold.
\begin{theorem} \label{thm:potk}
Let $\nu > -1$ and $\sigma > 0$ be fixed. Then
\begin{align*}
(xy)^{-\nu-1/2} B_{\sigma}^{\nu,1/2}(x,y) & \simeq 1 + \chi_{\{\sigma=\nu+1\}} \log\frac{2}{x+y}
	+ \chi_{\{\sigma = 1/2\}} \log\frac{2}{2-x-y} \\
& \quad + (x+y)^{2\sigma-2(\nu+1)} \times
	\begin{cases}
		(2-x-y)^{2\sigma-1}, & \;\; \textrm{if} \;\; \sigma > 1/2, \\
		\log\frac{(x+y)(2-x-y)}{|x-y|},& \;\; \textrm{if} \;\; \sigma=1/2, \\
		\big( \frac{x+y}{|x-y|} \big)^{1-2\sigma}, & \;\; \textrm{if} \;\; \sigma < 1/2,
	\end{cases}
\end{align*}
uniformly in $x,y \in (0,1)$. The same holds with $B_{\sigma}^{\nu,1/2}$ replaced by $A_{\sigma}^{\nu,1/2}$
provided that $\nu > -1/2$.
\end{theorem}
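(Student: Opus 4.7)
The starting point is the subordination representation
$$B_\sigma^{\nu,1/2}(x,y) = \frac{1}{\Gamma(2\sigma)}\int_0^\infty t^{2\sigma-1} H_{t,\to}^{\nu,1/2}(x,y)\,dt,$$
and its analogue for $A_\sigma^{\nu,1/2}$ (valid when $\nu > -1/2$). The plan is to split the time integral at some fixed $T>0$ and invoke the sharp Poisson kernel bounds of Theorem \ref{thm:poisson} on each piece.

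For $t\ge T$, Theorem \ref{thm:poisson} provides exponential decay in $t$. Here Proposition \ref{prop:eest} together with the choice $d_\nu=1$ guarantees a positive rate $\sqrt{d_\nu^2-(\z_0^{\nu,1/2})^2}>0$ when $\nu\in(-1,-1/2)$, while the cases $\nu\ge -1/2$ are covered by the strict positivity of $\z_1^{\nu,1/2}$. Consequently $\int_T^\infty t^{2\sigma-1} H_{t,\to}^{\nu,1/2}(x,y)\,dt \simeq (xy)^{\nu+1/2}$ uniformly in $x,y\in(0,1)$; after multiplication by $(xy)^{-\nu-1/2}$ this yields the constant ``$1$'' appearing on the right-hand side of the theorem. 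The Riesz potential $A_\sigma^{\nu,1/2}$ is treated analogously, using the exponential decay of $H_t^{\nu,1/2}$ at rate $\z_1^{\nu,1/2}>0$.

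For $t \in (0,T)$, the plan is to apply the short-time bound
$$H_{t,\to}^{\nu,1/2}(x,y) \simeq \Big(\frac{\sqrt{xy}}{t+x+y}\Big)^{2\nu+1}\frac{t}{t^2+(x-y)^2}$$
and factor out $(xy)^{\nu+1/2}$, thereby reducing the task to evaluating
$$J(s,r):=\int_0^T \frac{t^{2\sigma}}{(t+s)^{2\nu+1}(t^2+r^2)}\,dt, \qquad s:=x+y,\quad r:=|x-y|,$$
subject to the geometric constraints $0\le r\le \min(s,2-s)$ and $s\in(0,2)$. Choosing $T>2$ and splitting $(0,T)$ into the three natural subranges $(0,r)$, $(r,s)$, and $(s,T)$, on which the integrand is comparable respectively to $t^{2\sigma}/(s^{2\nu+1}r^2)$, $t^{2\sigma-2}/s^{2\nu+1}$, and $t^{2\sigma-2\nu-3}$, each piece can be integrated in closed form. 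The critical exponents $2\sigma-1=0$ (i.e.\ $\sigma=1/2$) and $2\sigma-2\nu-2=0$ (i.e.\ $\sigma=\nu+1$) give rise to the logarithmic terms appearing in the theorem.

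The main obstacle is organizing this case analysis compactly and matching the resulting sum of contributions with the specific closed-form right-hand side of the theorem---in particular, producing the $\log(2/(2-x-y))$ factor at $\sigma=1/2$, which has to be extracted by tracking carefully how the $\log(s/r)/s^{2\nu+1}$ coming from the $(r,s)$-piece behaves in the regime where $r$ is close to its upper bound $2-s$ (i.e.\ where one of $x,y$ is near the right endpoint). Once this has been established for $B_\sigma^{\nu,1/2}$, the claim for $A_\sigma^{\nu,1/2}$ (when $\nu>-1/2$) follows immediately, since both kernels arise from Poisson kernels with matching short-time asymptotics, and the respective large-time integrals both produce a contribution comparable to $(xy)^{\nu+1/2}$.
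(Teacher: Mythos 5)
Your plan is correct, but it takes a different route from the paper. The paper's proof is a two-line transference argument: it observes that the short-time bounds of Theorem \ref{thm:poisson} coincide exactly with those of the Jacobi--Poisson kernel for $\alpha=\nu$, $\beta=-1/2$, and that the large-time bounds are exponentially decaying, and then simply cites the known sharp Jacobi potential kernel estimates of Nowak--Roncal \cite[Theorem 2.2]{NR3}. You instead re-derive the estimate from scratch by inserting the Poisson bounds into the subordination integral, splitting at a fixed $T>2$, and evaluating $J(s,r)=\int_0^T t^{2\sigma}(t+s)^{-2\nu-1}(t^2+r^2)^{-1}\,dt$ over the subranges $(0,r)$, $(r,s)$, $(s,T)$. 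Your pieces are identified correctly: the tail integral and the $(s,T)$ piece each contribute $\gtrsim 1$ (or $\log(2/s)$ when $\sigma=\nu+1$), the $(0,r)$ and $(r,s)$ pieces produce $s^{2\sigma-2\nu-2}$ times the $r$-dependent factors, and the logarithms appear precisely at the critical exponents $\sigma=1/2$ and $\sigma=\nu+1$. The point you flag as the main obstacle --- reconciling your output with the $(2-x-y)$ factors in the stated formula --- is indeed only a matter of rewriting: since $r\le\min(s,2-s)$, one checks for instance that $1+\log\tfrac{2}{2-s}+s^{-2\nu-1}\log\tfrac{s(2-s)}{r}\simeq s^{-2\nu-1}\bigl(1+\log\tfrac{s}{r}\bigr)+\chi_{\{\nu<-1/2\}}$, so nothing beyond elementary (if tedious) case checking remains. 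The trade-off is clear: the paper's argument is shorter and reuses published work, while yours is self-contained and makes visible where each term of the right-hand side comes from; in effect you are reproducing the computation underlying \cite[Theorem 2.2]{NR3} in the special case $\beta=-1/2$.
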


\begin{proof}
From Theorem \ref{thm:poisson}, or in fact from its proof, we know that the short-time behavior of the
Fourier-Dini-Poisson kernels is exactly
the same as in the Jacobi setting from Section \ref{ssec:jac} with parameters $\alpha=\nu$ and $\beta=-1/2$.
This, together with the large-time bounds from Theorem \ref{thm:poisson} (the exact values of positive constants in the exponential
factors are irrelevant), implies that the sharp bounds for $A_{\sigma}^{\nu,1/2}(x,y)$ and $B_{\sigma}^{\nu,1/2}(x,y)$
are the same as in the Jacobi framework with parameters $\alpha=\nu$ and $\beta=-1/2$.
Therefore, using \cite[Theorem 2.2]{NR3}, see also \cite[(5)]{NR3}, we arrive at the desired conclusion.
\end{proof}

%%%%%%%%%%%%%%%%%%%%%%%%%%%%%%%%%%%%%%%%
\subsection{Sharp $L^p-L^q$ mapping properties of Fourier-Dini potential operators}
For $\nu > -1/2$ and for $\nu > -1$ consider, respectively, the integral operators
$$
A_{\sigma}^{\nu,1/2}f(x) = \int_0^1 A_{\sigma}^{\nu,1/2}(x,y) f(y)\, dy \qquad \textrm{and} \qquad
B_{\sigma}^{\nu,1/2}f(x) = \int_0^1 B_{\sigma}^{\nu,1/2}(x,y) f(y)\, dy
$$
on their natural domains $\domain A_{\sigma}^{\nu,1/2}$ and $\domain B_{\sigma}^{\nu,1/2}$ consisting of all functions $f$ on $(0,1)$
for which the respective integrals converge for a.e.\ $x \in (0,1)$. Using Theorem \ref{thm:potk} it is
straightforward to check that, when $\nu > -1/2$, $\domain A_{\sigma}^{\nu,1/2}$ contains all $L^p((0,1),dx)$ spaces,
$1\le p \le \infty$. The same is true about $\domain B_{\sigma}^{\nu,1/2}$ if $\nu \ge -1/2$. If $\nu \in (-1,-1/2)$, then
$L^p((0,1),dx) \subset \domain B_{\sigma}^{\nu,1/2}$ provided that $1/p < \nu + 3/2$.
Note that $L^2((0,1),dx)$ is always contained in both domains, and in $L^2((0,1),dx)$ the integral operators coincide with the
corresponding negative powers of $\widetilde{\mathbb{L}}^{\nu,1/2}$ and $\id + \widetilde{\mathbb{L}}^{\nu,1/2}$, respectively,
defined spectrally.

The following result gives a complete and sharp description of $L^p-L^q$ mapping properties of $A_{\sigma}^{\nu,1/2}$
and $B_{\sigma}^{\nu,1/2}$.
\begin{theorem} \label{thm:pot}
Let $\nu > -1$, $\sigma > 0$ and $1 \le p,q \le \infty$.
Then $B_{\sigma}^{\nu,1/2}$ has the following mapping properties with respect to the measure space $((0,1),dx)$.
\begin{itemize}
\item[(a)]
Assume that $\nu \ge -1/2$.
If $\sigma > 1/2$, then $B_{\sigma}^{\nu,1/2}$ is of strong type $(p,q)$ for all $p$ and $q$.
If $\sigma = 1/2$, then $B_{\sigma}^{\nu,1/2}$ is of strong type $(p,q) \neq (1,\infty)$.
If $\sigma < 1/2$, then $B_{\sigma}^{\nu,1/2}$ is of strong type $(p,q)$ provided that
$$
\frac{1}q \ge \frac{1}{p} - 2\sigma \qquad \textrm{and} \qquad (p,q) \notin
	\bigg\{ \Big(1,\frac{1}{1-2\sigma}\Big), \Big(\frac{1}{2\sigma},\infty\Big)\bigg\};
$$
moreover, $B_{\sigma}^{\nu,1/2}$ is of weak type $(1,\frac{1}{1-2\sigma})$ and of restricted weak type $(\frac{1}{2\sigma},\infty)$.
\item[(b)]
Assume that $\nu \in (-1,-1/2)$.
If $\sigma > \nu+1$, then $B_{\sigma}^{\nu,1/2}$ is of strong type $(p,q)$ provided that $1/p < \nu+3/2$ and
$1/q > -\nu-1/2$; furthermore, $B_{\sigma}^{\nu,1/2}$ is of weak type $(p,\frac{1}{-\nu-1/2})$ for
$1/p < \nu+3/2$ and of restricted weak type $(\frac{1}{\nu+3/2},q)$ for $1/q \ge -\nu-1/2$.
If $\sigma = \nu+1$, then $B_{\sigma}^{\nu,1/2}$ has the mapping properties as in the case $\sigma > \nu + 1$, except for that
it is not of restricted weak type $(\frac{1}{\nu+3/2}, \frac{1}{-\nu-1/2})$.
If $\sigma < \nu +1$, then $B_{\sigma}^{\nu,1/2}$ is of strong type $(p,q)$ when
$$
\frac{1}p < \nu + \frac{3}2 \qquad \textrm{and} \qquad \frac{1}q > -\nu-\frac{1}2 \qquad \textrm{and} \qquad
	\frac{1}q \ge \frac{1}{p} - 2\sigma;
$$
further, $B_{\sigma}^{\nu,1/2}$ is of weak type $(p,\frac{1}{-\nu-1/2})$ for $1/p < 2\sigma - \nu-1/2$ and of
restricted weak type $(\frac{1}{2\sigma-\nu-1/2},\frac{1}{-\nu-1/2})$ and $(\frac{1}{\nu+3/2},q)$ for
$1/q \ge \nu+3/2 - 2\sigma$.
\end{itemize}
All the results in parts (a) and (b) are sharp in the sense that for no pair $(p,q)$ weak type can be replaced by strong type,
and similarly if restricted weak type $(p,q)$ is claimed, then for no such $(p,q)$ it can be replaced by weak type.
For $(p,q)$ not covered by $(a)$ and $(b)$, $B_{\sigma}^{\nu,1/2}$ is not of restricted weak type $(p,q)$.

All the results stated in (a) remain in force if $B_{\sigma}^{\nu,1/2}$ is replaced
by $A_{\sigma}^{\nu,1/2}$ provided that $\nu > -1/2$, and they are sharp in the sense described above.
\end{theorem}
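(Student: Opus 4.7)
The plan is to reduce everything to the Jacobi analogue with type parameters $\alpha=\nu$ and $\beta=-1/2$. The proof of Theorem \ref{thm:potk} already demonstrates that $A_\sigma^{\nu,1/2}(x,y)$ and $B_\sigma^{\nu,1/2}(x,y)$ satisfy the same sharp pointwise two-sided bounds as the corresponding Jacobi potential kernels for those parameters. Since $L^p$-$L^q$ mapping properties of a positive integral operator are governed entirely by the pointwise size of its kernel, Theorem \ref{thm:pot} follows once one has the analogous characterization in the Jacobi setting, which is in the spirit of \cite{NR3}. Thus the task is really to translate the Jacobi result verbatim.

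For the positive boundedness statements I would, for each admissible pair $(p,q)$, split the kernel bound from Theorem \ref{thm:potk} into its constituent pieces: the boundary weight $(xy)^{\nu+1/2}$, the possible logarithmic factor at $\sigma=1/2$ or $\sigma=\nu+1$, and a symmetric part that is either bounded (when $\sigma>1/2$), logarithmic (when $\sigma=1/2$), or a Riesz-type fractional-integration kernel proportional to $(x+y)|x-y|^{2\sigma-1}/(x+y)^{2\sigma}$ (when $\sigma<1/2$). The resulting operator is then controlled by combining Schur's test with the weighted Hardy--Littlewood--Sobolev inequality on $(0,1)$. In case (a) the weight $(xy)^{\nu+1/2}$ is bounded, and the scheme runs on all of $L^p((0,1),dx)$; the sharp exponents are exactly those of the corresponding Riesz potential on a bounded interval. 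In case (b) the weight has a non-integrable factor at $0$, which dictates the restrictions $1/p<\nu+3/2$ and $1/q>-\nu-1/2$; inside this range the argument proceeds as in case (a), but the admissible $(p,q)$ region is shifted accordingly and the endpoint behaviors change, which explains the difference between $\sigma\gtrless \nu+1$.

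For sharpness I would test each boundary pair $(p,q)$ against functions of the form $f(y)=y^a\chi_{(0,\delta)}(y)$ and $f(y)=(1-y)^b\chi_{(1-\delta,1)}(y)$, optionally decorated with logarithmic factors at the critical values $\sigma=1/2$ and $\sigma=\nu+1$, choosing the exponents so that the lower bounds from Theorem \ref{thm:potk} give explicit lower bounds on $B_\sigma^{\nu,1/2}f(x)$ which preclude the proposed improved mapping. The asymmetry between sharpness claims at the left and right endpoints is forced by the fact that only the left endpoint carries a boundary weight, reflecting that $\beta=-1/2$ in the transferred Jacobi picture.

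The main obstacle will be the bookkeeping in part (b): separating the contributions of the boundary weight from those of the fractional kernel, identifying which borderline weak and restricted weak type estimates survive at the critical values of $\sigma$, and in particular handling $\sigma=\nu+1$, where a logarithm from the kernel interacts with the boundary weight to kill exactly the restricted weak type $(\frac{1}{\nu+3/2},\frac{1}{-\nu-1/2})$ endpoint while preserving the other endpoint bounds.
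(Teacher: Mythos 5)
Your reduction is exactly the paper's argument: the sharp kernel bounds of Theorem \ref{thm:potk} coincide with those of the Jacobi potential kernels for $\alpha=\nu$, $\beta=-1/2$, and since the kernels are positive, all strong/weak/restricted weak type properties (and their failures) transfer verbatim. The paper then simply invokes the known Jacobi characterization \cite[Theorem 2.4]{NR3} rather than re-deriving it, so the Schur-test/HLS machinery and the sharpness test functions you sketch, while reasonable, are not needed.
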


\begin{proof}
The sharp bounds of Theorem \ref{thm:potk} are the same as in the Jacobi context from Section \ref{ssec:jac} with
parameters $\alpha = \nu$ and $\beta = -1/2$. Thus, the mapping properties stated in the theorem can be concluded from
\cite[Theorem 2.4]{NR3}.
\end{proof}

Mapping properties of the considered potential operators are an important tool in the study of potential and Sobolev spaces associated
with Fourier-Dini expansions. This topic, however, is beyond the scope of this paper.

%%%%%%%%%%%%%%%%%%%%%%%%%%%%%%%%%%%%%%%%%%%%%%%%%%%%%%%%%%%%%%%%%%%%%%%%%%%%%%%%%%%%%%%%%%%%%
\section{More on the domains of $\widetilde{\mathbb{L}}^{\nu,H}$ and $\widetilde{\mathbb{J}}^{\a,\b}$} \label{sec:dom}
%%%%%%%%%%%%%%%%%%%%%%%%%%%%%%%%%%%%%%%%%%%%%%%%%%%%%%%%%%%%%%%%%%%%%%%%%%%%%%%%%%%%%%%%%%%%%

In this section we shall prove Theorem \ref{thm:dom}. This will be achieved via a series of auxiliary/partial results.
Some of them are slightly more general than needed for our purpose, nevertheless we keep the generality
for the sake of potential independent interest. Throughout, unless stated otherwise, we always consider $\a,\b,\nu > -1$
and $H \in \mathbb{R}$. We shall abbreviate $L^2((0,1),dx)$ to $L^2(0,1)$.

Recall that, by Lemma \ref{lem:tech2}, $\domain\widetilde{\mathbb{L}}^{\nu,1/2} = \domain\widetilde{\mathbb{J}}^{\nu,-1/2}$.
Taking this into account, Theorem \ref{thm:dom} will be proved once we verify items (a)--(d) as follows.
\begin{itemize}
\item[\textbf{(a)}]
If $\b \neq -1/2$, then $\domain\widetilde{\mathbb{L}}^{\nu,H} \neq \domain\widetilde{\mathbb{J}}^{\a,\b}$.
\item[\textbf{(b)}]
If $H \neq 1/2$, then $\domain\widetilde{\mathbb{L}}^{\nu,H} \neq \domain\widetilde{\mathbb{L}}^{\a,1/2}$.
\item[\textbf{(c)}]
If $\a \wedge \nu \le 1$ and $\a \neq \nu$, then
$\domain\widetilde{\mathbb{L}}^{\nu,1/2} \neq \domain\widetilde{\mathbb{L}}^{\a,1/2}$.
\item[\textbf{(d)}]
If $\a,\nu > 1$, then
$\domain\widetilde{\mathbb{L}}^{\nu,1/2} = \domain\widetilde{\mathbb{L}}^{\a,1/2}$.
\end{itemize}

Our line of showing (a)--(c) relies on verifying that suitably chosen functions on $(0,1)$
belong to one of the domains, but do not belong to the other one. On the other hand, proving (d) is less direct.

%%%%%%%%%%%%%%%%%%%%%%%%%%%%%%%%%%%%%%%%
\subsection{Technical preliminaries}

\begin{lemma} \label{lem:wd}
Let $\a,\b,\nu > -1$ and $H \in \mathbb{R}$. Then
\begin{align*}
\widetilde{\mathbb{L}}^{\nu,H}f & = \mathbb{L}^{\nu}f, \qquad f \in C^2(0,1) \cap \domain\widetilde{\mathbb{L}}^{\nu,H}, \\
\widetilde{\mathbb{J}}^{\a,\b}f & = \mathbb{J}^{\a,\b}f, \qquad f \in C^2(0,1) \cap \domain\widetilde{\mathbb{J}}^{\a,\b}.
\end{align*}
\end{lemma}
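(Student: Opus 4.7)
The plan is to test the asserted identity against smooth, compactly supported test functions, move $\widetilde{\mathbb{L}}^{\nu,H}$ over to the second slot by self-adjointness, and then move it back as $\mathbb{L}^{\nu}$ by classical integration by parts. The compact support of the test function is what makes the whole chain work: it kills all boundary terms and, crucially, keeps one safely away from the singular coefficient $(1/4-\nu^2)/x^2$ of $\mathbb{L}^{\nu}$ near $x=0$ (and from the singular coefficients of $\mathbb{J}^{\a,\b}$ near $x=0,1$).

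The preliminary step is to show $C_c^{\infty}(0,1) \subset \domain\widetilde{\mathbb{L}}^{\nu,H}$ together with $\widetilde{\mathbb{L}}^{\nu,H}\phi = \mathbb{L}^{\nu}\phi$ for every $\phi \in C_c^{\infty}(0,1)$. This is essentially the observation already recorded just after the definition of $\domain\widetilde{\mathbb{L}}^{\nu,H}$ in Section \ref{ssec:FD}: for any $\phi$ with compact support in $(0,1)$, integration by parts twice against $\psi_n^{\nu,H}$ combined with Proposition \ref{prop:eigen} yields
$$
(-1)^{\chi_{\{n=0\}}}\big(\z_n^{\nu,H}\big)^2 \big\langle \phi, \psi_n^{\nu,H}\big\rangle = \big\langle \mathbb{L}^{\nu}\phi, \psi_n^{\nu,H}\big\rangle, \qquad n \ge \chi_{\{H+\nu > 0\}}.
$$
Since $\mathbb{L}^{\nu}\phi$ is continuous and compactly supported in $(0,1)$, hence in $L^2(0,1)$, Parseval's identity delivers both $\phi \in \domain\widetilde{\mathbb{L}}^{\nu,H}$ and $\widetilde{\mathbb{L}}^{\nu,H}\phi = \mathbb{L}^{\nu}\phi$. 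An identical argument with $\Phi_k^{\a,\b}$ and eigenvalue $\Lambda_k^{\a,\b}$ in place of $\psi_n^{\nu,H}$ and $(-1)^{\chi_{\{n=0\}}}(\z_n^{\nu,H})^2$ gives the Jacobi counterpart $C_c^{\infty}(0,1) \subset \domain\widetilde{\mathbb{J}}^{\a,\b}$ with $\widetilde{\mathbb{J}}^{\a,\b}\phi = \mathbb{J}^{\a,\b}\phi$.

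With this in hand, pick $f \in C^2(0,1) \cap \domain\widetilde{\mathbb{L}}^{\nu,H}$ and any $\phi \in C_c^{\infty}(0,1)$. Self-adjointness of $\widetilde{\mathbb{L}}^{\nu,H}$, the preliminary step, and a further integration by parts twice against $f$ on $\support\phi \Subset (0,1)$ give
$$
\big\langle \widetilde{\mathbb{L}}^{\nu,H}f, \phi\big\rangle = \big\langle f, \widetilde{\mathbb{L}}^{\nu,H}\phi\big\rangle = \big\langle f, \mathbb{L}^{\nu}\phi\big\rangle = \big\langle \mathbb{L}^{\nu}f, \phi\big\rangle.
$$
The last equality is legitimate because $f$ is $C^2$ on the compact subset $\support\phi$ of $(0,1)$ and the coefficient $(1/4-\nu^2)/x^2$ is bounded on that subset, so boundary terms from the double integration by parts vanish thanks to $\phi,\phi' \equiv 0$ near $\partial\support\phi$. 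Hence $\widetilde{\mathbb{L}}^{\nu,H}f - \mathbb{L}^{\nu}f$ annihilates $C_c^{\infty}(0,1)$ and therefore vanishes as a distribution on $(0,1)$; as $\widetilde{\mathbb{L}}^{\nu,H}f \in L^2(0,1)$ and $\mathbb{L}^{\nu}f$ is continuous on $(0,1)$, both are locally integrable and the distributional equality forces pointwise a.e., hence $L^2$, equality. The Jacobi identity is proved by the same chain with $\widetilde{\mathbb{J}}^{\a,\b}$, $\mathbb{J}^{\a,\b}$ and $\Phi_k^{\a,\b}$ in place of $\widetilde{\mathbb{L}}^{\nu,H}$, $\mathbb{L}^{\nu}$ and $\psi_n^{\nu,H}$.

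No genuine obstacle is anticipated: all boundary behavior of $f$ and of the basis functions is screened out by the compact support of the test function, and the only mildly delicate step is confirming $C_c^{\infty}(0,1) \subset \domain\widetilde{\mathbb{L}}^{\nu,H}$ (and its Jacobi analogue), which is a direct consequence of Proposition \ref{prop:eigen} together with Parseval's identity.
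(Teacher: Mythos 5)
Your argument is correct and is essentially the paper's own proof: both establish that $C_c^{\infty}(0,1)$ lies in the domain with $\widetilde{\mathbb{L}}^{\nu,H}\phi=\mathbb{L}^{\nu}\phi$ there, then use self-adjointness and a double integration by parts against such test functions, and conclude by density. Your explicit distributional justification of the final step (noting $\mathbb{L}^{\nu}f$ is merely continuous, hence locally integrable, on $(0,1)$) is a slightly more careful rendering of the paper's closing density argument, but the route is the same.
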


\begin{proof}\!\!$^{\spadesuit}$
We only prove the statement regarding $\widetilde{\mathbb{L}}^{\nu,H}$ since treatment of $\widetilde{\mathbb{J}}^{\a,\b}$
is analogous.
\footnote{$\spadesuit$
There is an alternative justification of Lemma \ref{lem:wd} appealing to the theory of Sturm-Liouville operators
and the concepts of minimal/maximal extensions of $\mathbb{L}^{\nu}$ and $\mathbb{J}^{\a,\b}$,
but we prefer to present a simple direct reasoning.}

Let $f \in C^2(0,1) \cap \domain\widetilde{\mathbb{L}}^{\nu,H}$ and $g \in C_c^{\infty}(0,1)$.
The support of $g$ is separated from the endpoints of $(0,1)$, so integrating twice by parts,
see the divergence form of $\mathbb{L}^{\nu}$ in \eqref{id8}, we get
$$
\big\langle \mathbb{L}^{\nu}f,g\big\rangle = \big\langle f, \mathbb{L}^{\nu}g \big\rangle.
$$
Using this and observing that $\mathbb{L}^{\nu}g \in L^2(0,1)$, we can write
\begin{align*}
\mathbb{L}^{\nu}g & = \sum_{n \ge \chi_{\{H+\nu > 0\}}} \big\langle \mathbb{L}^{\nu}g, \psi_n^{\nu,H}\big\rangle \psi_n^{\nu,H}
= \sum_{n \ge \chi_{\{H+\nu > 0\}}} \big\langle g,  \mathbb{L}^{\nu}\psi_n^{\nu,H}\big\rangle \psi_n^{\nu,H} \\
& = \sum_{n \ge \chi_{\{H+\nu > 0\}}} (-1)^{\chi_{\{n=0\}}} \big( \z_n^{\nu,H}\big)^2
	\big\langle g, \psi_n^{\nu,H}\big\rangle \psi_n^{\nu,H} = \widetilde{\mathbb{L}}^{\nu,H}g.
\end{align*}
In particular, $g \in \domain\widetilde{\mathbb{L}}^{\nu,H}$.
Consequently, by the above and self-adjointness of $\widetilde{\mathbb{L}}^{\nu,H}$,
$$
\big\langle \widetilde{\mathbb{L}}^{\nu,H}f, g \big\rangle = \langle f, \widetilde{\mathbb{L}}^{\nu,H}g\big\rangle
= \big\langle f, \mathbb{L}^{\nu}g\big\rangle = \big\langle \mathbb{L}^{\nu}f, g \big\rangle.
$$
Since this holds for all $g \in C_c^{\infty}(0,1)$, which is a dense subset of $L^2(0,1)$, we conclude that
$\widetilde{\mathbb{L}}^{\nu,H}f = \mathbb{L}^{\nu}f$.
\end{proof}

\begin{lemma} \label{lem:rho}
Let $\nu > -1$ and $H \in \mathbb{R}$. Assume that a function $\rho \in C^2(0,1)$ is constant in some neighborhoods of the endpoints
of $(0,1)$. Then
$$
\rho \psi_n^{\nu,H} \in \domain \widetilde{\mathbb{L}}^{\nu,H}, \qquad n \ge \chi_{\{H+\nu > 0\}}.
$$
\end{lemma}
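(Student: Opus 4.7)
The plan is to verify directly that $f := \rho\,\psi_n^{\nu,H}$ satisfies the summability condition defining $\domain\widetilde{\mathbb{L}}^{\nu,H}$. Since $\rho$, $\rho'$ and $\rho''$ are bounded on $(0,1)$ (the latter two have compact support separated from the endpoints), the function $f$ lies in $C^2(0,1)\cap L^2(0,1)$, and a direct product-rule calculation gives
\begin{equation*}
\mathbb{L}^{\nu} f = \rho\,\mathbb{L}^{\nu}\psi_n^{\nu,H} - 2\rho'\big(\psi_n^{\nu,H}\big)' - \rho''\,\psi_n^{\nu,H},
\end{equation*}
which likewise belongs to $L^2(0,1)$.

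The crux will be the integration-by-parts identity
\begin{equation*}
\big\langle f, \mathbb{L}^{\nu}\psi_m^{\nu,H}\big\rangle = \big\langle \mathbb{L}^{\nu}f, \psi_m^{\nu,H}\big\rangle, \qquad m \ge \chi_{\{H+\nu > 0\}}.
\end{equation*}
Granted this, Proposition \ref{prop:eigen} yields $(-1)^{\chi_{\{m=0\}}}\big(\z_m^{\nu,H}\big)^2 \big\langle f, \psi_m^{\nu,H}\big\rangle = \big\langle \mathbb{L}^{\nu}f, \psi_m^{\nu,H}\big\rangle$ for each $m$, and Parseval's identity then gives
\begin{equation*}
\sum_{m \ge \chi_{\{H+\nu > 0\}}} \big|\big(\z_m^{\nu,H}\big)^2 \big\langle f, \psi_m^{\nu,H}\big\rangle\big|^2
= \sum_m \big|\big\langle \mathbb{L}^{\nu}f, \psi_m^{\nu,H}\big\rangle\big|^2 \le \big\|\mathbb{L}^{\nu}f\big\|^2_{L^2(0,1)} < \infty,
\end{equation*}
placing $f$ in $\domain \widetilde{\mathbb{L}}^{\nu,H}$.

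I would prove the integration-by-parts identity by applying two integrations by parts in the divergence form \eqref{id8} of $\mathbb{L}^{\nu}$. Besides the symmetric bulk term, this produces four boundary contributions of the form $\pm\big[x^{2\nu+1}(x^{-\nu-1/2}u)' \cdot x^{-\nu-1/2}v\big]_{x=0}^{x=1}$ with $\{u,v\}=\{f,\psi_m^{\nu,H}\}$. At $x=0^+$, the small-argument asymptotics \eqref{as1} together with \eqref{id5} show that $x^{-\nu-1/2}\psi_k^{\nu,H}$ tends to a finite constant while $(x^{-\nu-1/2}\psi_k^{\nu,H})' = \mathcal{O}(x)$; constancy of $\rho$ near $0$ transfers these properties to $f$, so multiplication by $x^{2\nu+1}$ yields boundary contributions of order $\mathcal{O}(x^{2\nu+2})$ that vanish at $0^+$ precisely because $\nu > -1$. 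At $x=1$, constancy of $\rho$ near $1$ forces $f(1)=\rho(1)\psi_n^{\nu,H}(1)$ and $f'(1)=\rho(1)(\psi_n^{\nu,H})'(1)$, so $f$ inherits the Robin condition $(H-1/2)u(1)+u'(1)=0$ that every eigenfunction $\psi_k^{\nu,H}$ satisfies. A short simplification then collapses the two boundary terms at $x=1$ into the Wronskian-type expression $f(1)(\psi_m^{\nu,H})'(1) - f'(1)\psi_m^{\nu,H}(1)$, which vanishes by the common Robin condition.

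The main obstacle is this careful bookkeeping of the four boundary terms, the delicate case being $\nu\in(-1,-1/2)$ at the origin: there the weight $x^{-\nu-1/2}$ is unbounded, but it is exactly compensated by the extra $x^{2\nu+1}$ produced by the divergence form, leaving the harmless factor $x^{2\nu+2}$ that decays at $0^+$ thanks to the standing hypothesis $\nu > -1$.
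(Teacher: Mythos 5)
Your argument is correct and follows essentially the same route as the paper's proof: show $\mathbb{L}^{\nu}(\rho\psi_n^{\nu,H})\in L^2(0,1)$ via the product rule, move $\mathbb{L}^{\nu}$ onto $\psi_m^{\nu,H}$ by two integrations by parts in the divergence form \eqref{id8}, invoke Proposition \ref{prop:eigen}, and conclude square-summability of the coefficients from Parseval. The paper merely leaves the boundary-term bookkeeping (vanishing at $0^+$ from \eqref{as1}, \eqref{id5} and at $1$ from the common Robin condition encoded in \eqref{id6}, \eqref{id66}) implicit, which you have correctly spelled out.
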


\begin{proof}
Since $\rho$ is bounded on $(0,1)$, it is clear that $\rho \psi_n^{\nu,H} \in L^2(0,1)$.
Moreover, by \eqref{id8} we have
$\mathbb{L}^{\nu}(\rho\psi_n^{\nu,H})=  \rho \mathbb{L}^{\nu}\psi_n^{\nu,H} -\rho'' \psi_n^{\nu,H} - 2\rho'(\psi_n^{\nu,H})'$,
so the assumptions on $\rho$ imply $\mathbb{L}^{\nu}(\rho\psi_n^{\nu,H}) \in L^2(0,1)$.
Integrating twice by parts (using the divergence form of $\mathbb{L}^{\nu}$ in \eqref{id8} and also \eqref{id6},\eqref{id66})
we see that
$$
\big\langle \mathbb{L}^{\nu}\big(\rho\psi_n^{\nu,H}\big), \psi_k^{\nu,H}\big\rangle
= \big\langle \rho\psi_n^{\nu,H}, \mathbb{L}^{\nu}\psi_k^{\nu,H}\big\rangle
= (-1)^{\chi_{\{k=0\}}}\big(\z_k^{\nu,H}\big)^2 \big\langle \rho\psi_n^{\nu,H}, \psi_k^{\nu,H}\big\rangle,
\qquad k \ge \chi_{\{H+\nu > 0\}}.
$$
On the other hand, these are the Fourier-Dini coefficients of $\mathbb{L}^{\nu}(\rho\psi_n^{\nu,H}) \in L^2(0,1)$,
hence they are square-summable in $k$. The conclusion follows.
\end{proof}

\begin{lemma} \label{lem:lagr}
Let $\nu > -1$ and $H \in \mathbb{R}$. Then
$$
f\overline{g'}-f'\overline{g} \big|_0^1 \; := \; \lim_{\epsilon \to 0^+}
\Big[ f(x)\overline{g'(x)}-f'(x)\overline{g(x)}\Big] \bigg|_{x=\epsilon}^{x=1-\epsilon}\; = 0
$$
when $f,g \in C^2(0,1) \cap \domain\widetilde{\mathbb{L}}^{\nu,H}$.
\end{lemma}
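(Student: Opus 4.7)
The plan is to derive the classical Lagrange identity for $\mathbb{L}^{\nu}$ on a shrinking subinterval $[\epsilon,1-\epsilon]$ and then pass to the limit, letting self-adjointness of $\widetilde{\mathbb{L}}^{\nu,H}$ kill the bulk term.

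First, since $f,g \in C^2(0,1) \cap \domain \widetilde{\mathbb{L}}^{\nu,H}$, Lemma \ref{lem:wd} gives that $\widetilde{\mathbb{L}}^{\nu,H} f = \mathbb{L}^{\nu} f$ and $\widetilde{\mathbb{L}}^{\nu,H} g = \mathbb{L}^{\nu} g$ as elements of $L^2(0,1)$. Self-adjointness of $\widetilde{\mathbb{L}}^{\nu,H}$ then yields
$$
\big\langle \mathbb{L}^{\nu} f, g \big\rangle = \big\langle f, \mathbb{L}^{\nu} g \big\rangle.
$$

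Second, on each compact subinterval $[\epsilon,1-\epsilon] \subset (0,1)$ the functions $f,g$ are $C^2$, so using the non-divergence form of $\mathbb{L}^{\nu}$ from \eqref{id8} and integrating by parts twice (the potential term $-(1/4-\nu^2)/x^2$ being real and so contributing equally to both sides), one obtains the classical Lagrange identity
$$
\int_{\epsilon}^{1-\epsilon} \Big[ \mathbb{L}^{\nu} f(x)\, \overline{g(x)} - f(x)\, \overline{\mathbb{L}^{\nu} g(x)} \Big] dx
= \big[ f(x)\overline{g'(x)} - f'(x)\overline{g(x)} \big] \Big|_{x=\epsilon}^{x=1-\epsilon}.
$$

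Third, since $\mathbb{L}^{\nu}f, \mathbb{L}^{\nu}g, f, g \in L^2(0,1)$, dominated convergence shows that the left-hand side converges, as $\epsilon \to 0^+$, to $\langle \mathbb{L}^{\nu} f, g\rangle - \langle f, \mathbb{L}^{\nu} g\rangle$, which vanishes by the first step. Therefore the right-hand side also has a limit as $\epsilon \to 0^+$, and that limit equals zero. This is exactly the assertion of the lemma.

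The argument is essentially routine; there is no real obstacle beyond keeping track of which statements require integrability at the endpoints. The two non-trivial inputs — that the abstract operator reduces to the differential operator $\mathbb{L}^{\nu}$ on smooth functions in its domain (Lemma \ref{lem:wd}) and that $\widetilde{\mathbb{L}}^{\nu,H}$ is self-adjoint — are both already in hand, so the only care needed is to perform the integration by parts on a compact subinterval before taking the limit (so that the boundary terms are defined classically and only the limit is interpreted).
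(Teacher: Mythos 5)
Your proof is correct and follows essentially the same route as the paper: reduce $\widetilde{\mathbb{L}}^{\nu,H}$ to $\mathbb{L}^{\nu}$ via Lemma \ref{lem:wd}, invoke self-adjointness to kill $\langle \mathbb{L}^{\nu}f,g\rangle - \langle f,\mathbb{L}^{\nu}g\rangle$, and identify that difference with the boundary term by integrating by parts twice. Your extra care in performing the integration by parts on $[\epsilon,1-\epsilon]$ first and then letting $\epsilon\to 0^+$ (so that the existence of the limit comes for free from $L^2$-integrability of the bulk terms) is exactly what the paper's terser argument tacitly does, and matches its footnoted appeal to the Lagrange identity.
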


\begin{proof}
By self-adjointness of $\widetilde{\mathbb{L}}^{\nu,H}$, we have
$\langle\widetilde{\mathbb{L}}^{\nu,H}f,g\rangle = \langle f,\widetilde{\mathbb{L}}^{\nu,H}g\rangle$.
Thus using Lemma \ref{lem:wd} and then integrating twice by parts$^{\diamondsuit}$ we get
\begin{equation} \label{p777}
0 = \big\langle\widetilde{\mathbb{L}}^{\nu,H}f,g\big\rangle - \big\langle f,\widetilde{\mathbb{L}}^{\nu,H}g\big\rangle
 = \big\langle{\mathbb{L}}^{\nu}f,g\big\rangle - \big\langle f,{\mathbb{L}}^{\nu}g\big\rangle
 = f\overline{g'} - f'\overline{g} \big|_0^1,
\end{equation}
as desired.
%%%
\footnote{$\diamondsuit$
Instead of directly integrating by parts twice one could invoke a general theory.
Indeed, the \emph{Lagrange identity} for the Sturm-Liouville operator $\mathbb{L}^{\nu}$ is the last identity
in \eqref{p777} for $f,g \in L^2(0,1)$ such that $f,g,f',g'$ are absolutely continuous in $(0,1)$ and
$\mathbb{L}^{\nu}f, \mathbb{L}^{\nu}g \in L^2(0,1)$; see e.g.\ \cite[Section 2]{weid}.
}
%%%
\end{proof}

\begin{lemma} \label{lem:techJ}
Let $\nu,\a > -1$. Assume that $\eta,\xi > 0$, $\eta \neq \xi$, and define $f_{\xi}^{\nu}(x):=\sqrt{x}J_{\nu}(\xi x)$.
Then the Wronskian $W_{\xi,\eta}^{\nu,\a}$ of the system $\{f_{\xi}^{\nu},f_{\eta}^{\a}\}$ expresses as
\begin{align*}
W_{\xi,\eta}^{\nu,\a}(x) & := f_{\xi}^{\nu}(x) \big[ f_{\eta}^{\a}(x)\big]' - \big[f_{\xi}^{\nu}(x)]' f_{\eta}^{\a}(x) \\
& = (\a-\nu) J_{\nu}(\xi x) J_{\a}(\eta x) + x \big[ \xi J_{\nu+1}(\xi x) J_{\a}(\eta x) - \eta J_{\nu}(\xi x) J_{\a+1}(\eta x)\big].
\end{align*}
\end{lemma}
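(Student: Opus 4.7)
The proof is a direct computation. First I would differentiate $f_\xi^\nu(x) = \sqrt{x}\, J_\nu(\xi x)$ using the product and chain rules, obtaining
$$
\big[f_\xi^\nu(x)\big]' = \frac{1}{2\sqrt{x}} J_\nu(\xi x) + \xi \sqrt{x}\, J_\nu'(\xi x).
$$
Then I would eliminate the derivative of the Bessel function by invoking the first identity in \eqref{id3}, rewritten as $\xi J_\nu'(\xi x) = \frac{\nu}{x} J_\nu(\xi x) - \xi J_{\nu+1}(\xi x)$, yielding
$$
\big[f_\xi^\nu(x)\big]' = \frac{2\nu+1}{2\sqrt{x}}\, J_\nu(\xi x) - \xi\sqrt{x}\, J_{\nu+1}(\xi x),
$$
and analogously for $[f_\eta^\alpha(x)]'$ with $\nu,\xi$ replaced by $\alpha,\eta$.

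Next I would plug these expressions into the definition
$W_{\xi,\eta}^{\nu,\alpha}(x) = f_\xi^\nu(x)\big[f_\eta^\alpha(x)\big]' - \big[f_\xi^\nu(x)\big]' f_\eta^\alpha(x)$
and expand. The factor $\sqrt{x}$ in each $f$ combines with the $1/(2\sqrt{x})$ contributions to give ordinary $\frac{1}{2}$ factors, while the remaining terms pick up a factor of $x$. The coefficient of $J_\nu(\xi x) J_\alpha(\eta x)$ will come out to $\frac{2\alpha+1}{2} - \frac{2\nu+1}{2} = \alpha-\nu$, and the remaining terms collect into
$x\big[\xi J_{\nu+1}(\xi x) J_\alpha(\eta x) - \eta J_\nu(\xi x) J_{\alpha+1}(\eta x)\big]$, matching the asserted formula.

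There is no real obstacle here; the entire argument is a single careful application of identity \eqref{id3} followed by algebraic simplification. The only point requiring a moment of care is bookkeeping the signs and the placement of $\xi$ versus $\eta$ when the chain rule produces inner derivatives, but once the two derivative formulas are correctly recorded in the displayed form above, the rest is symmetric cancellation.
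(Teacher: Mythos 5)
Your proof is correct and follows exactly the route the paper indicates: a direct computation using the identity $J'_{\nu}(u) = \frac{\nu}{u}J_{\nu}(u) - J_{\nu+1}(u)$ from \eqref{id3}, with the details (which the paper leaves to the reader) carried out correctly. The derivative formulas and the final cancellation all check out.
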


\begin{proof}
This is a direct computation based on the formula, cf.\ \eqref{id3}, $J'_{\nu}(x) = \frac{\nu}x J_{\nu}(x) - J_{\nu+1}(x)$.
We leave the details to the reader.
\end{proof}

In what follows we will use cutoff functions $\rho_0$ and $\rho_1$ defined as follows.
Let $0 \le \rho_0 \le 1$ be a smooth function on $(0,1)$ such that $\rho_0 \equiv 1$ on $(0,1/4)$ and $\rho_0 \equiv 0$ on $(3/4,1)$.
Then put $\rho_1 := 1 - \rho_0$.

%%%%%%%%%%%%%%%%%%%%%%%%%%%%%%%%%%%%%%%%
\subsection{Proof of (a)}
We assume that $\b \neq -1/2$ and will show that $\domain\widetilde{\mathbb{L}}^{\nu,H} \neq \domain\widetilde{\mathbb{J}}^{\a,\b}$.
It is convenient to consider two cases.

\noindent \textbf{Case 1:} $\beta \neq 1/2$.
Clearly, $\psi_1^{\nu,H} \in C^2(0,1) \cap\domain\widetilde{\mathbb{L}}^{\nu,H}$.
In view of Lemma \ref{lem:wd}, it is enough to check that $\mathbb{J}^{\a,\b}\psi_1^{\nu,H} \notin L^2(0,1)$.
We have
$\mathbb{J}^{\a,\b}\psi_1^{\nu,H} = (\mathbb{J}^{\a,\b} - \mathbb{L}^{\nu}) \psi_1^{\nu,H} + \mathbb{L}^{\nu}\psi_1^{\nu,H}$.
Here $\mathbb{L}^{\nu}\psi_1^{\nu,H} \in L^2(0,1)$, but $(\mathbb{J}^{\a,\b} - \mathbb{L}^{\nu}) \psi_1^{\nu,H} \notin L^2(0,1)$,
see \eqref{LJdiff}. Consequently, $\mathbb{J}^{\a,\b}\psi_1^{\nu,H} \notin L^2(0,1)$.

\noindent \textbf{Case 2:} $\beta = 1/2$.
It is known, cf.\ \cite[Theorem 3.1]{NR2}, that $\domain\widetilde{\mathbb{J}}^{\a,1/2} = \domain\widetilde{\mathbb{L}}^{\a}$,
where $\widetilde{\mathbb{L}}^{\a}$ is the self-adjoint extension of $\mathbb{L}^{\a}$ associated with Fourier-Bessel expansions.
Thus, it suffices to show that $\domain\widetilde{\mathbb{L}}^{\nu,H} \neq \domain\widetilde{\mathbb{L}}^{\a}$.

Consider $f = \rho_1 \psi_1^{\nu,H}$ and $g=\psi_1^{\a}$, where $\psi_1^{\a}(x)$ is the first eigenfunction of
$\widetilde{\mathbb{L}}^{\a}$ (up to a positive constant factor, it is equal to $\sqrt{x}J_{\a}(\z_1^{\a}x)$, where $\z_1^{\a}$ is
the first positive zero of $J_{\a}$). By Lemma \ref{lem:rho}, $f \in C^2(0,1) \cap \domain\widetilde{\mathbb{L}}^{\nu,H}$,
and obviously $g \in C^2(0,1) \cap \domain\widetilde{\mathbb{L}}^{\a}$.

Aiming at a contradiction, assume that $\domain\widetilde{\mathbb{L}}^{\nu,H} = \domain\widetilde{\mathbb{L}}^{\a}$.
In Lemma \ref{lem:techJ} we let $\xi=\z_1^{\nu,H}$ and $\eta = \z_1^{\a}$ and observe that
$fg'-f'g$ is equal, up to a positive constant factor $c$, $W_{\xi,\eta}^{\nu,\a}$ on $(3/4,1)$, and
$fg'-f'g$ vanishes on $(0,1/4)$. Thus, by Lemma \ref{lem:techJ},
$$
fg'-f'g\big|_0^1 = c\, W_{\xi,\eta}^{\nu,\a}(1) = -c \z_1^{\a} J_{\nu}\big(\z_1^{\nu,H}\big) J_{\a+1}\big( \z_1^{\a}\big) \neq 0,
$$
where the last relation holds since positive zeros of $J_{\a}$ interlace those of $J_{\a+1}$,
and those of $J_{\a,H}$ (cf.\ Section \ref{sec:Bes}).
In view of Lemma~\ref{lem:lagr}, this is a contradiction which proves that
$\domain\widetilde{\mathbb{L}}^{\nu,H} \neq \domain\widetilde{\mathbb{L}}^{\a}$.

%%%%%%%%%%%%%%%%%%%%%%%%%%%%%%%%%%%%%%%%
\subsection{Proof of (b)}
We assume that $H \neq 1/2$ and will show that $\domain\widetilde{\mathbb{L}}^{\nu,H} \neq \domain\widetilde{\mathbb{L}}^{\a,1/2}$.
The argument is similar to that from Case 2 in the proof of (a).

Consider $f = \rho_1 \psi_1^{\nu,H}$ and $g=\psi_1^{\a,1/2}$. Then $f \in C^2(0,1) \cap \domain\widetilde{\mathbb{L}}^{\nu,H}$
(cf.\ Lemma \ref{lem:rho}) and $g \in C^2(0,1) \cap \domain\widetilde{\mathbb{L}}^{\a,1/2}$.
In Lemma \ref{lem:techJ} we let $\xi = \z_1^{\nu,H}$ and $\eta = \z_1^{\a,1/2}$, and get
$$
fg'-f'g\big|_0^1 = c\, W_{\xi,\eta}^{\nu,\a}(1) = c\, (H-1/2) J_{\nu}\big( \z_1^{\nu,H}\big) J_{\a}\big(\z_1^{\a,1/2}\big) \neq 0,
$$
where $c>0$ and the second equality follows by combining Lemma \ref{lem:techJ} with \eqref{id6}.
Now the conclusion follows by Lemma \ref{lem:lagr}.

%%%%%%%%%%%%%%%%%%%%%%%%%%%%%%%%%%%%%%%%
\subsection{Proof of (c)}
We assume that $\a \wedge \nu \le 1$ and $\nu \neq \a$ and will show that
$\domain\widetilde{\mathbb{L}}^{\nu,1/2} \neq \domain\widetilde{\mathbb{L}}^{\a,1/2}$. For symmetry reasons,
we may assume that $\nu < \a$, thus $\nu \in (-1,1]$.
Then it is convenient to consider two cases.

\noindent \textbf{Case 1:} $\a \neq -\nu$.
Aiming at a contradiction, we assume that
$\psi_1^{\nu,1/2} \in \domain\widetilde{\mathbb{L}}^{\nu,1/2} \cap \domain\widetilde{\mathbb{L}}^{\a,1/2}$.
Then, using Lemma \ref{lem:wd}, we have
$$
L^2(0,1) \ni \widetilde{\mathbb{L}}^{\nu,1/2}\psi_1^{\nu,1/2} - \widetilde{\mathbb{L}}^{\a,1/2}\psi_1^{\nu,1/2}
 = \mathbb{L}^{\nu}\psi_1^{\nu,1/2} - \mathbb{L}^{\a}\psi_1^{\nu,1/2}
 = \frac{\nu^2-\a^2}{x^2} \psi_1^{\nu,1/2}.
$$
However, in view of \eqref{as1} the function in question does not belong to $L^2(0,1)$ for $\nu \le 1$,
due to its order of magnitude at $0^+$. This contradiction shows that
$\psi_1^{\nu,1/2} \in \domain\widetilde{\mathbb{L}}^{\nu,1/2} \setminus \domain\widetilde{\mathbb{L}}^{\a,1/2}$.

\noindent \textbf{Case 2:} $\a = -\nu$. Thus $\nu \in (-1,1)\setminus\{0\}$.
We need to prove that $\domain\widetilde{\mathbb{L}}^{\nu,1/2} \neq \domain\widetilde{\mathbb{L}}^{-\nu,1/2}$.
The argument that follows is similar to those from the proofs of Case 2 in (a) and (b).

Consider $f=\rho_0 \psi_1^{\nu,1/2}$ and $g=\psi_1^{-\nu,1/2}$. Then
$f \in C^2(0,1) \cap \domain\widetilde{\mathbb{L}}^{\nu,1/2}$ (see Lemma \ref{lem:rho}) and
$g \in C^2(0,1) \cap \domain\widetilde{\mathbb{L}}^{-\nu,1/2}$. In Lemma \ref{lem:techJ} we let
$\xi = \z_1^{\nu,1/2}$ and $\eta = \z_1^{-\nu,1/2}$ and obtain
$$
fg'-f'g\big|_0^1 = c \lim_{x\to 0^+} W_{\xi,\eta}^{\nu,\a}(x) =
\frac{-2 c\,  \nu}{\Gamma(1-\nu)\Gamma(1+\nu)}\Big(\frac{\xi}{\nu}\Big)^{\nu}
= -\frac{2c}{\pi} \sin(\pi \nu) \bigg(\frac{\z_1^{\nu,1/2}}{\z_1^{-\nu,1/2}}\bigg)^{\nu} \neq 0;
$$
here $c>0$ and the second equality follows by combining Lemma \ref{lem:techJ} with \eqref{as1}.
To get the third equality we used a well-known identity for the gamma function.
The desired conclusion now follows by Lemma~\ref{lem:lagr}.

%%%%%%%%%%%%%%%%%%%%%%%%%%%%%%%%%%%%%%%%
\subsection{Proof of (d)}
We assume that $\nu,\a > 1$ and must verify that $\domain\widetilde{\mathbb{L}}^{\nu,1/2} = \domain\widetilde{\mathbb{L}}^{\a,1/2}$.
This can be done by repeating the reasoning from the proof Lemma \ref{lem:tech2}, where also an analogue of Lemma~\ref{lem:tech}
is necessary. The only additional result needed is that $f/x^2 \in L^2(0,1)$ whenever $f \in \domain\widetilde{\mathbb{L}}^{\nu,1/2}$,
since in the present scenario the counterpart of the function $F^{\nu}$ is in absolute value comparable to $x^{-2}$.
We shall leave the straightforward details to interested readers, except for the aforementioned auxiliary result which follows
from the lemma below. The lemma can be regarded as a variant of the classic Rellich inequality \cite{Rel}
in the framework of $\widetilde{\mathbb{L}}^{\nu,1/2}$, which might be of independent interest.

\begin{lemma} \label{lem:x2}
Let $\nu > 1$. Then
\begin{equation} \label{Rell}
\big\| f/x^2 \big\|_{L^2(0,1)} \le \frac{1}{\nu^2-1} \big\| \widetilde{\mathbb{L}}^{\nu,1/2}f \big\|_{L^2(0,1)},
\qquad f \in \domain \widetilde{\mathbb{L}}^{\nu,1/2}.
\end{equation}
\end{lemma}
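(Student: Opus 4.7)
The plan rests on the observation that $\phi_0(x) := x^{-1/2}$ is a formal solution of
$$ \mathbb{L}^{\nu}\phi_0 = (\nu^2 - 1)\phi_0/x^2, $$
which is exactly where the factor $\nu^2 - 1$ originates. Writing $f = \phi_0 \psi$ with $\psi(x) := x^{1/2} f(x)$ and applying the Leibniz rule to $(\phi_0 \psi)''$, one readily derives the pointwise ``ground-state-type'' identity
$$ (\nu^2 - 1)\frac{f(x)}{x^2} = \mathbb{L}^{\nu}f(x) + x^{1/2}\bigl(x^{-1}\psi'(x)\bigr)', \qquad x \in (0,1). $$

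I would first establish \eqref{Rell} on the dense subspace $\mathcal{D} \subset \domain \widetilde{\mathbb{L}}^{\nu,1/2}$ consisting of finite combinations $f = \sum_{n=1}^{N} c_n \psi_n^{\nu,1/2}$. Such $f$ are smooth on $[0,1]$ with $f(x) = O(x^{\nu+1/2})$ at $x=0^+$ (so $f/x^2 \in L^2(0,1)$ thanks to $\nu > 1$), and they satisfy the Neumann condition $f'(1) = 0$, which follows at once from $J_{\nu,1/2}(\z_n^{\nu,1/2}) = 0$ and the definition of $\psi_n^{\nu,1/2}$ combined with \eqref{id6}. Testing the identity above against $f/x^2$ in $L^2(0,1)$ yields
$$ (\nu^2 - 1)\|f/x^2\|_{L^2(0,1)}^2 = \bigl\langle \mathbb{L}^{\nu} f, f/x^2 \bigr\rangle + \int_0^1 \bigl(x^{-1}\psi'\bigr)' \, \psi/x^2 \, dx, $$
and Cauchy-Schwarz together with Lemma \ref{lem:wd} (so that $\mathbb{L}^{\nu} f = \widetilde{\mathbb{L}}^{\nu,1/2} f$ on $\mathcal{D}$) takes care of the first term. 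The task reduces to showing that the second term is nonpositive.

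Two successive integrations by parts convert the second term into
$$ \bigl[x^{-3}\psi'\psi + x^{-4}\psi^2\bigr]\Big|_0^1 \, - \, \int_0^1 x^{-3}(\psi')^2\, dx \, + \, 4\int_0^1 x^{-5}\psi^2\, dx. $$
The boundary contribution at $0^+$ vanishes since $\psi(x) = O(x^{\nu+1})$ with $\nu > 1$, while at $x=1$, using $\psi'(1) = f(1)/2 + f'(1) = f(1)/2$, it equals $\tfrac{3}{2}f(1)^2$. The crucial ingredient is then a weighted Hardy inequality with a boundary correction,
$$ \int_0^1 x^{-3}(\psi')^2\, dx \ge 4\int_0^1 x^{-5}\psi^2\, dx + 2\psi(1)^2, $$
which I would obtain by integrating $\int_0^1 x^{-5}\psi^2\, dx$ by parts (its boundary term at $0^+$ again vanishes thanks to $\nu > 1$), applying Cauchy-Schwarz to reach the estimate $2A + \psi(1)^2/2 \le \sqrt{AB}$ with $A = \int_0^1 x^{-5}\psi^2\,dx$ and $B = \int_0^1 x^{-3}(\psi')^2\,dx$, and then squaring. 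Substituting back shows the second term is bounded by $\tfrac{3}{2}f(1)^2 - 2f(1)^2 = -\tfrac{1}{2}f(1)^2 \le 0$, which yields \eqref{Rell} on $\mathcal{D}$.

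Finally, for arbitrary $f \in \domain \widetilde{\mathbb{L}}^{\nu,1/2}$ I would approximate by the partial sums $f_N = \sum_{n=1}^N \langle f, \psi_n^{\nu,1/2}\rangle \psi_n^{\nu,1/2} \in \mathcal{D}$ of its Fourier-Dini expansion, so that $\widetilde{\mathbb{L}}^{\nu,1/2} f_N \to \widetilde{\mathbb{L}}^{\nu,1/2} f$ in $L^2$, pass to a subsequence converging almost everywhere, and invoke Fatou's lemma to transfer \eqref{Rell}. The main anticipated obstacle is the sharp balancing of boundary data at $x=1$: the positive contribution $\tfrac{3}{2}f(1)^2$ produced by the second integration by parts must be precisely absorbed by the $2\psi(1)^2 = 2f(1)^2$ correction in the Hardy inequality, leaving the nonpositive remainder $-\tfrac{1}{2}f(1)^2$; a cruder Hardy bound without this boundary correction would fail to close the argument.
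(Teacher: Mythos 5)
Your argument is correct, and it reaches the constant $\nu^2-1$ by a genuinely different mechanism than the paper. The paper works directly with $f$: integrating $\langle \mathbb{L}^{\nu}f, f/x^2\rangle$ by parts (the boundary terms vanish outright, using $f'(1)=0$ and $f=\mathcal{O}(x^{\nu+1/2})$) produces $\|f'/x\|^2 - 2\langle f'/x,f/x^2\rangle + (\nu^2-\tfrac14)\|f/x^2\|^2$, and the constant $\nu^2-1$ then emerges from a quadratic estimate in the ratio $Y=\|f'/x\|/\|f/x^2\|$ combined with the plain Hardy bound $\|f/x^2\|\le \tfrac23\|f'/x\|$ (proved via Minkowski's integral inequality). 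You instead perform the ground-state substitution $\psi=x^{1/2}f$, which encodes $\nu^2-1$ from the start through $\mathbb{L}^{\nu}x^{-1/2}=(\nu^2-1)x^{-5/2}$, and you must then show the remainder $\int_0^1 (x^{-1}\psi')'\psi x^{-2}\,dx$ is nonpositive; here the boundary terms at $x=1$ do \emph{not} vanish, and you correctly compensate the $+\tfrac32 f(1)^2$ contribution by a Hardy inequality carrying the boundary correction $2\psi(1)^2$ (your derivation of $2A+\tfrac12\psi(1)^2\le\sqrt{AB}$ and hence $B\ge 4A+2\psi(1)^2$ checks out, the degenerate case $A=0$ being trivial). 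Both proofs share the same skeleton — establish the bound on $\spann\{\psi_n^{\nu,1/2}:n\ge1\}$ using $(\psi_n^{\nu,1/2})'(1)=0$ and the decay at $0^+$, then extend by Fatou along an a.e.\ convergent subsequence of partial sums — and neither is more general; the paper's route avoids any boundary bookkeeping at $x=1$, while yours makes the role of the ground state $x^{-1/2}$ and the origin of the constant more transparent. Your closing remark is apt: without the boundary-corrected Hardy inequality your version would not close, which is precisely the complication the paper's arrangement sidesteps.
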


\begin{proof}
We first show the inequality in \eqref{Rell} assuming that $f \in \spann\{\psi_n^{\nu,1/2}: n\ge 1\}$.
The method we use is an adaptation of a reasoning that appears to be known, at least as a folklore, and in the version
presented below was communicated to us by G.\ Metafune \cite{Met}.
Observe that since $\nu > 1$ and $f$ is a finite linear combination of the $\psi_n^{\nu,1/2}$, it follows by \eqref{as1}
that $f/x^2 \in L^2(0,1)$. We also have $\widetilde{\mathbb{L}}^{\nu,1/2}f = \mathbb{L}^{\nu}f$.

Set $\zeta:= \nu^2-1/4 > 3/4$. Integrating by parts, we get
\begin{align*}
\big\langle \mathbb{L}^{\nu}f, f/x^2 \big\rangle & =
-\int_0^1 \frac{f''(x)\overline{f(x)}}{x^2} \, dx + \zeta \int_0^1 \frac{|f(x)|^2}{x^4}\, dx \\
& = - \frac{f'(x)\overline{f(x)}}{x^2}\bigg|_0^1 + \int_0^1 \frac{|f'(x)|^2}{x^2}\, dx - 2\int_0^1 \frac{f'(x)\overline{f(x)}}{x^3}\, dx
	+ \zeta\int_0^1 \frac{|f(x)|^2}{x^4}\, dx.
\end{align*}
Here the integrated term vanishes, since $\psi_n^{\nu,1/2}(x) = \mathcal{O}(x^{\nu+1/2})$,
$\frac{d}{dx}\psi_n^{\nu,1/2}(x) = \mathcal{O}(x^{\nu-1/2})$, $x \to 0^+$, and $\frac{d}{dx}\psi_n^{\nu,1/2}(x)\big|_{x=1} =0$;
all this can easily be verified with the aid of \eqref{as1}, \eqref{id3} and \eqref{id6}.
Therefore, we obtain
\begin{equation} \label{eqd}
\big\langle \mathbb{L}^{\nu}f,f/x^2\big\rangle = \big\| f'/x\big\|^2_{L^2(0,1)}
	- 2\big\langle f'/x, f/x^2\big\rangle + \zeta \big\|f/x^2\big\|^2_{L^2(0,1)}.
\end{equation}
On one hand, by the Cauchy-Schwarz inequality we have
\begin{equation} \label{equ}
\big| \big\langle \mathbb{L}^{\nu}f,f/x^2 \big\rangle\big|
	\le \big\| \mathbb{L}^{\nu}f \big\|_{L^2(0,1)} \big\|f/x^2\big\|_{L^2(0,1)}.
\end{equation}
On the other hand, we now bound the right-hand side of \eqref{eqd} from below.

We begin by noting that, again by the Cauchy-Schwarz inequality,
\begin{equation} \label{eqCS}
\big| \big\langle  f'/x, f/x^2 \big\rangle\big| \le \big\|f'/x\big\|_{L^2(0,1)} \big\|f/x^2\big\|_{L^2(0,1)}.
\end{equation}
Next, we claim the Hardy-type inequality
\begin{equation} \label{hardy}
\big\|f/x^2\big\|_{L^2(0,1)} \le \frac{2}3 \big\| f'/x\big\|_{L^2(0,1)}.
\end{equation}
To see this, write $f(x)/x = \int_0^1 f'(sx)\,ds$, take absolute squares of both sides and integrate with respect
to $x^{-2}dx$ over $(0,1)$. The result will be
$$
\big\| f/x^2 \big\|_{L^2(0,1)}^2 = \int_0^1 \bigg| \int_0^1 f'(sx)\, ds\bigg|^2 \, \frac{dx}{x^2}.
$$
Then apply Minkowski's integral inequality to obtain
$$
\big\|f/x^2\big\|_{L^2(0,1)} \le \int_0^1 \bigg( \int_0^1 \big|f'(sx)\big|^2 \, \frac{dx}{x^2} \bigg)^{1/2}\, ds
 \le \int_0^1 \big\| f'/x \big\|_{L^2(0,1)} \sqrt{s}\, ds = \frac{2}3 \big\|f'/x\big\|_{L^2(0,1)}
$$
and the claim follows.

Now, let $Y := \|f'/x\|_{L^2(0,1)}/ \|f/x^2\|_{L^2(0,1)}$. By \eqref{hardy} we have $Y \ge 3/2$ and we can use \eqref{eqCS} to write
\begin{align*}
& \big\|f'/x\big\|_{L^2(0,1)}^2 - 2\big\langle  f'/x, f/x^2 \big\rangle + \zeta \big\|f/x^2\big\|_{L^2(0,1)}^2 \\
& \qquad \ge \big\|f'/x\big\|_{L^2(0,1)}^2 - 2\big\|f'/x\big\|_{L^2(0,1)}\big\|f/x^2\big\|_{L^2(0,1)}
	+ \zeta\big\|f/x^2\big\|_{L^2(0,1)}^2 \\
& \qquad = \big[ (Y-3/2)(Y-1/2) + \zeta -3/4 \big] \, \big\| f/x^2\big\|_{L^2(0,1)}^2 \ge (\zeta-3/4)\big\|f/x^2\big\|_{L^2(0,1)}^2.
\end{align*}
Combining the above with \eqref{eqd} and the upper bound \eqref{equ}, we conclude the estimate in \eqref{Rell}
for $f \in \spann\{\psi_n^{\nu,1/2} : n \ge 1\}$.

We now pass to the general case when $f \in \domain\widetilde{\mathbb{L}}^{\nu,1/2}$. Let
$$
f_N = \sum_{n=1}^N \big\langle f, \psi_n^{\nu,1/2} \big\rangle \psi_n^{\nu,1/2}, \qquad N \ge 1,
$$
be the partial sums of the Fourier-Dini expansion of $f$; clearly, $f_N \in \spann\{\psi_n^{\nu,1/2} : n \ge 1\}$.
Then $f_N \to f$ and $\widetilde{\mathbb{L}}^{\nu,1/2}f_N \to \widetilde{\mathbb{L}}^{\nu,1/2}f$ in $L^2(0,1)$, as $N \to \infty$.
Moreover, we can choose a subsequence $\{f_{N_k}\}$ of $\{f_N\}$ for which the convergence is pointwise a.e.
Using Fatou's lemma together with the already proved bound \eqref{Rell} for $f_{N_k}$ we infer that
\begin{align*}
\big\|f/x^2\big\|_{L^2(0,1)}^2 & = \int_0^1 \liminf_{k} \frac{|f_{N_k}(x)|^2}{x^4}\, dx
\le \liminf_{k} \big\| f_{N_k}/x^2\big\|_{L^2(0,1)}^2 \\
& \le \frac{1}{(\nu^2-1)^2} \liminf_{k} \big\|\widetilde{\mathbb{L}}^{\nu,1/2}f_{N_k}\big\|_{L^2(0,1)}^2
= \frac{1}{(\nu^2-1)^2} \big\| \widetilde{\mathbb{L}}^{\nu,1/2}f\big\|_{L^2(0,1)}^2.
\end{align*}
This implies \eqref{Rell} and finishes the proof.
\end{proof}

\begin{remark}
The multiplicative constant in \eqref{Rell} is optimal, see \cite[Proposition 3.10]{MSS}.
\end{remark}

\section*{Acknowledgments}
We are grateful to Giorgio Metafune for showing us the method used in the proof of Lemma \ref{lem:x2}.
We also thank the referees for a constructive criticism that enriched the presentation and led us to
certain corrections.

%%%%%%%%%%%%%%%%%%%%%%%%%%%%%%%%%%%%%%%%%%%%%%%%%%%%%%%%%%%%%%%%%%%%%%%%%%%%%%%%%%%%%%%%%%%%%%%%%%%%%%%%%%%%%%%%%%%%%%%%
%%%%%%%%%%%%%%%%%%%%%%%%%%%%%%%%%%%%%%%%%%%%%%%%%%%%%%%%%%%%%%%%%%%%%%%%%%%%%%%%%%%%%%%%%%%%%%%%%%%%%%%%%%%%%%%%%%%%%%%%
%%%%%%%%%%%%%%%%%%%%%%%%%%%%%%%%%%%%%%%%%%%%%%%%%%%%%%%%%%%%%%%%%%%%%%%%%%%%%%%%%%%%%%%%%%%%%%%%%%%%%%%%%%%%%%%%%%%%%%%%

\end{document}